\newtheorem{theorem}{Theorem}[section]
\newtheorem*{theorem*}{theorem}
\newtheorem{proposition}[theorem]{Proposition}
\newtheorem{lemma}[theorem]{Lemma}
\newtheorem{corollary}[theorem]{Corollary}
\newtheorem{remark}[theorem]{Remark}
\def\R{{\mathbb R}}
\def\Z{{\mathbb Z}}
\def\C{{\mathbb C}}
\def\N{{\mathbb N}}
\def\S{{\mathbb S}}
\def\Z{{\mathbb Z}}
\def\T{{\mathbb T}}
\def\pP{{\mathbb P}}
\def\cB{{\mathcal B}}
\def \eps {\varepsilon}
\begin{document}

\title[Ratio limits and pressure function]
{Ratio limits and pressure function for group extensions of Gibbs Markov maps}
\date{\today}

\author{Jaime Gomez}
\address{Mathematisch Instituut,
University of Leiden, Einsteinweg 55,
2333 CC,  Leiden, Netherlands.}
\email{j.a.gomez.ortiz@math.leidenuniv.nl}

\author{Dalia Terhesiu}
\address{Mathematisch Instituut,
University of Leiden, Einsteinweg 55,
2333 CC,  Leiden, Netherlands.}
\email{daliaterhesiu@gmail.com}

\begin{abstract}
   Ratio limit theorems for random walks on (various) groups are known.  We obtain  a generalization of this type of ratio limit for deterministic walks on certain groups driven by Gibbs Markov  maps. In terms of proofs, the main difficulty comes down to the absence of a convolution structure. 
   Also, for (finitely generated) group extensions of Gibbs Markov maps we 
   obtain a characterization of the pressure function without a symmetry assumption.
   
\end{abstract}
\maketitle

\section{Introduction}
 We are interested in an analogue  of ratio local limit theorems in the sense of Stone~\cite{St67}, Avez~\cite{Av}, Gerl~\cite{Gerl78} (see also the recent note of Woess~\cite{Woess22})
 for random walks on groups in 
 a dynamical systems set up. Namely, we are interested in ratio limits for walks on groups driven by Gibbs Markov (GM) maps.

We recall the ratio limit in~\cite{St67, Av,Gerl78} for random walks on groups as relevant to the generalization we are after. 
Let $G$ be a group and let $X_1,\ldots, X_n$ be independent random variables on some probability space 
$(\Omega,\pP)$ taking values in $G$ distributed according to a probability measure $\nu$ on $G$. That is, given a set $A$ in $G$, $\pP(X_i\in A)=\nu(A)$.
Write $S_n=X_1\cdot\ldots \cdot X_n$, where $\cdot $ is the law of the group.

We first recall the result in~\cite{St67}. Assume that $G$ is also locally compact abelian (LCA), second countable and generated by a compact neighborhood of the identity. Let $m_G$ be the Haar measure on $G$. Under the assumption that $\nu$ is aperiodic and centered, the limit of quotients of convolutions satisfies
\begin{align}\label{eq:abiid}
\lim_{n\to\infty}\frac{\nu^{(n)}(g_0+A)}{\nu^{(n)}(g_1+A)} =\lim_{n\to\infty}
\frac{\pP(\{\omega: S_n(\omega)\in g_0+A\})}{\pP(\{\omega: S_n(\omega)\in g_1+A\})}= \frac{m_G(A+g_0)}{m_G(A+g_1)}=1,  
\end{align}
for any open and bounded set $A$ in $G$ with $m_G(A)>0$ and with
$m_G(\partial A)=0$, uniformly for $g_0,g_1$ in compact sets of $G$.
In terms of different sets in $G$, it is shown in~\cite{St67} that given  open and bounded sets $E,A$ in $G$ with $m_G(E), m_G(A)>0$ and with
$m_G(\partial E)=m_G(\partial A)=0$, $\lim_{n\to\infty}
\frac{\pP(\{\omega: S_n(\omega)\in E\})}{\pP(\{\omega: S_n(\omega)\in A\})}= \frac{m_G(E)}{m_G(A)}$.

In the deterministic set up of dynamical systems we cannot speak of  a notion similar to the convolution $\nu^{(n)}$ in \eqref{eq:abiid}. Nonetheless, as recalled below and 
as customary in the literature of dynamical systems (in particular, GM maps) we have a natural analogue of $\pP(\{\omega: S_n(\omega)\in A\})$.

For an amenable finitely generated group $G$, under a further symmetry assumption on the measure $\nu$, it is shown in~\cite{Av} that
$\lim_{n\to\infty}
\frac{\pP(\{\omega: S_n(\omega)=g\})}{\pP(\{\omega: S_n(\omega)=e\})}=1$ for all $g\in G$
and for $e$ the identity of $G$. A few years later,
 still in the set up of countable discrete groups,~\cite{Gerl78} obtained the precise limit of $\lim_{n\to\infty}
\frac{\pP(\{\omega: S_n(\omega)=g\})}{\pP(\{\omega: S_n(\omega)=e\})}$ without any symmetry condition (for not necessarily amenable groups).
We also refer to the recent note \cite{DoSh24} for a generalization of Avez's result in the non-symmetric setting.

We consider  group extensions of mixing GM maps $T:X\to X$.
The groups we shall consider will be  either discrete countable groups (which we refer to as discrete) or metrizable locally compact uncountable topological groups (which we refer to continuous).
Let $T_\psi:X\times G\to X\times G$ be the group extension of $T$ by the cocycle $\psi:X\to G$ defined via
\begin{align*}
 T_\psi(x, g)=(T x, \psi(x) g).
\end{align*}
This map  preserves the measure $\hat\mu=\mu\otimes m_G$, where  $m_G$ denotes the (left) Haar measure on  $G$. For $n\in\N$ and $x\in X$, the analogue of the i.i.d. product $S_n$ mentioned before is
\begin{align}\label{eq:defpsin}
    \psi_n(x)=\psi(T^{n-1}(x))\cdot\ldots \cdot\psi(T(x))\psi(x),
\end{align}
where $\cdot$ stands for the law of the group. As customary in the literature,
see Aaronson and Denker~\cite{AD01, AD02}, the assumption of aperiodicity and symmetry of the measure $\nu$ (in fact, any assumption on $\nu$) in the i.i.d. set up
are phrased in terms of the cocycle $\psi$.

If $G$ is continuous, $g\in G$ and $E\subseteq G$ is a `suitable' bounded set, 
we are interested in the limit
\begin{align}\label{c}
 \lim_{n\to\infty} \dfrac{\mu(\{x\in X: \psi_{n+1}(x)\in Eg \})}{\mu(\{x\in X: \psi_n(x) \in E g\})}.
\end{align}

If $G$ is discrete and $g\in G$,
we are interested in the limit
\begin{align}\label{d}
 \lim_{n\to\infty} \dfrac{\mu(\{x\in X: \psi_{n+1}(x)=g \})}{\mu(\{x\in X: \psi_n(x)=g \})}.
\end{align}

Under certain abstract assumptions listed in Section~\ref{sec:absres}, we obtain that the limit in~\eqref{c} and~\eqref{d} is exactly $1$: see Theorem~\ref{thm:amen}
in Section~\ref{sec:absres}. The proof of this result is provided in Section~\ref{sec:pf}. 
Under the same abstract assumptions, the type of arguments used in understanding the limits in~\eqref{c} (or~\eqref{d}), will allow us to also obtain an analogous form of~\eqref{eq:abiid} (or its discrete version) in the current deterministic dynamical set up. This is the content of Theorem~\ref{thm:dc}   and its proof is provided in Section~\ref{sec:dc}.\\

One feature of the abstract assumptions (listed in 
Section~\ref{sec:absres}) 
is that they are checkable.
In Section~\ref{sec:verif} we verify them  for certain groups: finite groups and certain LCA groups
(finitely generated or compactly generated).
The main tool for the verification of the abstract assumptions for the  considered LCA  is harmonic analysis and as such, transfer operators along with their perturbed version (by the cocycle $\psi$).

Although in this work, we resume to checking the abstract assumptions  listed in Section~\ref{sec:absres}  for abelian and for finite groups, we believe that these assumptions can be checked for much larger classes of groups, in particular non abelian and non compact groups. This requires a heavy use of unitary representation theory for non abelian groups as in~\cite{BaHa19, Fol} and we postpone this to a future work.
\\

In  Section~\ref{sec:ap} of the current work we combine the strategy of Stadlbauer~\cite{Ma13} and the non-symmetric Kesten criterion of Dougall and Sharp~\cite{DoSh24} to obtain an expression for the pressure function of amenable, finitely generated group extensions of GM maps.
 Under a certain symmetry condition on the cocycle $\psi$,~\cite[Theorem 4.1]{Ma13} shows that the pressure function of $T_\psi$ is equal to the pressure function of $T$. This result holds for general countable amenable groups and is obtained (among other arguments) via the Kesten criteria for symmetric random walks. 
 Without any symmetry assumption, ~\cite[Theorem 1.1]{DoSh24} gives a non-symmetric Kesten criteria for amenable finitely generated groups, which  relates the spectral radius of the non-symmetric random walk to the spectral radius of the random walk on the abelianized group.
 Combining these two results we obtain an expression for the pressure function for $T_\psi$, where $T$ is a GM map; in particular, we relate the pressure function of $T_\psi$, defined on the finitely generated group $G$, to the pressure function of the  extension of the abelianization of $G$.  
 We recall that this type of result was first obtained by Dougall and Sharp in~\cite{DoSh21} for \emph{general} group extensions of \emph{shifts of finite type} (and more generally for group extensions of Anosov flows).

\section{Statement of the main results}\label{sec:absres}

We start with a brief recall of the background of Gibbs-Markov (GM) maps.

\subsection{GM maps}\label{subsec:GM}
Roughly speaking, GM maps are infinite branch uniformly expanding maps with bounded distortion and big images. 
Throughout we assume that $(X,T,\mu,\alpha)$ is an almost onto
GM, $T$ is $\mu$-mixing and with full branches.

We recall the definitions we shall use in the sequel in more detail.
Let $(X,\mu)$ be a probability space, and let $T:X\to X$ be a topologically mixing ergodic measure-preserving transformation, piecewise continuous w.r.t. a countable partition $\alpha=\{a\}$.
For each $n\in\mathbb{N}$, we refer to $\alpha_n$ as the refined partition of the $n$-th iterate of $T$.
Define $s(x,x')$ to be the least integer $n\ge0$ such that $T^nx$ and $T^nx'$ lie in distinct partition elements.
Since $T$ is expanding, $s(x,x')=\infty$ if and only if $x=x'$, one obtains that $d_\beta(x,x')=\beta^{s(x,x')}$
for $\beta\in(0,1)$ is a metric.

Take the potential $\varphi=\log\frac{d\mu}{d\mu\circ T}:X\to\R$.
We say that $T$ is a GM map if the following hold w.r.t. the countable partition $\alpha$:
\begin{itemize}

\parskip = -2pt
\item For each $a\in\alpha$, $T (a)$ is a union of partition elements and $T|_a:a\to T(a)$ is a measurable bijection for each $j\ge1$
such that $T(a)$ is the union of elements of the partition $\bmod \mu$;
\item $\inf_{a\in\alpha}\mu(T(a))>0$. This is usually referred to as the big image property (BIP).
\item
There are constants $C>0$, $\beta\in(0,1)$ such that
$|\varphi(x)-\varphi(x')|\le Cd_\beta(x,x')$ for all $x,x'\in a$ and $a\in\alpha$, $j\ge1$.
\end{itemize} 
 The map $T$ has full branches if $T(a)=X$ for each $a\in\alpha$.
The invariant measure $\mu$ is known
to have the Gibbs property. That is,  there exists $C>0$ such that for every $n\in\mathbb{N}$ and for all $a\in \alpha_n$, $x\in a$,
\begin{align}\label{Eq: Gibbs prop}
    C^{-1}\mu(a)\le e^{\varphi_n(x)}\le C\mu(a).
\end{align}
See, for instance,~\cite[Chapter 4]{Aaronson} and~\cite{AD01}
for a more detailed exposition on Gibbs-Markov maps.  

\subsection{Main assumptions and statement of the abstract result}
The standing assumption for the main abstract result are equation~\eqref{eq:Cond-psi}
or~\eqref{eq:Cond-psic} below,
which we verify for certain groups in Section~\ref{sec:verif}.

Throughout, we will use the following notation. For every $d, A, B>0$, we denote
$A \sim_{d} B
\text{ when }
\frac{B}{d} \leq A \leq d B$.
Across this text, $\psi_n$ is as defined in~\eqref{eq:defpsin} and $e\in G$ will be the identity element.

We assume one of the following, depending on whether $G$ is discrete or continuous.

\begin{description}[style=multiline,labelwidth=3cm,align=parright, leftmargin=0pt, itemindent=25pt]
\item[(D)]\makeatletter\def\@currentlabel{D}\makeatother\label{eq:Cond-psi}
If $G$ discrete,
given $g \in G$,
we assume that $\forall \eps > 0\, \forall n_0 \in \N\, \exists n_1 > n_0 \, \forall n \geq n_1 \, \exists n' \in \{n_0, \dots, n_1\} \forall b \in \alpha_{n'}$:
\begin{equation*}
 \frac{ \mu(b \cap \{ x \in X : \psi_n(x)= g \} )}{\mu(b)} \sim_{1+\eps} \mu( \{ x \in X : \psi_n(x)=g \} ).
\end{equation*}
\item[(C)]\makeatletter\def\@currentlabel{C}\makeatother\label{eq:Cond-psic}
If $G$ is continuous,
let $E$ be a bounded set containing $e$ such that $m_G(E) = m_G(E \setminus \partial E) > 0$.
Given $g \in G$, we assume that
$\forall \eps > 0\, \forall n_0 \in \N\, \exists n_1 > n_0 \, \forall n \geq n_1 \, \exists n' \in \{n_0, \dots, n_1\} \forall b \in \alpha_{n'}$:
\begin{equation*}
 \frac{ \mu(b \cap \{ x \in X : \psi_n(x)\in E g \} )}{\mu(b)} \sim_{1+\eps} \mu( \{ x \in X : \psi_n(x)\in E g\} ).
\end{equation*}
\end{description}
In Section~\ref{sec:verif}, we verify~\eqref{eq:Cond-psi} and~\eqref{eq:Cond-psic} (for finite and certain abelian groups) under the symmetry assumption phrased below as \eqref{eq:S}.
This type of symmetry assumption is crucial in establishing that, when $G$ is continuous, $\mu( \{ x \in X : \psi_n(x)\in E g\} )>0$ for $n$ sufficiently large: see Lemma~\ref{lemma:psitive mu(e)} below.
The symmetry condition on $\psi$ that we require is
\begin{description}[style=multiline,labelwidth=3cm,align=parright, leftmargin=0pt, itemindent=23pt]
 \item[(S)]\makeatletter\def\@currentlabel{S}\makeatother\label{eq:S}
There exists an invertible probability   measure preserving transformation $S:X\to X$ so that $S$ and $S^{-1}$ are  continuous and 
\begin{align*}
 S\circ T=T\circ S\text{ and }(\psi\circ S)(x)=\psi(x)^{-1}, \text{ for }x\in X.
\end{align*}
\end{description}
Note that this symmetry assumption~\eqref{eq:S} implies, in particular, that 
$\mu(\{x\in X:\psi(x)\in E\})=\mu(\{x\in X:\psi(x)^{-1}\in E\})$, for $E\subseteq  G$, which is the usual assumption in an independent set up. In Section~\ref{sec:verif}, we will use a slightly weaker symmetry assumption: see text around equation~\eqref{symmetryab} as used in~\cite{AD02}.

If $G$ is discrete, we assume that the cocycle $\psi:X\to G$ is constant on each $a\in\alpha$.  That is, for every $a\in \alpha$, $\psi(x)=\psi(y)$ for each $x,y\in a$. In the discrete case, this assumption on $\psi$ will be used
throughout the entire work.

When $G$ is continuous we assume that $\psi$ is continuous.
 In Section~\ref{sec:pf} we will  use the continuity of $\psi$ in the proofs of Lemmas~\ref{lemma:psitive mu(e)} and~\ref{lem:hd} below.
(In Section~\ref{sec:verif}, we will use a stronger form of continuity, namely H\"older on the partition elements $a\in\alpha$).\\

When $G$ is continuous we need a further assumption.
\begin{description}[style=multiline,labelwidth=3cm,align=parright, leftmargin=0pt, itemindent=35pt]
 \item[(CM)]\makeatletter\def\@currentlabel{CM}\makeatother\label{eq:contmix}
 Let $E,A$ be bounded sets in $G$ with $m_G(\partial E)= m_G(\partial A)=0$.
 
Let $F$ be a bounded set in $G$ with $m_G(F) \in (0,\infty)$ and fix $k \in \N$.
We assume that for any $a\in\alpha_k$ and for all $n$ large 
\begin{align*}
\mu\otimes m_G \left((a \times F)\cap T_\psi^{-n}(X\times A)\right)
 \le \mu(a) m_G(F) \frac{m_G(A)}{m_G(E)}\mu( \{ x \in X : \psi_n(x)\in E g\} ).
\end{align*}
\end{description}

In Section~\ref{sec:verif} we verify~\eqref{eq:Cond-psic} and obtain a much stronger form of~\eqref{eq:contmix} for compactly generated LCA groups (see also Proposition~\ref{prop:comp} below). But in the proof of the abstract results below we only need  the current form of~\eqref{eq:contmix}.\\

With all the assumptions specified, we state the abstract results.

\begin{theorem}\label{thm:amen}
Let $T:X\to X$ be a GM map. Let $G$ be a topological group and assume that $e\in \psi(X)$.
\begin{itemize}
 \item[(a)] 
If $G$ is discrete, suppose that~\eqref{eq:Cond-psi} holds.
Then for each $g\in\cup_{k\in\N}\psi_k(X)$,
\begin{align*}
\lim_{n\to\infty}    \dfrac{\mu(\{x\in X: \psi_{n+1}(x)=g \})}{\mu(\{x\in X: \psi_n(x)=g \})}=1.
\end{align*}

\item[(b)] 
If $G$ is continuous, we assume that~\eqref{eq:S},~\eqref{eq:Cond-psic} and~\eqref{eq:contmix} hold.
Then
\begin{align*}
\lim_{n\to\infty}    \dfrac{\mu(\{x\in X: \psi_{n+1}(x)\in Eg \})}{\mu(\{x\in X: \psi_n(x) \in Eg \})}=1,
\end{align*}
for any $g\in \cup_{k\in\mathbb{N}}\psi_k(X)$, and any open bounded set $E\subset G$ with $e \in E$ and $m_G(E) = m_G(E \setminus \partial E)>0$.
\end{itemize}
\end{theorem}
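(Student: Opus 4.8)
The plan is to treat the discrete case (a) first, since it isolates the mechanism, and then to run the same scheme in the continuous case (b), with the extra hypotheses absorbing the loss of a distinguished symbol on which $\psi\equiv e$. Write $a_n=\mu(\{\psi_n=g\})$. I would first record positivity: since $e\in\psi(X)$ and $\psi$ is constant on each $a\in\alpha$, there is $a_0\in\alpha$ with $\psi|_{a_0}=e$, and since $g\in\psi_K(X)$ for some $K$ there is a cylinder in $\alpha_K$ on which $\psi_K\equiv g$. Concatenating this cylinder with iterates of $a_0$ and using full branches produces, for every $n\ge K$, a positive-measure cylinder inside $\{\psi_n=g\}$, so $a_n>0$ and the ratios are well defined.

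The core is a two-sided comparison of $a_{n+1}$ with $a_n$ in which the Gibbs distortion constant cancels. Using the cocycle identity $\psi_{n+1}(x)=\psi_n(Tx)\psi(x)$ and $\psi|_{a_0}=e$, I have $a_0\cap\{\psi_{n+1}=g\}=a_0\cap T^{-1}\{\psi_n=g\}$, hence by the full-branch change of variables $\mu(a_0\cap\{\psi_{n+1}=g\})=\int_{\{\psi_n=g\}}L\mathbf 1_{a_0}\,d\mu$, where $L$ is the transfer operator of $T$. The key point is to evaluate this integral not through the crude Gibbs bound~\eqref{Eq: Gibbs prop} (which only yields a fixed constant $C$) but through the equidistribution~\eqref{eq:Cond-psi}: refining over $\alpha_{n'}$ for the scale $n'$ that~\eqref{eq:Cond-psi} supplies at time $n$, the density $L\mathbf 1_{a_0}$ is nearly constant on each $b\in\alpha_{n'}$ (its oscillation is $O(\beta^{n'})$ by the H\"older control on $\varphi$), so $\int_{\{\psi_n=g\}}L\mathbf 1_{a_0}\,d\mu\sim_{(1+\eps)(1+\delta)}\mu(a_0)\,a_n$ with $\delta=\delta(n_0)=O(\beta^{n_0})$. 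On the other hand, applying~\eqref{eq:Cond-psi} at time $n+1$ and summing over the sub-cylinders contained in $a_0$ gives $\mu(a_0\cap\{\psi_{n+1}=g\})\sim_{1+\eps}\mu(a_0)\,a_{n+1}$. Combining the two estimates, the factor $\mu(a_0)$ cancels and I obtain $a_{n+1}\sim_{(1+\eps)^2(1+\delta)}a_n$ for all large $n$. Since $\eps$ is arbitrary and $\delta(n_0)\to0$ as $n_0\to\infty$, letting these parameters tend to their limits forces $a_{n+1}/a_n\to1$, proving (a).

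The main obstacle is exactly the cancellation just described: with no convolution available one cannot relate $a_{n+1}$ to $a_n$ directly, and any single change of variables injects the scale-independent Gibbs constant $C$, which would leave the ratio trapped in $[C^{-1},C]$ rather than converging to $1$. Assumption~\eqref{eq:Cond-psi} is what removes this constant, replacing the pointwise Gibbs bound by an averaged (equidistributed) one whose error tends to $1$; ensuring that the \emph{same} cylinder $a_0$ and density are used on both sides, so that $C$ genuinely cancels, is the delicate bookkeeping step, together with checking that the two invocations of~\eqref{eq:Cond-psi} (at times $n$ and $n+1$, at scales $\ge n_0$) can be arranged with a common, controllably small oscillation parameter $\delta$.

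For part (b) I would follow the identical scheme with three substitutions dictated by the continuity of $G$ and $\psi$. First, positivity of $A_n=\mu(\{\psi_n\in Eg\})$ for large $n$ is no longer elementary and is supplied by Lemma~\ref{lemma:psitive mu(e)}, which is where the symmetry assumption~\eqref{eq:S} enters. Second, the role of the distinguished symbol $a_0$ with $\psi\equiv e$ is played by a deep cylinder on which $\psi$ is uniformly close to $e$ (available by continuity of $\psi$ and Lemma~\ref{lem:hd}), so that $\psi_{n+1}(x)=\psi_n(Tx)\psi(x)$ moves $\{\psi_n\in Eg\}$ only into a slightly enlarged or shrunken translate of $E$; the requirement $m_G(E)=m_G(E\setminus\partial E)>0$ together with $m_G(\partial E)=0$ guarantees that these boundary perturbations cost only a factor tending to $1$. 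Third, the equidistribution input~\eqref{eq:Cond-psi} is replaced by~\eqref{eq:Cond-psic}, and the one remaining one-sided estimate that cannot be obtained purely from equidistribution---the upper bound on $\mu\otimes m_G\!\left((a\times F)\cap T_\psi^{-n}(X\times A)\right)$---is exactly assumption~\eqref{eq:contmix}. With these in hand the same cancellation of the distortion constant yields $A_{n+1}\sim_{(1+\eps)^2(1+\delta)}A_n$ and hence the ratio limit. The additional obstacle here, beyond the cancellation of $C$, is controlling the continuous cocycle increment $\psi(x)$ against the boundary of $E$ uniformly in $n$, which is the precise reason~\eqref{eq:contmix} is imposed.
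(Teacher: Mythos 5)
Your proposal is correct and follows essentially the same route as the paper: the paper formalizes your two-sided evaluation of $\mu(a_0\cap\{\psi_{n+1}=g\})$ via the normalized measures $m_n^g$ (resp.\ $m_n^{Eg}$) of \eqref{eq:mn}--\eqref{eq:mnc}, proving on one hand $m_{n+1}^g(a_0)\to\mu(a_0)$ from \eqref{eq:Cond-psi} (Lemma~\ref{lem:m_nu}) and on the other $m_{n+1}^g(a_0)\approx\frac{\mu^n(g)}{\mu^{n+1}(g)}\mu(a_0)$ via the transfer-operator change of variables with the distortion constant cancelling (Lemma~\ref{firstnallemma61ds}), which is exactly your cancellation mechanism. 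In the continuous case your three substitutions (positivity via Lemma~\ref{lemma:psitive mu(e)} using \eqref{eq:S}, a deep cylinder around a point with $\psi(x_0)=e$, and the boundary control of Lemma~\ref{lem:hd} resting on \eqref{eq:contmix}) are precisely the ones the paper makes.
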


\begin{theorem}\label{thm:dc}
Assume the set up of Theorem~\ref{thm:amen}.
\begin{itemize}
 \item[(a)] 
If $G$ is discrete, for $g\in\cup_{k\in\N}\psi_k(X)$,
\begin{align*}
\lim_{n\to\infty}    \dfrac{\mu(\{x\in X: \psi_{n}(x)=g \})}{\mu(\{x\in X: \psi_n(x)=e \})}=1.
\end{align*}

\item[(b)] 
If $G$ is continuous, let $E$ be a bounded set in $G$ with $m_G(E) = m_G(E \setminus \partial E)>0$. Then uniformly for $g, g_1\in\cup_{k\in\N}\psi_k(X)$,
\begin{align*}
\lim_{n\to\infty}    \dfrac{\mu(\{x\in X: \psi_{n}(x)\in Eg \})}{\mu(\{x\in X: \psi_n(x) \in Eg g_1^{-1}\})}=1.
\end{align*}
\end{itemize}
\end{theorem}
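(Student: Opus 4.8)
The plan is to derive both statements from Theorem~\ref{thm:amen} by a \emph{word-insertion} argument that converts a fixed-time comparison of two group elements into a comparison between consecutive times. I will describe the discrete case~(a) in full and then indicate the modifications for~(b).

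For part~(a), since $g\in\cup_k\psi_k(X)$ and $\psi$ is constant on each element of $\alpha$, I first fix $K$ and a cylinder $b_0\in\alpha_K$ on which $\psi_K\equiv g$, with $T^K\colon b_0\to X$ a measurable bijection (full branches). Let $\Phi=(T^K|_{b_0})^{-1}$ be the associated inverse branch. Splitting $\psi_{N+K}$ after its last $K$ factors gives $\psi_{N+K}(\Phi(y))=\psi_N(y)\,g$, so that $\psi_N(y)=e$ iff $\psi_{N+K}(\Phi(y))=g$; equivalently $b_0\cap\{\psi_{N+K}=g\}=\Phi(\{\psi_N=e\})=b_0\cap T^{-K}\{\psi_N=e\}$. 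Changing variables through $\Phi$ with Jacobian $e^{\varphi_K\circ\Phi}$, which by~\eqref{Eq: Gibbs prop} satisfies $e^{\varphi_K(\Phi(y))}\sim_{C}\mu(b_0)$ and, by bounded distortion, is constant up to a factor $1+O(\beta^{m})$ on every $\alpha_m$-cylinder, yields
\[
 \frac{\mu(b_0\cap\{\psi_{N+K}=g\})}{\mu(b_0)}=\frac{\int_{\{\psi_N=e\}}e^{\varphi_K\circ\Phi}\,d\mu}{\int_X e^{\varphi_K\circ\Phi}\,d\mu}.
\]

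The core of the argument is to evaluate this sharply using~\eqref{eq:Cond-psi} at a good fine scale. Decomposing both integrals over the cylinders $c\in\alpha_{n'}$ at a good scale $n'$ for $e$ (taking $n_0\ge K$, so $n'\ge K$), the near-constancy of $e^{\varphi_K\circ\Phi}$ on each $c$ together with the equidistribution $\mu(c\cap\{\psi_N=e\})\sim_{1+\eps}\mu(c)\,\mu(\psi_N=e)$ shows the weighted average on the right equals $\mu(\psi_N=e)\,(1+o(1))$; hence $\mu(b_0\cap\{\psi_{N+K}=g\})/\mu(b_0)=\mu(\psi_N=e)\,(1+o(1))$. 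On the other hand, applying~\eqref{eq:Cond-psi} \emph{for $g$} at a good scale $n''\ge K$ and summing the estimates over the sub-cylinders $d\in\alpha_{n''}$ with $d\subseteq b_0$ (coarse-graining back to $b_0$) gives $\mu(b_0\cap\{\psi_{N+K}=g\})/\mu(b_0)\sim_{1+\eps}\mu(\psi_{N+K}=g)$. Combining the two and letting $\eps\to0$, $N\to\infty$ yields $\mu(\psi_{N+K}=g)=\mu(\psi_N=e)\,(1+o(1))$; iterating Theorem~\ref{thm:amen}(a) $K$ times gives $\mu(\psi_{N+K}=g)=\mu(\psi_N=g)\,(1+o(1))$, and comparing proves $\mu(\psi_N=g)/\mu(\psi_N=e)\to1$. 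For part~(b) the scheme is identical, with points replaced by the cosets $Eg$ and~\eqref{eq:Cond-psi} replaced by~\eqref{eq:Cond-psic}: with $g_1\in\psi_K(X)$ I insert, through an inverse branch over a cylinder on which $\psi_K$ is close to $g_1$, a word realizing (approximate) right multiplication by $g_1$, mapping $\{\psi_N\in Egg_1^{-1}\}$ into $\{\psi_{N+K}\in Eg\}$ up to a boundary layer; I then pass between conditional and unconditional measures via~\eqref{eq:Cond-psic} and remove the time shift using Theorem~\ref{thm:amen}(b). Positivity of the denominators for large $N$ is supplied by~\eqref{eq:S} through Lemma~\ref{lemma:psitive mu(e)}, and the uniformity in $g,g_1$ is carried along the estimates.

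The main obstacle differs between the two cases. In~(a) the only delicate point is that~\eqref{eq:Cond-psi} provides good scales for $e$ and for $g$ that need not coincide; this is circumvented, as above, by coarse-graining the fine-scale equidistribution down onto the \emph{fixed} cylinder $b_0$, so that no exact scale-matching is required. In~(b) the genuine difficulty is that $\psi$ is no longer locally constant, so the inserted word multiplies by $g_1$ only \emph{approximately}, producing a boundary layer governed by $(Eg)\,\triangle\,(Eg)\delta$ with $\delta$ near $e$. Showing that this layer is negligible relative to $\mu(\psi_N\in Eg)$, uniformly in $g$ and $g_1$, is the step I expect to be the most technical: it requires the continuity of $\psi$ (Lemma~\ref{lem:hd}), the hypothesis $m_G(\partial E)=0$, and the upper bound~\eqref{eq:contmix} to control the spreading of $\psi_N$ against Haar measure near the boundary of $E$.
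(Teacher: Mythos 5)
Your proposal is correct and follows essentially the same route as the paper: your ``word-insertion'' identity $\mu(b_0\cap\{\psi_{N+K}=g\})=\int_{\{\psi_N=e\}}e^{\varphi_K\circ\Phi}\,d\mu$, evaluated via bounded distortion and \eqref{eq:Cond-psi} at a good scale, is precisely the content of Lemma~\ref{firstnallemma61ds} (and of Lemma~\ref{firstnallemma61dsCont} with Lemma~\ref{lem:hd} in the continuous case), while your coarse-graining of \eqref{eq:Cond-psi} back onto $b_0$ reproduces Lemma~\ref{lem:m_nu}. Combining the two to get $\mu^{N+K}(g)\sim\mu^{N}(e)$ and removing the time shift with Theorem~\ref{thm:amen} is exactly how the paper concludes.
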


If $G$ is LCA, second countable, and compactly generated, and $\psi$ is symmetric in the sense of~\eqref{eq:S}, in the process of verifying~\eqref{eq:Cond-psic}, we  obtain directly an analogue of Stone's result for different sets: 
\begin{proposition}\label{prop:comp}
 Given that $E,A$ are bounded sets in $G$ with $m_G(\partial E)= m_G(\partial A)=0$,
\begin{equation*}
 \frac{ \mu( \{ x \in X : \psi_n(x)\in E  \} )}{\mu( \{ x \in X : \psi_n(x)\in A  \} )} \to \frac{m_G(E)}{m_G(A)},\quad \text{ as } n \to \infty.
\end{equation*}
\end{proposition}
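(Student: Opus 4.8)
The plan is to use the harmonic analysis of the dual group $\hat G$ together with the spectral theory of a family of twisted transfer operators, which is precisely the machinery the verification of~\eqref{eq:Cond-psic} in Section~\ref{sec:verif} produces. Write $L$ for the transfer operator of $T$ with respect to $\mu$, acting on the Lipschitz functions for $d_\beta$, and for each character $\chi\in\hat G$ set $L_\chi f = L((\chi\circ\psi)\,f)$, so that $L_\chi = L$ at the trivial character $\chi_0$. The adjoint relation for $L$ gives the ``characteristic function'' identity
\begin{equation*}
 \phi_n(\chi):=\int_X \chi(\psi_n(x))\,d\mu(x)=\int_X L_\chi^{\,n}\mathbf 1\,d\mu ,
\end{equation*}
and Fourier inversion on the LCA group $G$ (Pontryagin duality) then expresses the quantity of interest as
\begin{equation*}
 \mu(\{x\in X:\psi_n(x)\in E\})=\int_{\hat G}\widehat{\mathbf 1}_E(\chi)\,\phi_n(\chi)\,d\hat m(\chi),
 \qquad \widehat{\mathbf 1}_{E}(\chi_0)=m_G(E),
\end{equation*}
with the analogous formula holding for $A$.

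First I would establish the spectral picture. The GM and BIP properties give $L$ a spectral gap with leading simple eigenvalue $1$; since $\psi$ is H\"older on partition elements, $\chi\mapsto L_\chi$ is analytic near $\chi_0$, and analytic perturbation theory yields $L_\chi^{\,n}=\lambda(\chi)^n P_\chi+Q_\chi^{\,n}$ in a neighbourhood of $\chi_0$, with $\lambda(\chi_0)=1$ and $\int_X P_{\chi_0}\mathbf 1\,d\mu=1$. The symmetry assumption~\eqref{eq:S} forces $\phi_n$ to be real-valued (conjugation corresponds to $\psi_n\mapsto\psi_n\circ S$, which preserves $\mu$), hence $\lambda$ is real near $\chi_0$ and its first-order term vanishes, giving the centred expansion $\lambda(\chi)=1-\tfrac12 Q(\chi)+o(\cdot)$ with $Q$ a positive quadratic form on the continuous directions of $\hat G$ (using the structure theorem $G\cong\R^a\times\Z^b\times K$, $\hat G\cong\R^a\times\T^b\times\hat K$). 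Aperiodicity supplies $|\lambda(\chi)|<1$ off $\chi_0$ together with a uniform bound $\sup\rho(L_\chi)<1$ away from a neighbourhood of $\chi_0$.

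The core is the local-limit asymptotic. I would split $\int_{\hat G}$ into a shrinking neighbourhood of $\chi_0$ and its complement; on the complement the uniform spectral bound makes the contribution exponentially small, while near $\chi_0$ the substitution $\chi=\chi_{t/\sqrt n}$ concentrates $\lambda(\chi)^n\approx e^{-\frac12 Q(t)}$ at scale $n^{-1/2}$. Continuity of $\widehat{\mathbf 1}_E$ at $\chi_0$ (where the hypothesis $m_G(\partial E)=0$ enters, ensuring the indicator is regular enough that its transform is continuous at $\chi_0$) and of $\chi\mapsto\int_X P_\chi\mathbf 1\,d\mu$ let me pass to the limit by dominated convergence, yielding
\begin{equation*}
 \mu(\{x\in X:\psi_n(x)\in E\})=I_n\,\bigl(m_G(E)+o(1)\bigr),
\end{equation*}
where the prefactor $I_n\sim c\,n^{-d/2}$ with $d=a+b$ depends only on the dynamics and $Q$, and \emph{not} on $E$. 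The identical formula with $A$ replacing $E$ then gives, upon dividing and letting $n\to\infty$, the claimed limit $m_G(E)/m_G(A)$.

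The main obstacle I anticipate is controlling the contribution of the non-compact part $\R^a$ of the dual: unlike the purely discrete or compact settings, $\hat G$ is non-compact, $\widehat{\mathbf 1}_E$ need not lie in $L^1(\hat G)$, and one cannot integrate the spectral bound crudely over all of $\hat G$. I would resolve this by a smoothing argument, replacing $\mathbf 1_E$ by its convolution with an approximate identity whose transform is compactly supported and integrable, and squeezing $E$ between slightly larger and smaller sets. The condition $m_G(\partial E)=m_G(\partial A)=0$ is exactly what makes these inner and outer approximations have matching Haar measure in the limit, thereby closing the estimate.
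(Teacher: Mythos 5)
Your proposal follows essentially the same route as the paper: Fourier inversion on $\hat G$ with a mollified indicator $\Phi_\delta * \mathbf 1_E$, the spectral decomposition of the perturbed operators $L_\chi$ near the trivial character, aperiodicity to kill the complement, and symmetry to control the leading eigenvalue, concluding that $\mu(\{\psi_n\in E\})$ equals a set-independent prefactor times $m_G(E)(1+o(1))$ so that the ratio converges. The only cosmetic difference is that you invoke a Gaussian expansion $\lambda(\chi)=1-\tfrac12 Q(\chi)+o(\cdot)$ and the scale $n^{-1/2}$, whereas the paper deliberately avoids committing to a rate by using the Aaronson--Denker fact that $u_n(\eta)/u_n(\eta')\to 1$ together with an arbitrary scaling sequence $a_n\to\infty$.
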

The proof of Proposition~\ref{prop:comp} is provided in subsection~\ref{subsec:ab}.

\section{Proof of Theorem \ref{thm:amen} under \eqref{eq:Cond-psi} or \eqref{eq:Cond-psic} and \eqref{eq:contmix} }\label{sec:pf}
Recall that the elements in $\alpha_n$
are $n$-cylinders. 
For simplicity of notation, throughout this section we work with $\mu^n(g)$
and $\mu^n(Eg)$ as defined below.
When $G$ is discrete, for $g\in G$, we define
\begin{align*}
   \mu^n(g) := \mu(\{x\in X:\psi_n(x)=g\})=\sum_{a\in\alpha_n, \psi_n(a)=g} \mu(a).
\end{align*}
 If $G$ is a continuous group, for $g\in G$ and $E\subseteq G$ a bounded set that contains $e\in G$, we write
\begin{align*}
   \mu^n(Eg) := \mu(\{x\in X: \psi_n(x)\in E g\})=\sum_{a\in\alpha_n, \psi_n(a) \cap E g \neq \emptyset} \mu(a \cap \psi_n^{inv}(E g)),
\end{align*}
where $\psi_n^{inv}$ denotes the inverse image of the map $\psi_n:X \to G$
(as opposed to the inverse element $\psi(x)^{-1}$ of $\psi(x)\in G$).

 Equipped with this notation, we define the following sequence of measures
 on $X$.
For $g\in G$, let
\begin{align}\label{eq:mn}
  \int v \, dm_n^g :=\dfrac{1}{\mu^n(g)}\sum_{a\in\alpha_n, \psi_n(a)=g}\int_a  v \, d\mu, \quad \text{ if } G\text{ is discrete}.
\end{align}
Given $g\in G$ and $E \subset G$ a bounded set containing $e \in G$ such that
$m_G(E) = m_G(E \setminus \partial E)>0$,
\begin{align}\label{eq:mnc}
  \int v \, dm_n^{Eg} :=&\dfrac{1}{\mu^n(Eg)} \!\!  \sum_{\stackrel{a\in\alpha_n}{\psi_n(a)\cap Eg \neq \emptyset}}\!\!\!\!\!\!   \frac{\mu(a \cap \psi_n^{inv}(Eg))}{\mu(a)} \int_a  v \, d\mu, \quad \text{ if } G\text{ is continuous.}
\end{align}

To ensure that the sequence of measures just defined has a chance to behave well as $n\to\infty$, we will first check that  $\mu^n(g)>0$, or $\mu^n(Eg)>0$  as in Lemmas~\ref{lemma: positivemun} and~\ref{lemma:psitive mu(e)} below.

\begin{lemma}\label{lemma: positivemun}
    If $G$ is a discrete group and $e\in \psi(X)$, then $\mu^n(e)>0$ for every $n\in\mathbb{N}$.
    Moreover, if for $g\in G$ there exists $r_g\in\mathbb{N}$ such that $g\in \psi_{r_g}(X)$, then $\mu^n(g)>0$ for every $n\geq r_g$. 
\end{lemma}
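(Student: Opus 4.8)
The plan is to build, by hand, a single $n$-cylinder on which $\psi_n$ takes the prescribed value and then to read off positivity from the measure of that cylinder. Three structural facts drive everything. First, the Gibbs property~\eqref{Eq: Gibbs prop} gives $\mu(a)>0$ for every $a\in\alpha_n$ and every $n$, since $e^{\varphi_n(x)}>0$. Second, because $T$ has full branches, $T(a)=X$ for each $a\in\alpha$, so every finite word $(a^{(0)},\dots,a^{(n-1)})$ of partition elements is admissible: the set $b=\{x: T^ix\in a^{(i)},\ 0\le i\le n-1\}$ is a nonempty element of $\alpha_n$, hence of positive measure. Third, since $\psi$ is constant on each $a\in\alpha$, the map $\psi_n$ is constant on each $n$-cylinder, and on the cylinder $b$ above its value is the group product $\psi(a^{(n-1)})\cdots\psi(a^{(1)})\psi(a^{(0)})$, read off directly from~\eqref{eq:defpsin}.

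For the first statement, since $e\in\psi(X)=\psi_1(X)$ and $\psi$ is constant on partition elements, there is $a_*\in\alpha$ with $\psi(a_*)=e$. Applying the observations above with $a^{(i)}=a_*$ for all $i$, the $n$-cylinder $b_*=[a_*,\dots,a_*]$ satisfies $\psi_n\equiv e$ on $b_*$, so that $\mu^n(e)\ge\mu(b_*)>0$ for every $n\in\N$.

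For the second statement, fix $g$ with $g\in\psi_{r_g}(X)$. Then there is an $r_g$-cylinder $c=[a^{(0)},\dots,a^{(r_g-1)}]$ with $\psi_{r_g}\equiv g$ on $c$. For $n\ge r_g$ I would form the $n$-cylinder $b=[a^{(0)},\dots,a^{(r_g-1)},a_*,\dots,a_*]$, extending $c$ by $n-r_g$ copies of $a_*$, which is admissible by full branches. Using the cocycle decomposition $\psi_n(x)=\psi_{n-r_g}(T^{r_g}x)\,\psi_{r_g}(x)$, the first factor equals $\psi(a_*)\cdots\psi(a_*)=e$ on $b$ (the coordinates $r_g,\dots,n-1$ all being $a_*$; when $n=r_g$ it is the empty product $e$) while the second equals $g$; hence $\psi_n\equiv g$ on $b$ and $\mu^n(g)\ge\mu(b)>0$ for all $n\ge r_g$.

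The argument is essentially bookkeeping, so there is no deep obstacle; the only points requiring care are the correct left-to-right order of the group product in~\eqref{eq:defpsin}, so that the appended copies of $a_*$ contribute identities on the left and leave the value $g$ intact, and making explicit that full branches, rather than merely a Markov property, is what guarantees the concatenated word $[a^{(0)},\dots,a^{(r_g-1)},a_*,\dots,a_*]$ is a genuine nonempty cylinder. Positivity of the measure of that single cylinder then suffices, since $\mu^n(g)$ is a sum over cylinders that includes it.
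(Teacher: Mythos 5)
Your proof is correct and follows essentially the same route as the paper's: exhibit a single admissible $n$-cylinder (built by concatenating a $\psi$-identity block with the cylinder realizing $g$, using full branches) on which $\psi_n$ is constant equal to the prescribed value, and conclude from positivity of the measure of nonempty cylinders. The only cosmetic differences are that the paper locates the constant-$e$ cylinder via a fixed point and prepends rather than appends the identity block; your version is, if anything, slightly more explicit about the boundary case $n=r_g$.
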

\begin{proof}
Recall that $T$ has full branches.    Since $\psi$ is constant on partition elements, there is $a   \in \alpha$ and a fixed point $x \in a$ such that $\psi(x) = \psi(a) = e$.
Then for the $n$-cylinder $a_n \owns x$, we get $\psi_n(a_n) = e$, and because $\mu$ is fully supported, $\mu^n(e) \geq \mu(a_n) > 0$.
For the second part of this lemma, let $g\in G$ and $r_g\in \mathbb{N}$ as stated before. 
%From this we infer that there exists $x\in a_{r_g}$ with $a_{r_g}\in \alpha_{r_g}$.
By the previous part, for every $m\geq 1$, there exists $c_m\in \alpha_m$ such that $\psi_m(c_m)=e$.
Then, $\psi_{r_g+m}(T^{-m}_{c_m}a_{r_g})=g$ and we deduce that $\mu^{r_g+m}(g)>0$.
\end{proof}

Recall that when $G$ is continuous $\psi$ is continuous. The following lemma generalizes the continuity property to $\psi_n$ on cylinders.

\begin{lemma}\label{lemma:contpsi}
Let $G$ be a continuous group.    For every $n\in\mathbb{N}$, the map $\psi_n:X\to G$ is continuous in $n$-cylinders.
\end{lemma}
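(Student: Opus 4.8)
The plan is to decompose $\psi_n$ on a cylinder into continuous building blocks and then exploit the topological group structure of $G$. Fix an $n$-cylinder $a\in\alpha_n$ and write it as $a=a_0\cap T^{-1}a_1\cap\cdots\cap T^{-(n-1)}a_{n-1}$ with each $a_j\in\alpha$, so that every $x\in a$ satisfies $T^jx\in a_j$ for $0\le j\le n-1$. The idea is that confining the orbit segment $x,Tx,\ldots,T^{n-1}x$ to fixed partition elements forces each iterate $T^j$ to be continuous on $a$, and then $\psi_n|_a$ is assembled from these as a finite product in $G$.

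First I would show that each restricted iterate $T^j|_a$, for $0\le j\le n-1$, is continuous. Since $T$ is piecewise continuous with respect to $\alpha$, the restriction $T|_{a_j}$ is continuous for every $j$. I would argue by induction: $T^0|_a=\operatorname{id}$ is continuous with image contained in $a_0$, and assuming $T^j|_a$ is continuous with image in $a_j$, the composition $T^{j+1}|_a=(T|_{a_j})\circ(T^j|_a)$ is continuous and, by the very definition of the cylinder, has image in $a_{j+1}$. This induction runs through $j=n-1$ and is the only step requiring care.

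Next, because $\psi\colon X\to G$ is continuous (our standing assumption when $G$ is continuous), each map $x\mapsto\psi(T^jx)$ is continuous on $a$ as a composition of continuous maps. Finally, since $G$ is a topological group its multiplication $G\times G\to G$ is continuous, hence any finite product of continuous $G$-valued functions is continuous. As the definition~\eqref{eq:defpsin} expresses $\psi_n(x)=\psi(T^{n-1}x)\cdots\psi(Tx)\psi(x)$ as exactly such a product of the maps from the previous step, we conclude that $\psi_n|_a$ is continuous. Since $a\in\alpha_n$ was arbitrary, $\psi_n$ is continuous on $n$-cylinders.

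The main (and essentially the only) obstacle is the inductive control of $T^j|_a$ in the second step: everything rests on the observation that the cylinder structure keeps each iterate inside a single partition element on which $T$ is continuous. Once that is in place, the passage to $\psi_n$ is routine, using only continuity of $\psi$ and of multiplication in the topological group $G$.
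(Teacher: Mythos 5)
Your proof is correct, but it takes a genuinely different route from the paper's. The paper fixes a right-invariant metric $d_G$ generating the topology of $G$ (using metrizability of $G$), telescopes $d_G(\psi_n(x),\psi_n(y))$ into $n$ terms each of which differs in a single factor, cancels the common right-hand factors by right invariance, and then controls each remaining term via continuity of the left translations $L_z$ together with continuity of $\psi$ and $T$; the continuity of the iterates $T^j$ on the cylinder is used only implicitly there. You instead avoid metrics entirely: you make the cylinder decomposition $a=a_0\cap T^{-1}a_1\cap\cdots\cap T^{-(n-1)}a_{n-1}$ explicit, prove by induction that each $T^j|_a$ is continuous (which is a genuine point of care, since $T$ is only piecewise continuous, and the paper glosses over it), and then conclude by joint continuity of multiplication $G\times G\to G$ applied to the finite product. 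Your argument is more economical and slightly more general --- it needs only that $G$ is a topological group, not that it is metrizable --- while the paper's metric computation has the minor advantage of producing an explicit per-factor modulus ($\varepsilon/n$ per term) that could be reused if a quantitative or uniform version were needed. For the statement as given, both are complete proofs.
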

\begin{proof}
    Let $n\in\mathbb{N}$.
    Let $x\in X$ and $\varepsilon>0$.
    Let $d_G$ be a right invariant metric of $G$ that generates the topology of $G$. 
    Observe that for $y\in X$, 
    \begin{align*}
        d_G(\psi_n(x),\psi_n(y))\leq
        & d_G(\psi(T^{n-1}(x))\cdots \psi(T(x))\psi(x),\psi(T^{n-1}(x))\cdots \psi(T(x))\psi(y))\\
        &+ d_G(\psi(T^{n-1}(x))\cdots \psi(T(x))\psi(y),\psi(T^{n-1}(x))\cdots \psi(T(y))\psi(y))\\
        &+ \cdots\\
        &+ d_G(\psi(T^{n-1}(x))\psi(T^{n-2}(y))\cdots \psi(T(y))\psi(y),\psi(T^{n-1}(y))\cdots \psi(T(y))\psi(y))
    \end{align*}
    Using the right invariance of $d_G$, we obtain that
    \begin{align*}
        d_G(\psi_n(x),\psi_n(y))\leq
        & d_G(\psi(T^{n-1}(x))\cdots \psi(T(x))\psi(x),\psi(T^{n-1}(x))\cdots \psi(T(x))\psi(y))\\
        &+ d_G(\psi(T^{n-1}(x))\cdots \psi(T(x)),\psi(T^{n-1}(x))\cdots \psi(T(y)))\\
        &+ \cdots\\
        &+ d_G(\psi(T^{n-1}(x)),\psi(T^{n-1}(y)))
    \end{align*}
    
    Next, note that for every $z\in G$, the map $L_z:G\to G$ given by $L_z(g)=zg$ is continuous.
    This fact  guarantees that there exists $\delta_1$ such that if $d_G(\psi(T^i(x)),\psi(T^i(y)))<\delta_1$, then  $$d_G(\psi(T^{n-1}(x))\cdots\psi(T^{i+1}(x))\psi(T^i(x)),\psi(T^{n-1}(x))\cdots\psi(T^{i+1}(x))\psi(T^i(y)))<\frac{\varepsilon}{n}$$
for each $i\in \{0,\ldots, n-1\}$.
Now, as $\psi$ and $T$ are continuous, there exists $\delta>0$ such that if $d_X(x,y)<\delta$, then $d(\psi(T^i(x)),\psi(T^i(y)))<\delta_1$ for each $i\in\{0,1,\ldots, n-1\}$.
We conclude the proof.
\end{proof}

Using Lemma~\ref{lemma:contpsi}, we obtain the needed condition in the continuous case.
\begin{lemma}\label{lemma:psitive mu(e)}
Let $G$ be a continuous group. Assume~\eqref{eq:Cond-psic}  and~\eqref{eq:S}. Let $A$ be a set in $G$ such that $A$ satisfies the conditions of~\eqref{eq:Cond-psic}.
Let $g\in \bigcup_{i\in\mathbb{N}}\psi_i(X)$.
Then, there exists $n_g\in \mathbb{N}$ such that $\mu^n(Ag)>0$ for every $n \geq n_g$ and every non-empty open neighborhood $A$ of $e$.
\end{lemma}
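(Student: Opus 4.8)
The plan is to reduce the statement to a single positivity phenomenon — that the walk returns to an arbitrarily small neighbourhood of $e$ with positive probability at all large times — and then to transport this to a neighbourhood of $g$. It suffices to establish two facts: (i) for every open neighbourhood $V$ of $e$ there is $N_V$ with $\mu^n(V)>0$ for all $n\ge N_V$; and (ii) positivity near $e$ can be moved to positivity near $g$ using $g\in\bigcup_i\psi_i(X)$. The base case of (i) is immediate: since $e\in\psi(X)$ and $\psi$ is continuous, there is an open set on which $\psi$ takes values in any prescribed neighbourhood of $e$, so $\mu^1(V)>0$ by full support of $\mu$. For (ii) I would fix $y$ with $\psi_r(y)=g$ (where $g\in\psi_r(X)$); by Lemma~\ref{lemma:contpsi} the map $\psi_r$ is continuous on $r$-cylinders, so $W:=\{x\in X:\psi_r(x)\in A'g\}$ is open and, containing $y$, satisfies $\mu(W)>0$ for every neighbourhood $A'$ of $e$.

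The heart of the argument, and the step I expect to be the main obstacle, is (i); this is where the symmetry \eqref{eq:S} is indispensable. Starting from the cocycle identity $\psi_{2n}=(\psi_n\circ T^{n})\cdot\psi_n$, I would write
\[
\mu^{2n}(V)=\int_X \mathbb 1_{V}\big(\psi_n(T^{n}x)\cdot\psi_n(x)\big)\,d\mu(x)
\]
and lower-bound it by the measure of the event on which $\psi_n(x)$ lies in a small neighbourhood $V_1$ of $e$ while $\psi_n(T^{n}x)$ lies in a neighbourhood of $\psi_n(x)^{-1}$, chosen so that the product lands in $V$. Passing to the transfer operator $L$ of $T$, this is a correlation of the form $\int \mathbb 1_{(\cdots)}\,L^{n}\big(\mathbb 1_{\{\psi_n\in V_1\}}\big)\,d\mu$. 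The role of \eqref{eq:S} is that $S$ is measure preserving and commutes with $T$, which identifies the law of $\psi_n$ with the law of the inverse of the reverse-ordered cocycle $\tilde\psi_n(x)=\psi(x)\psi(Tx)\cdots\psi(T^{n-1}x)$; for abelian $G$ (the case verified in Section~\ref{sec:verif}) $\tilde\psi_n=\psi_n$, so this says $(\psi_n)_*\mu$ is invariant under inversion and the correlation becomes a genuine sum of squares, hence positive for all large $n$. I would stress why no elementary pointwise construction can replace this: $T$ is expanding and, by the telescoping estimate in the proof of Lemma~\ref{lemma:contpsi}, $\psi_n$ oscillates by an amount of order one on $n$-cylinders, so a long cocycle cannot be forced near $e$ by concatenating cylinders; the cancellation must be extracted at the level of the measure (equivalently, of the twisted transfer operator), which is exactly the harmonic-analytic mechanism carried out in Section~\ref{sec:verif}. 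Odd times are absorbed by appending one further near-$e$ letter furnished by $e\in\psi(X)$.

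To carry out (ii) I would combine (i) with the identity $\psi_{m+r}=(\psi_r\circ T^{m})\cdot\psi_m$. Writing $n=m+r$ and choosing $V$ so small that $A'\,(gVg^{-1})\subseteq A$, one gets
\[
\mu^{n}(Ag)\ \ge\ \mu\big(\{\psi_m\in V\}\cap T^{-m}W\big)=\int_{W} L^{m}\big(\mathbb 1_{\{\psi_m\in V\}}\big)\,d\mu .
\]
Here $W$ is a fixed open set of positive measure, $\{\psi_m\in V\}$ is open with $\mu(\{\psi_m\in V\})=\mu^m(V)>0$ by (i), and $L^{m}$ preserves the integral. The only point to check is that the nonnegative function $L^{m}(\mathbb 1_{\{\psi_m\in V\}})$ actually charges $W$; this follows from the full-branch/Gibbs structure, since by \eqref{Eq: Gibbs prop} the branch inverses have bounded distortion and their images are $\beta^m$-dense, so the mass of $L^{m}(\mathbb 1_{\{\psi_m\in V\}})$ spreads over $X$ and is bounded below on a positive-measure subset of the open set $W$. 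This yields $\mu^n(Ag)>0$ for all $n\ge n_g:=N_V+r$.

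Finally, since enlarging $A$ only increases $\mu^n(Ag)$, the threshold $n_g$ obtained for one small reference neighbourhood works for every open neighbourhood $A$ of $e$, and the same $n_g$ serves uniformly for $g$ in a fixed $\psi_r(X)$; this is precisely the positivity needed to define the measures $m_n^{Eg}$ in \eqref{eq:mnc}. The single genuinely hard input is the positive-definiteness in step (i); everything else is a combination of continuity (Lemma~\ref{lemma:contpsi}), full branches, and the Gibbs property \eqref{Eq: Gibbs prop}.
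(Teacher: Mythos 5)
Your high-level outline (positivity of $\mu^n(V)$ for small neighbourhoods $V$ of $e$ at all large times, then transport to $g$ using $g\in\psi_r(X)$) matches the paper's strategy, but the two load-bearing steps are asserted rather than proved, and the first rests on a claim that is actually false. You state that ``no elementary pointwise construction can replace'' the positivity in step (i) because $\psi_n$ oscillates on $n$-cylinders; the paper's proof is precisely such a pointwise construction. Using~\eqref{eq:S}, one finds in each branch $a$ a fixed point $x_0$ of $S^{-1}\circ T$, so $T(x_0)=S(x_0)$, whence $T^k(x_0)=S^k(x_0)$ and $\psi(T^kx_0)=\psi(x_0)^{(-1)^k}$, giving $\psi_{2n}(x_0)=e$ \emph{exactly} for every $n$. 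Continuity of $\psi_{2n}$ on $2n$-cylinders (Lemma~\ref{lemma:contpsi}) then makes $\psi_{2n}^{inv}(V)$ a nonempty open set, and full support of $\mu$ gives $\mu^{2n}(V)>0$; the oscillation of $\psi_{2n}$ over the whole cylinder is irrelevant because one only needs an open neighbourhood of $x_0$. Your substitute for this -- the ``sum of squares'' positivity via $\psi_{2n}=(\psi_n\circ T^n)\cdot\psi_n$ -- is not carried out, and it cannot be carried out as written: $\psi_n(x)$ and $\psi_n(T^nx)$ are not independent, so turning the correlation $\int 1_{(\cdots)}L^n\bigl(1_{\{\psi_n\in V_1\}}\bigr)\,d\mu$ into a positive quantity requires a decoupling estimate uniform in $n$ on both factors, which is exactly what assumption~\eqref{eq:Cond-psic} supplies and what your argument never invokes. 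Deferring to ``the harmonic-analytic mechanism of Section~\ref{sec:verif}'' is not admissible here, since that section verifies~\eqref{eq:Cond-psic} for particular groups, whereas this lemma must be deduced abstractly from~\eqref{eq:Cond-psic} and~\eqref{eq:S}.

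The second gap is the decoupling you need twice more: for odd times (``absorbed by appending one further near-$e$ letter'') and for the transport step, where you must show $\mu\bigl(\{\psi_m\in V\}\cap T^{-m}W\bigr)>0$. Your justification -- that $L^m\bigl(1_{\{\psi_m\in V\}}\bigr)$ ``spreads over $X$'' by full branches and the Gibbs property -- does not hold: $\{\psi_m\in V\}$ is merely a nonempty open set, not a union of full $m$-cylinders, so $\bigcup_a T^m\bigl(a\cap\{\psi_m\in V\}\bigr)$ is a nonempty open set that need not meet $W$, and mixing of $T$ does not apply because the set $\{\psi_m\in V\}$ moves with $m$. This is where the paper makes its essential use of~\eqref{eq:Cond-psic}: it picks $b\in\alpha_{n'+1}$ with $b\subseteq\psi^{inv}(A_1)$ and $c=T(b)\in\alpha_{n'}$, applies~\eqref{eq:Cond-psic} to conclude $\mu\bigl(c\cap\psi_n^{inv}(A_1)\bigr)>0$ for all large even $n$, and then uses invariance of $\mu$ to get $\mu\bigl(b\cap T^{-1}\psi_n^{inv}(A_1)\bigr)>0$, producing a point $x$ with $\psi_{n+1}(x)\in A_1A_1\subseteq A$. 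Since your proposal never uses~\eqref{eq:Cond-psic} at all, and that hypothesis is what bridges the gap between ``both sets are open and of positive measure'' and ``their relevant intersection is of positive measure,'' the argument as written does not close.
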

\begin{proof}Throughout the proof, we will use that, by Lemma~\ref{lemma:contpsi},
$\psi_n$ is continuous in $n$-cylinders.

    We first  use \eqref{eq:S}.
    Take $x_0\in X$ such that $T(x_0)=S(x_0)$. 
    Since $S^{-1}\circ T: a\to X$ is onto and continuous for each $a\in\alpha$, there exists a point $x_0\in a$ such that $S^{-1}\circ T(x_0)=x_0$, so $T(x_0)=S(x_0)$.
    For such a point, $T^2(x_0)=T\circ S(x_0)=S\circ T (x_0)=S^2(x_0)$ and $\psi_2(x_0)=\psi(T(x_0))\psi(x_0)=\psi(S(x_0))\psi(x_0)=e$.
    As $\psi_{2n}$ is continuous, then $\psi_{2n}^{inv}(A)$ is a non-empty open set of $X$ for each open set $A$ containing $e$. 
    Hence, $\mu(\psi_{2n}^{inv}(A))>0$, for each $n\in\mathbb{N}$.
    
  Now, let $A$ be an open neighborhood of $e$.
    By the continuity of the operation in $G$, there exists $A_1\subseteq G$, an open neighborhood of $e$, such that $A_1A_1\subseteq A$. 
    Note that  $\psi^{inv}(A_1)$ is a non-empty open set.
    Thus, there are $n_0\in\mathbb{N}$, $a_n\in \alpha_n$, for each $n\geq n_0$, such that $a_n\subseteq \psi^{inv}(A_1)$. 
    Applying condition \eqref{eq:Cond-psic} for any $\varepsilon>0$ and for $n_0$ as above, there exist $n_1\geq n_0$ and $n'\in \{n_0,\ldots,n_1\}$ satisfying \eqref{eq:Cond-psic}.
    Let $n\geq n_1$ be an even number.
    Consider $b\in \alpha_{n'+1}$ in such a way that $b\subseteq \psi^{inv}(A_1)$.
    Thus, there exists $c\in \alpha_{n'}$ such that $T(b)=c$.
    Condition \eqref{eq:Cond-psic} implies that $\mu(c\cap \psi_{n}^{inv}(A_1))>0$ for each $n\geq n_1$.
    As $\mu$ is invariant and it has full support, $\mu(b\cap T^{-1}(\psi^{inv}_n(A_1)))>0$.
    In particular, there exists $x\in b\cap T^{-1}(\psi^{inv}_n(A_1))$ and consequently, $\psi_n(T(x))\psi(x)=\psi_{n+1}(x)\in A_1A_1\subseteq A$.
    Using that $\psi_{n+1}$ is continuous, we obtain that $\psi^{inv}_{n+1}(A)$ is a non-empty open set, and hence, $\mu(\psi^{inv}_{n+1}(A))>0$. 
    Therefore, if $n_e=n_1+1$ then $\mu^n(\psi_n^{inv}(A))>0$ for each $n\geq n_e$.
The proof for $g\in \bigcup_{i\in\mathbb{N}}\psi_i(X)$ works in an analogous way.
\end{proof}

The potential $\varphi$ is H\"older continuous, so there is $\beta \in (0,1)$ and $C_H > 0$ satisfying
$|\varphi(x)-\varphi(y)| \leq C_H d_X(x,y)^\beta$
for all $x,y \in X$. Cylinder sets are exponentially small, so we can find $\lambda_0 = \lambda_0(T, \beta) \in (0,1)$ such that
\begin{equation}\label{eq:phi-holder}
 e^{\varphi_k(x) } \sim_{1+\lambda_0^{n-k}} e^{\varphi_k(y)} \quad \text{ for all } x,y \in a \in \alpha_n,\, n > k,
\end{equation}
where $\varphi_k(x) = \sum_{j=0}^{k-1} \varphi(T^j(x))$ is the $k$-th ergodic sum.
Also, due to the intermediate value theorem for integrals, for each $k \in \N$, $m > k$, and every measurable $b \subset a \in \alpha_m$, we can select $\xi^k_b \in a$ such that
\begin{equation}\label{eq:IMT}
 \mu(T^k b) = \int_b e^{ -\varphi_k(x) } d\mu(x)
 = e^{ -\varphi_k(\xi^k_b) } \mu(b).
\end{equation}

\begin{lemma}\label{lem:m_nu}
 Assume \eqref{eq:Cond-psi} or~\eqref{eq:Cond-psic}
 and let $u \in \alpha_k$ for some $k$.
 Then for each $g \in G$, we have $m_n^g(u) \to \mu(u)$ and $m_n^{Eg}(u)\to \mu(u)$ as $n \to \infty$.
\end{lemma}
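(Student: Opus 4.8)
The plan is to evaluate both sequences of measures on the indicator $v=\mathbf{1}_u$ and to exploit the fact that, for $n>k$, the $k$-cylinder $u$ is, mod $\mu$, a disjoint union of $n$-cylinders; consequently $\int_a \mathbf{1}_u\,d\mu$ equals $\mu(a)$ when $a\subseteq u$ and vanishes otherwise. This collapses the defining sums to a single ratio of measures, after which the standing assumptions supply exactly the multiplicative control needed.

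First, in the discrete case, I would substitute $v=\mathbf{1}_u$ into \eqref{eq:mn}. Only the cylinders $a\in\alpha_n$ with $a\subseteq u$ contribute, so the sum reduces to $m_n^g(u)=\mu(u\cap\{\psi_n=g\})/\mu^n(g)$, recalling that $\mu^n(g)=\mu(\{\psi_n=g\})$ and that $\mu^n(g)>0$ for $n$ large by Lemma~\ref{lemma: positivemun}. An identical computation in the continuous case, using \eqref{eq:mnc} together with the observation that $\mu(a\cap\psi_n^{inv}(Eg))=0$ whenever $\psi_n(a)\cap Eg=\emptyset$, gives $m_n^{Eg}(u)=\mu(u\cap\{\psi_n\in Eg\})/\mu^n(Eg)$, where positivity of the denominator for large $n$ is provided by Lemma~\ref{lemma:psitive mu(e)}.

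Next, to control this ratio I would invoke \eqref{eq:Cond-psi} (resp. \eqref{eq:Cond-psic}) for the fixed $g$ and a given $\eps>0$, choosing the free parameter $n_0\geq k$. This yields an $n_1$ such that for every $n\geq n_1$ there is some $n'\in\{n_0,\dots,n_1\}$ with $\mu(b\cap\{\psi_n=g\})\sim_{1+\eps}\mu(b)\,\mu^n(g)$ for all $b\in\alpha_{n'}$. Since $n'\geq n_0\geq k$, the set $u$ decomposes mod $\mu$ as a disjoint union of cylinders $b\in\alpha_{n'}$ with $b\subseteq u$; summing the comparison over these $b$ and using additivity produces $\mu(u\cap\{\psi_n=g\})\sim_{1+\eps}\mu(u)\,\mu^n(g)$. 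Dividing by $\mu^n(g)$ gives $m_n^g(u)\sim_{1+\eps}\mu(u)$ for all $n\geq n_1$, and letting $\eps\to 0$ yields the claim. The continuous case is verbatim with $\{\psi_n=g\}$ replaced by $\{\psi_n\in Eg\}$.

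The one delicate point is the quantifier ordering in \eqref{eq:Cond-psi}/\eqref{eq:Cond-psic}: the comparison scale $n'$ may depend on $n$ and is only guaranteed to lie in the window $\{n_0,\dots,n_1\}$. The main obstacle is therefore to guarantee that the decomposition of $u$ into $\alpha_{n'}$-cylinders is available regardless of which $n'$ gets selected. This is precisely why I would fix $n_0\geq k$ at the outset: every admissible $n'$ then refines $\alpha_k$, so the summation argument applies uniformly in $n$ and the estimate holds for the whole tail $n\geq n_1$.
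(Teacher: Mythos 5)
Your proposal is correct and follows essentially the same route as the paper: fix $n_0\geq k$, invoke \eqref{eq:Cond-psi}/\eqref{eq:Cond-psic} to get the window $\{n_0,\dots,n_1\}$, decompose $u$ into $\alpha_{n'}$-cylinders (which is legitimate for any $n'$ in the window precisely because $n_0\geq k$), and sum the comparison. The only cosmetic difference is that you collapse the defining sum to the single ratio $\mu(u\cap\psi_n^{inv}(Eg))/\mu^n(Eg)$ before re-expanding over $\alpha_{n'}$, whereas the paper regroups the $\alpha_n$-sum into $\alpha_{n'}$-blocks directly; the content is identical.
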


\begin{proof}
We provide the argument assuming~\eqref{eq:Cond-psic}.
The argument assuming~\eqref{eq:Cond-psi} is easier, and also a special case of this,
because when taking $Eg = \{ g \}$ in the discrete case, we have
$\mu(a \cap \psi_n^{inv}(Eg)) = \mu(a)$ if
$\psi_n(a) = g$ and $\mu(a \cap \psi_n^{inv}(Eg)) = 0$ otherwise. We recall that in the discrete case, we assume that $\psi$ is constant on each $a\in\alpha$.

\medskip

 Let $\eps> 0$ and $n_0 \geq k$ be arbitrary and $n_1 > n_0$ such that \eqref{eq:Cond-psic} holds.
 Recall the definition of $m_n^{Eg}$ in~\eqref{eq:mnc}.
 Then for $n > n_1$ and $n_0 \leq n' \leq n_1$ as in  \eqref{eq:Cond-psic}, we have
 \begin{eqnarray*}
  m_n^{Eg}(u) &=& \frac{1}{\mu^n(Eg)} \sum_{\stackrel{a \in \alpha_n, a \subset u}{ \psi_n(a) \cap Eg \neq \emptyset}} \mu(a \cap \psi_n^{inv}(Eg)) \\
   &=& \frac{1}{\mu^n(Eg)} \sum_{\stackrel{b \in \alpha_{n'}}{b \subset u}}
   \sum_{\stackrel{a \in \alpha_n, a \subset b}{\psi_n(a) \cap Eg \neq \emptyset}} \mu(a \cap \psi_n^{inv}(Eg)) \\
    &=& \frac{1}{\mu^n(Eg)} \sum_{\stackrel{b \in \alpha_{n'}}{b \subset u}}
            \mu(b \cap \psi_n^{inv}(Eg)) \sim_{1+\eps} \sum_{b \in \alpha_{n'}, b \subset u} \mu(b) = \mu(u),
 \end{eqnarray*}
where in the last line we used \eqref{eq:Cond-psic}.
Since $\eps > 0$ is arbitrary, the result follows.
\end{proof}

First, we record a lemma, which is a main ingredient
for the proof of Theorem~\ref{thm:amen}, when $G$ is discrete.

\begin{lemma}\label{firstnallemma61ds}Suppose that $G$ is discrete.
 Let $u\in\alpha_k$ for some $k \in \N$ and assume \eqref{eq:Cond-psi}.
 Then for every $g\in\cup_{k\in\N}\psi_k(X)$,
 \[
 \lim_{n\to\infty} \left|m_n^g(u)-\frac{ \mu^{n-k}(g\psi_k(u)^{-1}) }{\mu^n(g)} \, \mu(u) \right| = 0.
 \]
\end{lemma}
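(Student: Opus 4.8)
The plan is to compute $m_n^g(u)$ directly from its definition~\eqref{eq:mn} and rewrite the resulting sum over $n$-cylinders contained in $u$ as a sum over $(n-k)$-cylinders, which is where the term $\mu^{n-k}(g\psi_k(u)^{-1})$ will emerge. First I would observe that since $\psi$ is constant on partition elements in the discrete case, the cocycle relation $\psi_n(x) = \psi_{n-k}(T^k x)\,\psi_k(x)$ holds, and for $x \in a \subset u$ with $a \in \alpha_n$ we have $\psi_k(a) = \psi_k(u)$. Thus the condition $\psi_n(a) = g$ is equivalent to $\psi_{n-k}(T^k a) = g\,\psi_k(u)^{-1}$. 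Because $T$ has full branches and $u \in \alpha_k$, the map $a \mapsto T^k a$ is a bijection from $\{a \in \alpha_n : a \subset u\}$ onto $\alpha_{n-k}$. This gives a clean reindexing of the sum.

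Next I would estimate $\mu(a)$ against $\mu(T^k a)$ using the Gibbs/Hölder control. Writing $c = T^k a \in \alpha_{n-k}$ and using~\eqref{eq:IMT}, there is $\xi^k_a \in a$ with $\mu(c) = e^{-\varphi_k(\xi^k_a)}\mu(a)$, so that $\mu(a) = e^{\varphi_k(\xi^k_a)}\mu(c)$. Comparing $e^{\varphi_k(\xi^k_a)}$ to the value at a reference point in $u$ via~\eqref{eq:phi-holder}, all these distortion factors are $\sim_{1+\lambda_0^{n-k}}$ close to a common constant, which up to the same factor equals $\mu(u)$ (one applies~\eqref{eq:IMT} with $b = u$ as well, noting $T^k u = X$). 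Therefore
\begin{align*}
 m_n^g(u) = \frac{1}{\mu^n(g)} \sum_{\substack{a \in \alpha_n,\, a \subset u \\ \psi_n(a)=g}} \mu(a)
 = \frac{1}{\mu^n(g)} \sum_{\substack{c \in \alpha_{n-k} \\ \psi_{n-k}(c) = g\psi_k(u)^{-1}}} e^{\varphi_k(\xi^k_a)} \mu(c),
\end{align*}
and replacing $e^{\varphi_k(\xi^k_a)}$ by $\mu(u)$ incurs a multiplicative error bounded by $\lambda_0^{n-k}$ uniformly over the summands. The remaining sum is exactly $\mu^{n-k}(g\psi_k(u)^{-1})$ by its definition in the discrete case.

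Combining these, I would arrive at
\begin{align*}
 \left| m_n^g(u) - \frac{\mu^{n-k}(g\psi_k(u)^{-1})}{\mu^n(g)}\,\mu(u) \right|
 \le \frac{C\lambda_0^{n-k}}{\mu^n(g)}\,\mu^{n-k}(g\psi_k(u)^{-1})\,\mu(u)
\end{align*}
for a constant $C$ coming from the distortion bound. To conclude, I would control the ratio $\mu^{n-k}(g\psi_k(u)^{-1})/\mu^n(g)$: by Lemma~\ref{lem:m_nu} we already know $m_n^g(u) \to \mu(u)$, and the displayed identity (read the other way) shows the ratio converges to a finite limit, so the prefactor $\lambda_0^{n-k} \to 0$ forces the whole error to vanish.

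The main obstacle will be justifying that the prefactor $\mu^{n-k}(g\psi_k(u)^{-1})/\mu^n(g)$ stays bounded as $n \to \infty$, since a priori the denominator $\mu^n(g)$ could decay faster than the numerator. The clean way around this is to note that the main term of the identity, combined with the already-established convergence $m_n^g(u)\to\mu(u)>0$ from Lemma~\ref{lem:m_nu}, pins the ratio to a bounded (indeed convergent) sequence; once boundedness is in hand, multiplying by $\lambda_0^{n-k}$ kills the error. Care must also be taken that $\mu^n(g)>0$ for all large $n$, which is guaranteed by Lemma~\ref{lemma: positivemun} for $g \in \cup_k \psi_k(X)$, so all quotients are well defined.
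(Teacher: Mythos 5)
Your reindexing of the sum over $n$-cylinders $a\subset u$ with $\psi_n(a)=g$ as a sum over $(n-k)$-cylinders $c=T^k a$ with $\psi_{n-k}(c)=g\psi_k(u)^{-1}$, weighted by $e^{\varphi_k(\xi^k_a)}$, matches the paper's opening step. The gap is in what follows: you claim that all the factors $e^{\varphi_k(\xi^k_a)}$ are $\sim_{1+\lambda_0^{n-k}}$ close to a common constant. Estimate~\eqref{eq:phi-holder} compares $\varphi_k$ at two points of the \emph{same} $n$-cylinder; the points $\xi^k_a$ for distinct $a\subset u$ lie in \emph{different} $n$-cylinders and are only guaranteed to share the $k$-cylinder $u$, so their separation time can be as small as $k$ and the comparison $e^{\varphi_k(\xi^k_{a_1})}\sim e^{\varphi_k(\xi^k_{a_2})}$ holds only up to the fixed Gibbs distortion constant, which does not tend to $1$ as $n\to\infty$. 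Your argument therefore yields only $m_n^g(u)\sim_{C}\frac{\mu^{n-k}(g\psi_k(u)^{-1})}{\mu^n(g)}\,\mu(u)$ for a constant $C>1$ independent of $n$, which is not enough to make the difference vanish. Without an equidistribution input, the level set $\{\psi_{n-k}=g\psi_k(u)^{-1}\}$ could concentrate on cylinders where the weight $e^{\varphi_k(\xi^k_a)}$ is systematically larger or smaller than its average $\mu(u)$, and the claimed asymptotic identity would fail by a constant factor.

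The tell-tale sign is that your proof never invokes hypothesis~\eqref{eq:Cond-psi}, which appears in the statement of the lemma and is exactly what closes this gap. The paper inserts an intermediate scale: it groups the $(n-k)$-cylinders $a'$ by the $n'$-cylinders $b\supset a'$ with $n_0\le n'\le n_1$ and $\lambda_0^{n_0-k}<\eps$, so that within each group the weights really do agree up to $1+\lambda_0^{n'-k}$ (the relevant points now share a deep cylinder), and then applies~\eqref{eq:Cond-psi} to replace $\sum_{a'\subset b}\mu(a')$ by $\mu(b)\,\mu^{n-k}(g\psi_k(u)^{-1})$ up to $1+\eps$; the remaining weighted sum $\sum_b e^{\varphi_k(\xi^k_b)}\mu(b)$ collapses to $\mu(u)$. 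Your closing step (bounding the ratio $\mu^{n-k}(g\psi_k(u)^{-1})/\mu^n(g)$, either via Lemma~\ref{lem:m_nu} or simply via $m_n^g(u)\le 1$ as the paper does) is fine once a multiplicative estimate of the form $1+o(1)$ is in hand, but you need~\eqref{eq:Cond-psi} to get there.
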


\begin{proof}Recall the definition of $m_n^{g}$ in~\eqref{eq:mn}.
For $g\in\cup_{k\in\N}\psi_k(X)$, and the cylinder set $u\in\alpha_k$, we have
\begin{eqnarray*}
    m_n^g(u)
     &=& %\dfrac{1}{\mu^n(g)} \sum_{a\in\alpha_n, \psi_n(a)=g} \int_a 1_u \, d\mu\ =\
     \dfrac{1}{\mu^n(g)} \sum_{\stackrel{a\in\alpha_n, \psi_n(a)=g}{a \subset u}} \mu(a) \\
    &=& \dfrac{ \mu^{n-k}(g\psi_k(u)^{-1}) }{ \mu^n(g) } \dfrac{1}{ \mu^{n-k}(g\psi_k(u)^{-1}) }  \sum_{\stackrel{a \subset u, T^k(a) = a' \in \alpha_{n-k}}{\psi_{n-k}(a')=g \psi_k(u)^{-1}}} \frac{\mu(a)}{\mu(a')} \mu(a')
    \\
    &=& \dfrac{ \mu^{n-k}(g\psi_k(u)^{-1}) }{ \mu^n(g) } \dfrac{1}{ \mu^{n-k}(g\psi_k(u)^{-1}) }  \sum_{\stackrel{a \subset u, a' \in \alpha_{n-k}}{\psi_{n-k}(a')=g \psi_k(u)^{-1}}} e^{\varphi_k(\xi^k_a)} \mu(a'),
\end{eqnarray*}
where in the last line we used \eqref{eq:IMT} and the fact that $T^k:u \to X$ is a bijection.
Choose $n \ge n-k > n_1 \geq n' > n_0$ according to \eqref{eq:Cond-psi}
where $n_0$ is such that $\lambda^{n_0-k} < \eps$.
Then, using H\"older continuity of $\varphi$ as in \eqref{eq:phi-holder}, we estimate
\begin{align*}
 \dfrac{1}{ \mu^{n-k}(g\psi_k(u)^{-1}) } & \sum_{\stackrel{a' \in \alpha_{n-k}}{\psi_{n-k}(a')=g\psi_k(u)^{-1}}} e^{\varphi_k(\xi^k_a)} \mu(a') \\
 &=\
 \dfrac{1}{ \mu^{n-k}(g\psi_k(u)^{-1}) }  \sum_{b \in \alpha_{n'}} \sum_{\stackrel{a' \in \alpha_{n-k}, a' \subset b}{\psi_{n-k}(a')=\psi_k(u)^{-1}}} e^{\varphi_k(\xi^k_a)} \mu(a') \\
& \sim_{1+\lambda^{n'-k}}\
 \dfrac{1}{ \mu^{n-k}(g\psi_k(u)^{-1}) }  \sum_{b \in \alpha_{n'}}  e^{\varphi_k(\xi^k_b)} \sum_{\stackrel{a' \in \alpha_{n-k}, a' \subset b}{\psi_{n-k}(a')=g\psi_k(u)^{-1}}} \mu(a') \\
\text{by (D)} & \sim_{1+\eps}\
 \sum_{b \in \alpha_{n'}}  e^{\varphi_k(\xi^k_b)}\mu(b) \\
& \sim_{1+\lambda^m}\  \sum_{b \in \alpha_{n'}}  \int_b e^{\varphi_k(T^{-k}_ux)}\, d\mu(x) \ =\  \int_X e^{\varphi_k(T^{-k}_ux)}\, 1_u  d\mu(x) \\
& =\  \int_X L^k 1_u \, d\mu\ =\ \mu(u),
\end{align*}
where $T^{-k}_u$ denotes the inverse branch of $T^k:u \to X$.
In conclusion,
$$
m_n^g(u) \sim_{(1+\eps)^3}
\dfrac{ \mu^{n-k}(g\psi_k(u)^{-1}) }{ \mu^n(g) }  \mu(u).
$$
Since $\eps > 0$ was arbitrary and $m_n^g(u) \leq 1$, the result follows.
\end{proof}

Next, we record a lemma, which is a main ingredient
for the proof of Theorem~\ref{thm:amen} when $G$ is continuous.

\begin{lemma}\label{lem:hd} Let $G$ be a continuous topological group and assume that
~\eqref{eq:contmix} holds for some compact set $F \subset G$ with $m_G(F) \in (0,\infty)$.
Fix a bounded set $E \subset G$ with $m_G(E) = m_G(E \setminus \partial E) > 0$.
Choose $x_0 \in X$ and $r \in \N$ arbitrary, and set $g_1 := \psi_r(x_0)$.
Then for every $\eps > 0$, there is $n_0 \in \N$ with the following properties.
Let $u' \in \alpha_{n_0+r}$ be the $n_0+r$-cylinder containing $x_0$ and $u = T^r(u')$,
and let $T_{u'}^{-r}:u \to u'$ denote the inverse branch of $T^r$ mapping to $u'$.
Then there is $n_2  > n_0$ such that
 \begin{align*}\label{eq:hd}
  \frac{ \left| \mu(\{ x \in u : \psi_n(x) \in Eg \} ) - \mu(\{ x \in u  : \psi_n(x) \in Egg_1\psi_r(T^{-r}_{u'}x)^{-1} \} ) \right| }
 {\mu(\{ x \in u : \psi_n(x) \in Eg\})}   < \eps.
\end{align*}
for every $n \geq n_2$.
\end{lemma}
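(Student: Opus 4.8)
The plan is to show that, uniformly for $x \in u$, the two target regions $Eg$ and $Egh(x)$ with $h(x) := g_1\, \psi_r(T^{-r}_{u'}x)^{-1}$ differ only by a right multiplication by an element close to the identity, and that such a perturbation changes the corresponding level set of $\psi_n$ by a negligible $\mu$-amount. First I would exploit Lemma~\ref{lemma:contpsi}: as $n_0 \to \infty$ the cylinder $u' \in \alpha_{n_0+r}$ containing $x_0$ shrinks to $\{x_0\}$, and since $T^{-r}_{u'}x \in u'$ and $\psi_r$ is continuous on cylinders, $\psi_r(T^{-r}_{u'}x) \to \psi_r(x_0) = g_1$ uniformly in $x \in u$. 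Hence $h(x) \to e$ uniformly, and for any prescribed symmetric neighbourhood $A$ of $e$ we may fix $n_0$ so large that $h(x) \in A$ for all $x \in u$. Writing $S_1 = \{x\in u : \psi_n(x) \in Eg\}$ and $S_2 = \{x \in u : \psi_n(x) \in Egh(x)\}$, the inclusions $\bigcap_{a\in A} Ega \subseteq Egh(x) \subseteq EgA$ (valid since $e \in A$ and $h(x)\in A$) yield $S_1 \triangle S_2 \subseteq \{x\in u : \psi_n(x) \in \Delta\}$ with $\Delta := EgA \setminus \bigcap_{a\in A}Ega$ a boundary collar of $Eg$; in particular $|\mu(S_1)-\mu(S_2)| \le \mu(\{x\in u:\psi_n(x)\in\Delta\})$.

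Next I would record that the collar is $m_G$-negligible in the limit: the dilation satisfies $EgA \searrow \overline{Eg}$ and the erosion $\bigcap_{a\in A}Ega \nearrow \operatorname{int}(Eg)$ as $A \searrow \{e\}$, so $m_G(\Delta) \to m_G(\partial(Eg)) = m_G((\partial E)g) = 0$, using the regularity $m_G(\partial E)=0$ built into the hypotheses of~\eqref{eq:contmix}. The core of the argument is then the transfer of this $m_G$-smallness into a $\mu$-estimate, and here \eqref{eq:contmix} and Lemma~\ref{lem:m_nu} combine so that the cylinder weight cancels. For the numerator I would apply~\eqref{eq:contmix} in smeared form with a neighbourhood $V \subseteq F$ of $e$ as test set: since $\psi_n(x) \in \Delta$ forces $V \subseteq \psi_n(x)^{-1}(\Delta V)$, one gets $m_G(V)\,\mu(\{x\in u:\psi_n(x)\in\Delta\}) \le \mu\otimes m_G\big((u\times V)\cap T_\psi^{-n}(X\times \Delta V)\big) \le \mu(u)\, m_G(V)\, \tfrac{m_G(\Delta V)}{m_G(E)}\,\mu^n(Eg)$, whence $\mu(\{x\in u:\psi_n(x)\in\Delta\}) \le \mu(u)\,\tfrac{m_G(\Delta V)}{m_G(E)}\,\mu^n(Eg)$. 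For the denominator, Lemma~\ref{lem:m_nu} gives $m_n^{Eg}(u) \to \mu(u)$, so $\mu(\{x\in u:\psi_n(x)\in Eg\}) \ge \tfrac12 \mu(u)\,\mu^n(Eg)$ for all $n$ past some $n_2$. Dividing, the common factor $\mu(u)\,\mu^n(Eg)$ cancels and the ratio in the statement is bounded by $2\,m_G(\Delta V)/m_G(E)$.

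The step I expect to be the main obstacle is this last transfer: \eqref{eq:contmix} only controls the $F$-smeared occupation of a set, so the thin collar $\Delta$ is inevitably replaced by its thickening $\Delta V$, and for a test set of fixed positive measure this thickening need not be small even when $m_G(\Delta)$ is. I would resolve this by choosing the smearing neighbourhood $V$ small relative to the collar thickness already fixed by $A$, so that $m_G(\Delta V) \le 2\,m_G(\Delta)$; this is exactly where one uses that \eqref{eq:contmix} is available for a family of test sets shrinking to $e$, and not merely for a single one. Matching the smearing scale $V$ to the vanishing collar while preserving the cancellation of $\mu(u)\,\mu^n(Eg)$ is the delicate bookkeeping. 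Once it is arranged, the ratio is at most $4\,m_G(\Delta)/m_G(E)$; first choosing $A$, hence $n_0$, so that this is $<\eps$, and then taking $n_2$ from Lemma~\ref{lem:m_nu}, completes the proof.
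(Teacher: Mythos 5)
Your proposal is correct and follows essentially the same route as the paper: both proofs reduce the numerator to the $\mu$-measure of the set where $\psi_n$ lands in a boundary collar of $Eg$ (your $\Delta = EgA \setminus \bigcap_{a\in A}Ega$ versus the paper's metric collar $\partial_\eta Eg$), transfer this to a product-measure bound by smearing with a small neighbourhood of $e$, and then invoke~\eqref{eq:contmix} with the thickened collar as the test set to get the factor $\mu(u)\,m_G(\text{collar})/m_G(E)\cdot\mu^n(Eg)$. Your explicit lower bound on the denominator via Lemma~\ref{lem:m_nu} is a step the paper leaves implicit but does need, and your matching of the smearing scale to the collar thickness is exactly the paper's $\partial_\eta \to \partial_{2\eta}$ device with $F = \overline{B_\eta(e)}$.
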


\begin{remark}\label{rem:hd}
From the proof of this lemma, it is also clear that for $n_1 > n_0$ we can adjust $n_2 > n_1$ such that the statement holds for every
 $n'$-cylinder $a \subset u$ for $n_0 \leq n' \leq n_1$ instead of $u$ itself.
\end{remark}

\begin{proof}
We will exploit assumption~\eqref{eq:contmix} with $F = \overline{B_\eta(e)}$, a closed $\eta$-ball around the identity, with $\eta$ still to be determined.

Choose $\eps > 0$ arbitrary, and suppose $Eg \subset G$ and $x_0\in X$ with $g_1 = \psi_r(x_0)$ are as in the lemma.
Let $\partial_\eta Eg  := \{ h \in G : d_G(\partial Eg, h) < \eta\}$.
By translation invariance of the Haar measure, 
$m_G(\partial_\eta Eg) \to 0$ as $\eta \to 0$.
Fix $\eta > 0$ such that
\begin{equation}\label{eq:et}
 m_G(\partial_{2\eta} Eg) < \eps.
\end{equation}

By continuity of $\psi_r$, we can choose $n_0$ so large, i.e.,  the cylinder $u \in \alpha_{n_0}$ containing $T^r(x_0)$ is so small, that $d_G(g_1 \psi_r(T^{-r}_{u'}x)^{-1}, e) < \eta$ for all $x \in u$.
If $\psi_n(x) \in Eg$ but $\psi_n(x) \notin Egg_1 \psi_r(T^{-r}_{u'}x)^{-1}$
or if $\psi_n(x) \notin Eg$ but $\psi_n(x) \in Egg_1 \psi_r(T^{-r}_{u'}x)^{-1}$,
then $\psi_n(x) \in \partial_\eta Eg$.
Hence,
\begin{align*}
 \left| \mu(\{ x\in u : \psi_n(x) \in Eg\})  \right. & - \left.  \mu(\{ x\in u : \psi_n(x) \in Egg_1 T^{-r}_{u'}x)^{-1} \}) \right| \\[1mm]
 &\leq \mu(\{ x\in u : \psi_n(x) \in \partial_\eta Eg\}) \\
 &\le \mu(\{ x\in u : \psi_n(x)g' \in \partial_{2\eta} Eg \text{ for all } g' \in F\}) \\
 &\le \frac{1}{m_G(F)} \, \mu \otimes m_G(\{ (x,g') \in u \times F : T^n_\psi(x,g') \in \partial_{2\eta} Eg\})\\
 &=\frac{\mu\otimes m_G \left((u \times F)\cap T_\psi^{-n}(X\times \partial_{2\eta} Eg)\right)}{m_G(F)},
 \end{align*}
 where in the third line we have used that $F = \overline{B_\eta(e)}$
 and in the fourth line we have used that
 \begin{align*}
  \mu \otimes m_G(\{ (x,g') \in u \times F : T^n_\psi(x,g') \in \partial_{2\eta} Eg\})&=\mu \otimes m_G(\{ (x, g')\in u\times F : \psi_n(x)g' \in \partial_{2\eta} Eg)\\
  &\ge\mu(\{ x\in u : \psi_n(x)g' \in \partial_{2\eta} Eg \text{ for all } g' \in F\})  m_G(F).
 \end{align*}

 By~\eqref{eq:contmix} with $a=u$ and $A=\partial_{2\eta} Eg$,
 \begin{align*}
  \frac{1}{m_G(F)}\mu\otimes m_G \left((u \times F)\cap T_\psi^{-n}(X\times \partial_{2\eta} Eg)\right)&\le \mu(u)\frac{m_G(\partial_{2\eta} Eg)}{m_G(E)}\mu(\{x \in X : \psi_n(x) \in Eg\})\\
  &\le \eps \frac{\mu(u)}{m_G(E)}\mu(\{x \in X : \psi_n(x) \in Eg\}),
 \end{align*}
where in the last equation we have used~\eqref{eq:et}. The conclusion follows since $\eps$ is arbitrary.
\end{proof}

Using Lemma~\ref{lem:m_nu} together with Lemma~\ref{firstnallemma61ds} or Lemma~\ref{lem:hd}, we can complete

\begin{proof}[Proof of Theorem~\ref{thm:amen}]
We do the continuous case, that is item (b), because if $G$ is discrete,
then it is a special case of the continuous case (as already mentioned at the beginning of the proof of
Lemma~\ref{lem:m_nu}). In the discrete case everything goes the same,
except that the step involving the use of Lemma~\ref{lem:hd} is not required
and instead we have a much easier use of Lemma~\ref{firstnallemma61ds}.

\medskip

Throughout the proof we work with $m_n^{Eg}$ as defined in~\eqref{eq:mnc}.
By assumption, we can fix  $x_0 \in X$ such that $\psi(x_0) = e$.
Choose $\eps > 0$ and $n_1 \in \N$ arbitrary and $n_1 > n_0$ so large that
Lemma~\ref{lem:hd} holds for all $u = T(u')$, $x_0 \in u' \in \alpha_{n_0+1}$  and $n \geq n_2 > n_1$,
where $n_1$ is chosen as in~\eqref{eq:Cond-psic}, and so that Remark~\ref{rem:hd} applies as well.

Take $n \geq n_2$ so large that $\lambda_0^{n-n_2}  < \eps$ and $\mu^n(Eg) > 0$.
For $n' \in [n_0, n_1]$ as in~\eqref{eq:Cond-psic}, we get
\begin{eqnarray*}
m_n^{Eg}(u)
 &=& \dfrac{1}{\mu^n(Eg)} \sum_{a\in\alpha_n, a \subset u} \mu(a \cap  \psi_n^{inv}(Eg)) \\
 &=& \dfrac{ 1 }{ \mu^n(Eg) }  \sum_{a' \in\alpha_{n'}, a' \subset u} \mu(a' \cap  \psi_n^{inv}(Eg) )\\
\text{by~\eqref{eq:Cond-psic}} &\sim_{1+\eps}&
 \dfrac{ 1 }{ \mu^n(Eg)}  \sum_{a' \in\alpha_{n'}, a' \subset u} \mu(a') \mu(\{ x \in X : \psi_n(x) \in Eg\})\\
 &=& \dfrac{\mu(u)}{ \mu^n(Eg)}  \mu^n(Eg) = \mu(u).
\end{eqnarray*}
A similar, but more extended, computation on $u'$ gives for $n' \geq n_0$:
\begin{eqnarray*}
m_{n+1}^{Eg}(u')
 &=& \dfrac{1}{\mu^{n+1}(Eg)} \sum_{a\in\alpha_{n+1}, a \subset u'} \mu(a \cap  \psi_{n+1}^{inv}(Eg)) \\
 &=& \dfrac{ 1 }{ \mu^{n+1}(Eg) }  \sum_{\stackrel{Tb = a' \in \alpha_{n'}} {b \in \alpha_{n'+1}, b \subset u'}} \mu(b \cap  \psi_{n+1}^{inv}(Eg) )\\
 \begin{array}{c}\text{by the Gibbs}\\
  \text{property } \eqref{eq:phi-holder}
 \end{array}
 &\sim_{1+\eps}& \dfrac{ 1 }{ \mu^{n+1}(Eg) }  \sum_{\stackrel{Tb = a' \in \alpha_{n'}} {b \in \alpha_{n'+1}, b \subset u'}}
 e^{-\varphi(\xi_b^1)} \mu(\{x \in a' :  \psi_n(x) \psi(T^{-1}_b(x)) \in Eg\})\\
 \text{by~\eqref{eq:IMT}} &\sim_{1+\eps}& \dfrac{ 1 }{ \mu^{n+1}(Eg) }  \sum_{\stackrel{Tb = a' \in \alpha_{n'}} {b \in \alpha_{n'+1}, b \subset u'}}
 \frac{\mu(b)}{\mu(a')} \ \mu(\{x \in a' :  \psi_n(x) \in Eg \psi(T^{-1}_b(x))^{-1}\}  )\\
 \begin{array}{c}
\text{by Lemma~\ref{lem:hd}}\\
\text{with } g_1 = e
 \end{array}
 &\sim_{1+\eps}& \dfrac{ 1 }{ \mu^{n+1}(Eg) }  \sum_{\stackrel{Tb = a' \in \alpha_{n'}} {b \in \alpha_{n'+1}, b \subset u'}}
 \frac{\mu(b)}{\mu(a')} \ \mu(\{x \in a' :  \psi_n(x) \in Eg\})\\
 \text{by~\eqref{eq:Cond-psic}} &\sim_{1+\eps}& \dfrac{ 1 }{ \mu^{n+1}(Eg) }  \sum_{\stackrel{Tb = a' \in \alpha_{n'}} {b \in \alpha_{n'+1}, b \subset u'}} \mu(b) \ \mu(\{x \in X :  \psi_n(x) \in Eg\})\\
 &=& \dfrac{ \mu(u') }{ \mu^{n+1}(Eg) } \ \mu^n(Eg).
\end{eqnarray*}
Putting the two previous computations together, we obtain the quotient:
\[
\frac{m_{n+1}^{Eg}(u')}{m_n^{Eg}(u)} \sim_{(1+\eps)^4}
\frac{\mu(u')}{\mu(u)} \, \dfrac{ \mu^n(Eg)  }{ \mu^{n+1}(Eg) }.
\]
From Lemma~\ref{lem:m_nu} we derive that $m_{n+1}^{Eg}(u') \to \mu(u')$ and  $m_n^{Eg}(u) \to \mu(u)$ as $n \to\infty$.
Since $\eps > 0$ was arbitrary, we have $\dfrac{ \mu^{n}(Eg)  }{ \mu^{n+1}(Eg) } \to 1$ as $n \to \infty$.
The same result can be obtained for $2$ using some $u'' \in \alpha_{k+2}$, and
$T^{2}(u'') = u$, with $e \in \psi_{2}(u'')$.
Therefore
$$
\lim_{n\to\infty} \frac{\mu^{n+2}(Eg)}{\mu^{n+1}(Eg)}
=
\lim_{n\to\infty} \frac{\mu^{n+2}(Eg)}{\mu^n(Eg)}
\frac{\mu^n(Eg)}{\mu^{n+1}(Eg)} = 1,
$$
as required.
\end{proof}

\section{Proof of Theorem~\ref{thm:dc}}
\label{sec:dc}

We first obtain  a version of Lemma~\ref{firstnallemma61ds}
for continuous groups. Let $m_n^{Eg}$ as defined in~\eqref{eq:mnc}.

\begin{lemma}\label{firstnallemma61dsCont}Suppose that $G$ is continuous and assume \eqref{eq:Cond-psic}.
For every $\eps > 0$, $k \in \N$ and $x_0 \in X$,
there are $n > \ell > k$ such that if $u$ is the $\ell$-cylinder containing $x_0$, we have
 \[
 \left|m_n^{Eg}(u)-\frac{ \mu^{n-k}( Egg_1^{-1}) }{\mu^n(Eg)} \, \mu(u) \right| < \eps,
 \]
 where $g_1 = \psi_k(x_0)$.
\end{lemma}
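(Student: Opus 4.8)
The plan is to mirror the discrete argument of Lemma~\ref{firstnallemma61ds}, replacing the exact identity $\psi_n(x)=\psi_{n-k}(T^k x)\,\psi_k(u)$ (valid when $\psi$ is locally constant) by its continuous surrogate: on a sufficiently fine cylinder $u$ containing $x_0$ the factor $\psi_k(x)$ is uniformly close to $g_1=\psi_k(x_0)$, and the error this causes in the target set $Eg$ is absorbed by the boundary estimate of Lemma~\ref{lem:hd}. I would first record, directly from \eqref{eq:mnc}, that $m_n^{Eg}(u)=\frac{1}{\mu^n(Eg)}\,\mu(\{x\in u:\psi_n(x)\in Eg\})$ (terms with $a\not\subset u$ vanish, and for $a\subset u$ the weight $\mu(a\cap\psi_n^{inv}(Eg))/\mu(a)$ cancels against $\int_a 1_u\,d\mu=\mu(a)$).

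Write $w:=T^k(u)$, an $(\ell-k)$-cylinder, and let $T^{-k}_u:w\to u$ be the inverse branch. Using the single-branch change of variables $\int_u v\,d\mu=\int_w e^{\varphi_k(T^{-k}_u y)}v(T^{-k}_u y)\,d\mu(y)$ (this is just $\int L^k(1_u v)\,d\mu$) together with the cocycle identity $\psi_n(T^{-k}_u y)=\psi_{n-k}(y)\,\psi_k(T^{-k}_u y)$, I would rewrite
\[
\mu(\{x\in u:\psi_n(x)\in Eg\})=\int_w e^{\varphi_k(T^{-k}_u y)}\,1_{Eg\,\psi_k(T^{-k}_u y)^{-1}}(\psi_{n-k}(y))\,d\mu(y).
\]
Now fix $\eps>0$ and choose $n_0$ large; apply \eqref{eq:Cond-psic} to the set $Egg_1^{-1}$ to obtain $n_1$, and for a large iterate $m=n-k$ let $n'=n'(m)\in\{n_0,\dots,n_1\}$ be the level it provides, setting $\ell:=n'+k$ so that $w\in\alpha_{n'}$. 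Then I carry out four successive approximations. (i) By Hölder continuity \eqref{eq:phi-holder} the weight is within $1+\lambda_0^{\ell-k}\le 1+\eps$ of $e^{\varphi_k(x_0)}$, so it can be pulled out of the integral, leaving $\mu(\{y\in w:\psi_{n-k}(y)\in Eg\,\psi_k(T^{-k}_u y)^{-1}\})$. (ii) Lemma~\ref{lem:hd}, applied with $r=k$, high-level cylinder $u$, low-level cylinder $w$, and the set $Egg_1^{-1}$ in the role of $Eg$ (so that its twisted set $Egg_1^{-1}g_1\psi_k(T^{-k}_u y)^{-1}=Eg\,\psi_k(T^{-k}_u y)^{-1}$ is exactly what appears), together with Remark~\ref{rem:hd} to make the estimate uniform over the finitely many levels $n'\in\{n_0,\dots,n_1\}$, replaces this by $\mu(\{y\in w:\psi_{n-k}(y)\in Egg_1^{-1}\})$ up to a factor $1+\eps$. (iii) Since $w\in\alpha_{n'}$, \eqref{eq:Cond-psic} replaces this by $\mu(w)\,\mu^{n-k}(Egg_1^{-1})$ up to $1+\eps$. (iv) Finally \eqref{eq:IMT} with $b=u$ gives $\mu(w)=e^{-\varphi_k(\xi)}\mu(u)$ for some $\xi\in u$, and Hölder again yields $e^{\varphi_k(x_0)}\mu(w)\sim_{1+\eps}\mu(u)$. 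Chaining these, $\mu(\{x\in u:\psi_n(x)\in Eg\})\sim_{(1+\eps)^4}\mu(u)\,\mu^{n-k}(Egg_1^{-1})$, whence $m_n^{Eg}(u)\sim_{(1+\eps)^4}\frac{\mu^{n-k}(Egg_1^{-1})}{\mu^n(Eg)}\mu(u)$; since $m_n^{Eg}(u)\le 1$ and $\eps>0$ is arbitrary, the additive bound $<\eps$ follows after relabeling.

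I expect the main obstacle to be the bookkeeping of the quantifiers rather than any single estimate: unlike the discrete case, the cylinder level $\ell$ cannot be fixed in advance, because the level $n'$ at which \eqref{eq:Cond-psic} can be invoked depends on the iterate $m=n-k$. The resolution is to choose $n$ (hence $m$ and $n'$) first and only then set $\ell=n'+k$ and take $u$ to be the $\ell$-cylinder containing $x_0$; one must simultaneously ensure $n_0$ is large enough that (a) $\lambda_0^{\ell-k}\le\lambda_0^{n_0}<\eps$ for steps (i) and (iv), and (b) Lemma~\ref{lem:hd}, via Remark~\ref{rem:hd}, applies uniformly to all cylinders of level in $\{n_0,\dots,n_1\}$, which forces the radius $\eta$ of the boundary neighborhood $\partial_{2\eta}Eg$ to be chosen before $n_1$. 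Taking $n>n_1+k$ then guarantees $n>\ell>k$, as required. Positivity of $\mu^n(Eg)$ and $\mu^{n-k}(Egg_1^{-1})$ for $n$ large is supplied by Lemma~\ref{lemma:psitive mu(e)}.
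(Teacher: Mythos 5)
Your proof is correct and follows essentially the same route as the paper's: the same change of variables via the inverse branch of $T^k$, the same distortion controls \eqref{eq:phi-holder} and \eqref{eq:IMT}, and the same use of Lemma~\ref{lem:hd} (applied to the set $Egg_1^{-1}$, with Remark~\ref{rem:hd}) to trade the twist $\psi_k(T^{-k}_u y)^{-1}$ for $g_1^{-1}$. The only cosmetic difference is that you invoke \eqref{eq:Cond-psic} directly at level $n'$ (so you must pick $n$ before $\ell$), whereas the paper routes that step through Lemma~\ref{lem:m_nu} applied to $T^k(u)$ and can fix $\ell$ first; both orderings satisfy the purely existential statement.
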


\begin{proof}
Let $g \in G$, $k \in \N$ and $x_0 \in X$ be given, and set $g_1 = \psi_k(x_0)$. Let $\eps > 0$ be arbitrary and take $\eps' \in (0,\eps)$ so that $(1+\eps')^5 < 1+\eps$.
Choose $\ell > k$ so large, that $\psi_k$ varies so little on the
$\ell$-cylinder $u$ that Lemma~\ref{lem:hd} applies with $n_0 = \ell$ for $\eps'$.
Choose $n > \ell$ such that $\lambda^{n-\ell} < \eps'$.
%such that $\lambda^{n-\ell} < \eps'$
Then
\begin{align*}
    m_n^{Eg}(u) =\ &
     \dfrac{1}{\mu^n(Eg)} \sum_{\stackrel{a\in\alpha_n, a \subset u}{\psi_n(a) \cap Eg \neq \emptyset} }\frac{\mu(a \cap \psi_n^{inv}(Eg))}{\mu(a)} \, \mu(a) \\
    \text{by } \eqref{eq:phi-holder} \sim_{1+\eps'} \ & \dfrac{ \mu^{n-k}(Egg_1^{-1}) }{ \mu^n(Eg) } \dfrac{1}{ \mu^{n-k}(Egg_1^{-1}) }  \sum_{\stackrel {a\in\alpha_n, a \subset u}{\psi_n(a) \cap Eg \neq \emptyset} } \frac{\mu(a \cap \psi_n^{inv}(Eg))}{\mu(a)} \, e^{\varphi_k(\xi^k_u)} \mu(a')
    \\
    \sim_{1+\lambda^{n-k}}\ & \dfrac{ \mu^{n-k}(Egg_1^{-1}) }{ \mu^n(Eg) } \dfrac{1}{ \mu^{n-k}(Egg_1^{-1}) }  \sum_{\stackrel{a \subset u, a' = T^k(a) \in \alpha_{n-k}}{\psi_n(a) \cap Eg \neq \emptyset}}
    \!\!\!\! \!\!\!\! \frac{\mu(a' \cap T^k(\psi_n^{inv}(Eg)))}{\mu(a')} \, e^{\varphi_k(\xi^k_u)} \mu(a'),
\end{align*}
where in the last line we used the H\"older distortion control \eqref{eq:phi-holder}, \eqref{eq:IMT} and the fact that $T^k:u \to u' \in \alpha_{n-k}$ is a bijection.

Take now $n > n_2$ as in Lemma~\ref{lem:hd}. Then
\begin{eqnarray*}
 \mu(a' \cap T^k(\psi_n^{inv}(Eg))) &=&  \mu(\{x \in a' : \psi_n(T_u^{-k}x ) \in Eg)\}) \\
 &=&  \mu(\{x \in a' : \psi_{n-k}(x) \in Egg_1^{-1} \,  g_1 \psi_k(T_u^{-k}x )^{-1}\}) \\
 &\sim_{1+\eps'} & \mu(\{x \in a' : \psi_{n-k}(x) \in Egg_1^{-1}\}) \\
 &=& \mu(a' \cap \psi_{n-k}^{inv}(Egg_1^{-1})).
\end{eqnarray*}

Choose $n > n_1 \geq n' > n_0$ according to \eqref{eq:Cond-psic}
where $n_0$ is such that $\lambda^{n_0-k} < \eps'$,
and also that $m^{Eg}_n(u') \sim_{1+\eps'} \mu(u')$ by  Lemma~\ref{lem:m_nu}.
Then, using H\"older continuity of $\varphi$ as in \eqref{eq:phi-holder}, we estimate
\begin{align*}
 \dfrac{1}{ \mu^{n-k}(Egg_1^{-1}) } & \sum_{\stackrel{a' \in \alpha_{n-k}}{\psi_{n-k}(a')=g\psi_k(u)^{-1}}}
 \frac{\mu(a' \cap \psi_{n-k}(Egg_1^{-1}))}{\mu(a')} \, e^{\varphi_k(\xi^k_u)} \, \mu(a') \\
& \sim_{1+\lambda^{n'-k}}\
   \  e^{\varphi_k(\xi^k_u)} \
 m^{Egg_1^{-1}}_{n-k}(u') \\
\text{by Lemma~\ref{lem:m_nu}} & \sim_{1+\eps'}\
   e^{\varphi_k(\xi^k_u)} \ \mu(u') \\
   \text{by } \eqref{eq:IMT} \qquad &=  \mu(u).
\end{align*}
Since  $(1+\eps')^5 < 1+\eps$, we conclude
$m_n^{Eg}(u) \sim_{1+\eps} \dfrac{ \mu^{n-k}(Egg_1^{-1}) }{ \mu^n(Eg) }  \mu(u)$.
As $\varepsilon>0$ was arbitrary, the conclusion follows.
\end{proof}

Using Lemma~\ref{lem:m_nu} together with Lemma~\ref{firstnallemma61ds}
or  Lemma~\ref{firstnallemma61dsCont}, we can complete\\

\begin{proof}[Proof of Theorem~\ref{thm:dc}]
 Item (a). By Lemma~\ref{lem:m_nu} and Lemma~\ref{firstnallemma61ds} with $g=e$ and $g_1=\psi_k(u)^{-1}$,
 $\frac{\mu^{n-k}(g_1)}{\mu^n(e)}\to 1$, as $n\to\infty$. So, $\frac{\mu^{n}(g_1)}{\mu^{n+k}(e)}\to 1$, as $n\to\infty$. Also, by Lemma~\ref{firstnallemma61ds} with $e=g=\psi_k(u)^{-1}$, $\frac{\mu^{n+k}(e)}{\mu^{n}(e)}\to 1$, as $n\to\infty$. Thus,
 \begin{align*}
  \frac{\mu^{n}(g_1)}{\mu^n(e)}=\frac{\mu^{n}(g_1)}{\mu^{n+k}(e)}\frac{\mu^{n+k}(e)}{\mu^{n}(e)}\to 1,
 \end{align*}
as required.

Item (b) follows similarly, with the use of Lemma~\ref{firstnallemma61dsCont}
instead of Lemma~\ref{firstnallemma61ds}.
\end{proof}

\section{Verifying~\eqref{eq:Cond-psi} and~\eqref{eq:Cond-psic} together
with~\eqref{eq:contmix}
for group extensions of Gibbs Markov maps}
\label{sec:verif}

 \subsection{Verifying~\eqref{eq:Cond-psi} for finite groups}

 Recall that $m_G$ is the left Haar measure on the group $G$. If $G$ is finite, then $m_G$ can be normalized
to a probability measure, namely $m_G(g) = \frac{1}{\#G}$ for all $g \in G$.
In this case, \eqref{eq:Cond-psi} is a direct consequence of mixing of the skew-product
$T_\psi(x,g) = (T(x), g \psi(x))$,
namely
\begin{align*}
\lim_{n\to\infty} \mu(b) \mu(\{x \in X: \psi_n(x) = g^{-1}\}) & =
 \lim_{n\to\infty} \int 1_{ b\times G} \cdot 1_{X \times \{ g^{-1} \} } \circ T_\psi^n \, d\hat\mu \\
 & = \hat\mu(b\times G) \hat\mu(X \times \{ g^{-1} \}),
\end{align*}
where the final equality follows by $T_\psi$-invariance of $\hat\mu = \mu \otimes m_G$.

\begin{lemma}
 If $G$ is finite, then the skew-product $T_\psi:X \times G \to X \times G$ is exponentially mixing w.r.t.\ $\mu \otimes m_G$.
\end{lemma}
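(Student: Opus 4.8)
The plan is to reduce exponential mixing of the skew-product $T_\psi$ to a spectral-gap statement for a finite family of twisted transfer operators, one for each irreducible unitary representation of $G$. Since $G$ is finite, the Peter--Weyl theorem decomposes $L^2(G,m_G)$ into the matrix coefficients $\{(\rho)_{ij}\}$ of the finitely many irreducible unitary representations $\rho \in \hat G$, with the trivial representation $\rho_0 = \mathbf 1$ among them. Every function on $G$ is trivially Lipschitz in the fibre variable, so by bilinearity and density it suffices to establish exponential decay of the correlations $\int (f \otimes (\rho)_{ij}) \cdot \big( (h \otimes (\sigma)_{kl}) \circ T_\psi^n\big)\, d\hat\mu$ for product observables with $f,h$ Hölder on $X$ and $\rho,\sigma \in \hat G$; summing over the finitely many representations then yields the statement for general Hölder observables.

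Next I would introduce, for each $\rho \in \hat G$ of dimension $d_\rho$, the twisted transfer operator $L_\rho$ acting on $\C^{d_\rho}$-valued Hölder functions on $X$ by $(L_\rho v)(x) = \sum_{Ty = x} e^{\varphi(y)} \rho(\psi(y))\, v(y)$. A direct computation, using $(\rho)_{ij}(g\psi_n(x)) = \sum_m (\rho)_{im}(g)\,(\rho)_{mj}(\psi_n(x))$ and Schur orthogonality over $G$, expresses the $n$-th correlation above in terms of iterates $L_\rho^n$ paired in $L^2(\mu)$, so that its asymptotics are governed by the $L_\rho^n$. For the trivial representation, $L_{\rho_0}$ is the usual Ruelle transfer operator $L$ of the GM map $T$, which—by the BIP and full-branch assumptions—is quasi-compact on the Lipschitz space with a simple leading eigenvalue $1$ (the constant eigenfunction) and a spectral gap; this contributes the product $\hat\mu(\,\cdot\,)\,\hat\mu(\,\cdot\,)$ plus an exponentially small remainder, and recovers the limit value appearing in the displayed computation just before the lemma.

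The decisive step, and the main obstacle, is to show that for every nontrivial $\rho$ the spectral radius of $L_\rho$ is strictly less than $1$. Because $\rho$ is unitary we have $\|\rho(\psi(y))\| = 1$, so $L_\rho$ obeys the same Lasota--Yorke / Doeblin--Fortet inequality as $L$ and is therefore quasi-compact with essential spectral radius at most $\beta < 1$; its spectrum of modulus exceeding $\beta$ consists of finitely many eigenvalues. It remains to exclude eigenvalues on the unit circle. I would argue by the standard contradiction: an eigenrelation $L_\rho v = \lambda v$ with $|\lambda| = 1$ forces, via the pointwise domination $|L_\rho v| \le L|v|$ together with the strict positivity and mixing of $L$, that $|v|$ is constant and that $x \mapsto \lambda^{-1}\rho(\psi(x))$ is a unitary coboundary; this amounts to $\rho \circ \psi$ being cohomologous to a constant multiple of the identity, which under the aperiodicity of the cocycle $\psi$ that underlies the mixing of $T_\psi$ is impossible for $\rho \ne \rho_0$. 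Ruling out these peripheral eigenvalues is precisely where the finite-group structure and the non-degeneracy hypothesis on $\psi$ enter, and it is technically the most delicate point.

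Finally, since $\hat G$ is finite, quasi-compactness together with the absence of peripheral spectrum gives a uniform bound $\mathrm{spr}(L_\rho) \le \theta < 1$ for all $\rho \ne \rho_0$, and hence $\|L_\rho^n\| \le C\theta^n$. Collecting the trivial contribution (the product of integrals) with the $O(\theta^n)$ contributions from the nontrivially twisted operators, and summing over the finitely many matrix coefficients, yields $\big|\int \phi \cdot (\zeta \circ T_\psi^n)\, d\hat\mu - \hat\mu(\phi)\,\hat\mu(\zeta)\big| \le C\,\theta^n\, \|\phi\|\,\|\zeta\|$ for Hölder $\phi,\zeta$, which is the asserted exponential mixing.
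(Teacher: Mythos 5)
Your argument is valid in outline but takes a genuinely different route from the paper's. The paper uses no representation theory at all: it induces on the single fibre $X\times\{e\}$, considering the first return map of $T_\psi$ with return time $\tau(x)=\min\{j>0:\psi_j(x)=e\}$, shows by a pigeonhole-plus-distortion argument that $\tau$ has exponential tails (from any cylinder on which $\psi_m\equiv g^{-1}$, a definite proportion of points returns to the identity fibre within $\#G$ further steps), and then invokes Young's tower theorem to conclude exponential mixing. Your route --- Peter--Weyl decomposition of $L^2(G)$, twisted transfer operators $L_\rho$, quasi-compactness, and exclusion of peripheral spectrum --- is the classical spectral approach, and is essentially the machinery the paper itself deploys in Section~\ref{subsec:ab} for LCA groups via the perturbed operators $L_\chi$ and the aperiodicity condition~\eqref{eq:app}. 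What each buys: the inducing argument is shorter and avoids spectral analysis of twisted operators, at the cost of implicitly assuming that every $g\in G$ is hit by $\psi_n$ with positive probability for some $n<\#G$; your argument is more systematic and would extend to compact nonabelian $G$, but the step you yourself flag as delicate --- ruling out unimodular eigenvalues of $L_\rho$ for $\rho\neq\rho_0$ --- genuinely requires a non-degeneracy hypothesis on the cocycle, and your appeal to ``the aperiodicity that underlies the mixing of $T_\psi$'' is circular as written, since the mixing is what the lemma is meant to establish. To close this you would need to assume explicitly that the twisted cohomological equation $\rho(\psi)\,v=\lambda\,v\circ T$ has no nontrivial solution for $\rho\neq\rho_0$ (the finite-group analogue of~\eqref{eq:app}). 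Note, however, that the paper's own proof hides an equivalent assumption in the claim that every $R_g$ has positive measure, so on this point the two proofs are on the same footing.
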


\begin{proof}
Take the unit element $e \in G$ and consider the first return map $T_\psi^\tau(x,e)$ to $X \times \{ e \}$, where $\tau(x) = \min\{j > 0 : \psi_j(x) = e\}$.
This is a Gibbs-Markov map with distortion constant $C/(1-\lambda)$, where $C$ is the Gibbs (distortion) constant of $T$ and $\lambda \in (0,1)$ is such that
the expansion $\frac{d\mu \circ T_\psi}{d\mu} \geq 1/\lambda$, i.e., the potential $\varphi =  \frac{d\mu}{d\mu\circ T} \leq \log \lambda$.

The return map $T^\tau$ is defined $\mu$-a.e., which follows from the stronger statement that the return time $\tau$ has exponential tails:
There are $\lambda' \in (0,1)$ and $C' > 0$ such that
\begin{equation}\label{eq:exp-tails}
 \mu(\{x \in X : \tau(x) > n\}) \leq C' {\lambda'}^n \qquad \text{ for all } n \in \N.
\end{equation}
Then the mixing of $T_\psi$ follows directly from Young's result \cite[Theorem 2.II(b)]{young}
on the mixing of the Young tower.

It remains to prove \eqref{eq:exp-tails}. Since $N := \#G < \infty$, for every $g \in G$
there is $n < N$ such that $\mu(R_g) > 0$ for $R_g = \{ x \in X : \psi_n(x) = g \}$.
The distortion bound of $T_\psi^m$ (uniform over $m \in \N$) gives that
for all $g \in G$, $m \in \N$ and $a \in \alpha_m$ such that $\psi_m(a) = g^{-1}$,
we have
\[
\frac{\mu(\{x \in a : T^m(x) \in R_g\})}{ \mu(a) } \geq \frac1C \mu(R_g).
\]
Note that if $x \in a$ (so $\psi_m(x) = g^{-1}$) and $T_\psi^m(x) \in R_g$, then $\psi_{m+n}(x) = g \cdot g^{-1} = e$, i.e., $\tau(x) \leq m+N$.

Let $u = \min\{\frac1C \mu(R_g) : g \in G\}$. Then, for all $r \in \N$,
\[
\mu(\{x \in X : \tau(x) >r + N\}) \leq (1-u) \mu(\{x \in X : \tau(x) \geq r\}).
\]
That is, for each $r \in \N$, the set  of points $x$ with $\tau(x) \geq r$ is so that the proportion of points
with $\tau(x) \geq r+N$ is less than $(1-u)$.
Thus,~\eqref{eq:exp-tails} holds with $\lambda' = (1-u)^{1/N}$.
\end{proof}
\begin{remark}
 Regarding compact group extensions we resume to a remark.
 As far as we know, mixing (in fact quantitative mixing) is known
 for semisimple compact group extensions of shifts of finite type.
 The main reference is Dolgopyat~\cite{Dolgopyat02}. While it seems very plausible that this result can be generalized to compact group extensions of GM maps, we could not find a work that does this. 
 Provided mixing for compact groups of GM maps,~\eqref{eq:Cond-psic} is an immediate consequence.
\end{remark}

\subsection{Verifying~\eqref{eq:Cond-psi} and~\eqref{eq:Cond-psic} together
with~\eqref{eq:contmix} for  certain locally compact abelian groups}\label{subsec:ab}

Throughout this section, we assume that the group $G$ is locally compact abelian (LCA), second countable and, further that: 
\begin{itemize}
 \item[(a)] if $G$ is discrete we require that $G$ is finitely
generated, that is $G\cong \Z^d\times F$, for some finite abelian group $F$;
\item[(b)]if $G$ is continuous, we assume that $G$ is compactly generated, that is if there exists a compact subset $K\subset G$ such that $\langle K\rangle=G$.
\end{itemize}
In case (b), we have $G\cong \R^d\times \Z^m\times K$, where $K$ is a compact abelian group, see~\cite[Chapter 2]{HR}.

Some facts established in~\cite{AD01,AD02} will be instrumental for the proof. A summary of assumptions on $\psi$ is provided at the end of this subsection.

We first recall the terminology in~\cite[Chapter VII]{Katzn}
and~\cite{Fol}.
The dual $\hat G$ is the set of characters, that is continuous homomorphisms $\chi:G\to \S^1$, where $\S^1$ is the multiplicative group of complex number of modulus $1$.
Write $\chi(g)$ as $\langle \chi, g\rangle$ or $e^{i\chi g}$, and so
$|\chi(g)|=1$ and $\chi(g+g')=\chi(g)\chi(g')$ (see~\cite[Chapter 4]{Fol},~\cite[Chapter VII]{Katzn}).
Here, we mean $\S^1=\mathbb{T}$, when we speak of the set of characters, and in the notation $e^{i\chi g}$ we also think of $\chi\in \S^1$, where $\S^1=\{z\in \mathbb{C}:|z|=1\}$.

\subsubsection{Transfer operator along with its perturbed version}
\label{trop}
Given $v:X\to\R$, let
\[
D_av=\sup_{x,x'\in a,\,x\neq x'}\frac{ |v(x)-v(x')| }{ d_\beta(x,x')},\qquad |v|_\beta=\sup_{a\in\{a\}}D_a v.
\]
The space $\cB_\beta\subset L^\infty(\mu)$ consisting of the functions
$v:X\to\R$ such that $|v|_\beta<\infty$ with norm
$\|v\|_{\cB_\beta}=\|v\|_{L^\infty(\mu)}+|v|_\beta<\infty$ is a Banach space. It is known that the transfer operator $L: L^1(\mu)\to L^1(\mu)$, $\int_X L v w\, d\mu_Y=\int_Y v w\circ T\, d\mu_Y$ has a spectral gap in $\cB_\beta$ (see,~\cite[Chapter 4]{Aaronson}). In particular, this means that $1$ is a simple eigenvalue, isolated in
the spectrum of $L$.
It is well known (see~\cite[Chapter 4]{Aaronson}) that
for $v\in\cB_\beta$,
\begin{equation}\label{eq:op}
 L^n v=\int_X v\, d\mu+Q^n v,\quad \|Q^n\|_{\cB_\beta}\le C\delta^n,
 \text{ for some } C>0,\delta\in (0,1).
\end{equation}

Define the perturbed transfer operator $L_{\chi} v= L(\chi\circ\psi\, v), v\in L^1(\mu)$ and consider its restriction to $\cB_\beta$:
\[
 L_{\chi} v= L(\chi\circ\psi\, v),\quad \chi\in\hat G, v\in\cB_\beta.
\]
The Fourier transform of $\psi_n$ w.r.t.\  $\mu$ is given by $\mathbb E_\mu(\chi\circ\psi_n\, 1)=\int_X L_{\chi}^n 1\, d\mu$.
It is known~\cite{AD01, AD02} that
\begin{align}\label{psiHcsT}
 \|L_{\chi}-L\|_{\cB_\beta}\to 0, \text{ as } \chi\to 0.
\end{align}
In~\cite{AD01, AD02}, to obtain the previous displayed continuity property of $L_\chi$,
they assume that the cocycle $\psi$ is  H\"older on each $a\in\alpha$ and that $\sup_{a\in\alpha}|D_a\psi|<\infty$. However, the proof goes similarly
if one only assumes that $\psi$ is 
H\"older on each $a$ and $\psi\in L^\gamma(\mu)$ for some $\gamma>0$.

Given
$
D_a(\psi)=\sup_{x,y\in a}\dfrac{d_G(\psi(x),\psi(y))}{d_\beta(x,y)},
$
one says that $\psi$ is 
H\"older on the partition element $a$, 
if $D_a(\psi)<C_a$, for some constant that, possibly, depends on $a$. 
Assuming that $\psi$ is 
H\"older on each $a$ and $\psi\in L^\gamma(\mu)$ for some $\gamma>0$, the proof of~\eqref{psiHcsT} goes similarly that the one in~\cite{AD01, AD02}.
For a precise reference where this was done (with the purpose of obtaining a  precise continuity estimates) see~\cite[Lemma 5.2]{MelTer17} for $G=\R$ and~\cite[Proof of Lemma 6.2]{MTjmd}
for the case $G=\mathbb T^d$. \\

To ensure that the eigenvalues of $L_{\chi}$
do not lie on the unit circle $\mathbb{S}^1=\{z\in\C: |z|=1\}$ (unless in trivial case),
we need an aperiodicity assumption, formulated as in~\cite[Section 3]{AD01}.
\begin{description}[style=multiline,labelwidth=3cm,align=parright, leftmargin=0pt, itemindent=31pt]
 \item[(Ap)]\makeatletter\def\@currentlabel{Ap}\makeatother\label{eq:app}
For each $\chi\in\hat G$, we assume that
 $\psi$ does not satisfy the cohomological equation. That is if
 $
  \chi\circ\psi(x)=\lambda\frac{v(x)}{v\circ T(x)}$ for $\mu$- a.e. $x$,
with $v:X\to \T$, $\lambda\in \T$,
then $v\equiv 1$ and $\lambda=1$.
\end{description}

\medskip

Under \eqref{eq:app}, it is shown in~\cite{AD01}
that for all $\chi$, outside a neighborhood of $0$, $\|L_\chi^n\|_{\cB_\beta}\le \delta^n$, for some $\delta\in(0,1)$.

 In short, as established in~\cite{AD01}, there exists $\eps>0$ so that for 
$\chi\in  B_{\hat G}(0,\eps)$
\[
 L_{\chi} ^n v=\lambda_\chi^n\Pi_\chi+ Q_\chi^n,
\]
where $\{\lambda_\chi\}$ is a family of leading eigenvalues with $\lambda_0=1$, 
$\{\Pi_\chi\}$ is the family of eigenprojection associated with $\{\lambda_\chi\}$,
where $\Pi_0 v=\int_X v\,d\mu$,
and $\|Q_\chi^n\|_{\cB_\beta}\le \delta^n$, for some $\delta\in(0,1)$. The aperiodicity condition ~\eqref{eq:app}  implies that there exists $\delta_0\in (0,1)$ so that
\begin{equation*}
\|L_{\chi} ^n\|_{\cB_\beta}\le \delta_0^n, \text{ for all }|\chi|\ge \eps, \chi\in\text{a compact set of }\hat G.
\end{equation*}
If $\widehat G$ is compact, which is equivalent to  $G$ being discrete (see, for instance,~\cite[Proposition 4.5]{Fol}), then the restriction to a compact set is not needed.

Let $\delta_1=\min\{\delta_0,\delta_1\}$. Using the previous two displayed equations and recalling that $\cB_\beta\subset L^\infty(\mu)$ and that 
$\Pi_0 v=\int  v\, d\mu$,
\begin{align}\label{eq:Lpe}
\nonumber\int_X L_{\chi}^n v\, d\mu &=\lambda_\chi^n\int_X \Pi_{\chi} v\, d\mu+O(\delta_1^n)
=\lambda_\chi^n\int_X \Pi_0 v\, d\mu+\lambda_\chi^n\int_X (\Pi_{\chi}-\Pi_0) v\, d\mu
+O(\delta_1^n)\\
&=\lambda_\chi^n \int  v\, d\mu\left(1+\frac{\int_X (\Pi_{\chi}-\Pi_0) v\, d\mu}{\int  v\,d\mu}\right) +O(\delta_1^n),
\end{align}
 for any $v\in\cB_\beta$ with $\int  v\, d\mu\ne 0$.

\subsubsection{Inversion formula}
Let $d\chi$ be a Haar measure on $\hat G$. 
Via the inversion of the Fourier transform (see, for instance,~\cite[Theorem 4.22]{Fol}, \cite[Chapter VII]{Katzn}), given  a function $h\in L^1(G)$ w.r.t.\  to the Haar measure $m_G$,
\begin{align}\label{eq:invgen}
 h(y)=\int_{\hat G}\chi(-y)\hat{h}(\chi)\, d\chi,\quad \hat{h}(\chi)=\int_G \chi(g) h(g)\, dm_G(g).
\end{align}
If $G$ is discrete (similar to the well known case of $\Z$), take
$h=1_E$ in~\eqref{eq:invgen} for $E=\{\psi_n(x) =g\}$ with $n\in \N$, $x\in X$
and $g\in G$. 
Thus, for any $n\in\mathbb{n}$,  $x\in X$ and $g\in G$, $1_{\{\psi_n(x) -g=0\}}(y)=\int_{\hat G}\chi(-y)\widehat{1_E}(\chi)\, d\chi=\int_{\hat G}\chi(\psi_n(x)-g)\, d\chi$, where in the last equality we have used that $\chi(-y)$ is already captured in $\widehat{1_E}(\chi)$.
Multiplying with $v\in L^1(\mu)$ on both sides, applying $L^n$ on both sides and integrating over the space $X$,
\begin{equation}\label{eq:invab}
 \int_X v1_{\{x\in X:\psi_n(x) =g\}}\, d\mu=\int_{\hat G}\chi(-g)\int_X L_{\chi}^n v\, d\mu\, d\chi.
\end{equation}

As in~\cite[Section 4.3, Chapter 4]{Fol},~\cite[Section 4, Chapter VII]{Katzn}, the case of $G$ continuous but abelian is similar to Fourier analysis on $\R$.
If $G$ is continuous, let $E\subset G$  be an open bounded set  with $m_G(\partial E)=0$. 
Let $\Phi_\delta\in C^\infty (G)$ be  any compactly supported approximate identity and write $f_\delta=\Phi_\delta*1_E$
for the mollification of $1_E$. By a standard mollification argument, $\lim_{\delta\to 0}f_\delta(y)\to 1_E(y)$, $m_G$-a.e. $y\in G$.

Recall~\eqref{eq:invgen} and take $h=f_\delta(\psi_n(x)-g)$ for $n\in\N$, $x\in X$
and $g\in G$.
Let $\hat f_\delta\in\hat G$ be the Fourier transform of $f_\delta$, supported in a bounded set of $\hat G$ and note that
\begin{align}\label{eq:hatf0}
 \hat f_\delta(0)=\int_G  f_\delta(g)\, dm_G(g).
\end{align}
By~\eqref{eq:invgen},
$f_\delta(\psi_n(x)-g)=\int_{\hat G}\chi(-y)\hat f_\delta(\psi_n(x)-g)(\chi)\, d\chi=\int_{\hat G}\chi(\psi_n-g)\hat f_\delta(\chi)\, d\chi$ for any $x\in X$, any $n\in\N$ and any $g\in G$. Here we have used that $\chi(-y)$ is captured in $\hat f_\delta$.
Since $\lim_{\delta\to 0}f_\delta(y)\to 1_E(y)$, $m_G$-a.e. $y\in G$,
we have that for any $x\in X$, any $n\in\N$ and any $g\in G$,
$1_{\{\psi_n(x)\in g+E\}}=\lim_{\delta\to 0}\int_{\hat G}\chi(\psi_n(x)-g)\hat f_\delta(\chi)\, d\chi$.

Applying $v$ on both sides, applying $L^n$ on both sides  and integrating over the space $X$,
\begin{equation}\label{eq:invabcont}
 \int_X v1_{\{x\in X:\psi_n(x)\in g+E\}}\, d\mu=\lim_{\delta\to 0}\int_{\hat G}\chi(-g)\hat f_\delta(\chi)\int_X L_{\chi}^n v\, d\mu\, d\chi.
\end{equation}

Taking $v=1_b$ in~\eqref{eq:invabcont} for $b$ a cylinder and recalling~\eqref{eq:Lpe},
\begin{align}\label{eq:invabb}
 \mu(b&\cap\{x \in X : \psi_n(x) \in E+g\})=\lim_{\delta\to 0}\int_{\hat G}\chi(-g)
 \hat f_\delta(\chi)\int_X L_{\chi}^n 1_b\, d\mu\, d\chi\\
\nonumber &=\mu(b)\lim_{\delta\to 0}\int_{  B_{\hat G}(0,\eps)}\chi(-g)\hat f_\delta(\chi)\lambda_\chi^n \left(1+\frac{\int_X (\Pi_{\chi}-\Pi_0) 1_b\, d\mu}{\mu(b)}\right)\, d\chi+O(\delta_1^n).
\end{align}
A similar formula holds by taking $v=1_b$ in~\eqref{eq:invab}: the only difference is that $\mu(b\cap\{x \in X : \psi_n(x) \in E+g\})$ is replaced by $\mu(b\cap\{x \in X : \psi_n(x) = g\})$ (which is easier to manipulate due to the absence of $\hat f_\delta$).

\subsubsection{Use of (a slightly weaker) version of the symmetry assumption~\eqref{eq:S}}

While the term $O(\delta_1^n)$ in~\eqref{eq:invabb} is dominated by $\int_{B_{\hat G}(0,\eps)}\chi(g)\lambda_\chi^n\, d\xi$, for all $n$ large enough, we need to take a 
 closer look at the term containing $\int_X (\Pi_{\chi}-\Pi_0) 1_b\, d\mu$.
 In this sense, we recall a fact established in~\cite[p.38--39]{AD02}, which relies on the fact that $\lambda_\chi$ is real for all $\chi$.

  The reality of $\lambda_\chi$ is ensured under the assumption that $\psi$ is symmetric in a slightly weaker form of~\eqref{eq:S}, which goes exactly the same without requiring that $S$ is continuous.
  For convenience, we recall this assumption in the form needed in this section: There exists an invertible probability measure preserving transformation $S:X\to X$ so that 
\begin{align}\label{symmetryab}
 S\circ T=T\circ S\text{ and }\psi\circ S=-\psi.
\end{align}

The first part
of~\eqref{symmetryab}, together with the definition of $\varphi$ and the invariance of the measure $\mu$ implies that
$$
\varphi\circ S(y)  = \frac{d\mu}{d\mu \circ T} \circ S (y)= \frac{d\mu \circ S}{d\mu \circ S \circ T} (y) = \frac{d\mu}{d\mu \circ T}(y) = \varphi(y).
$$
Using this together with the second part of~\eqref{symmetryab}, it is shown in~\cite{AD02} that for $v\in\cB$, $L_{-\chi} v(x)= L_{\chi} (v\circ S^{-1})S(x)$.
Taking $v=v_\chi$, where $v_\chi$ is the eigenfunction associated with $\lambda_\chi$,
$L_{-\chi} (v_\chi\circ S)(x)=\lambda_\chi v_\chi\circ S(x)$. 
Recall that $L_\chi$ is continuous. It follows that $L_{-\chi}$ has $\lambda_\chi$
as an eigenvalue.
Since $\lambda_{-\chi}=\overline{\lambda_{\chi}}$, where $\overline{\lambda_{\chi}}$ is the complex conjugate of $\lambda_{\chi}$,
$\lambda_{\chi}$ is real.

Choose $\eta \in (0,\eps)$ small enough so that 
\begin{align}\label{eq:un1}
 u_n(\eta)=\int_{B_{\hat G}(0,\eta)}\lambda_\chi^n\, d\chi>0.
\end{align}

As established
in~\cite[p.39]{AD02},
\begin{equation}\label{eq:etaetapr}
u_n(\eta)=u_n(\eta')(1+o(1)),\text{ as } n\to\infty, \text{ for all }\eta,\eta'\in (0,\eps).
\end{equation}

\subsubsection{The argument for the verification of~\eqref{eq:Cond-psi} and~\eqref{eq:Cond-psic} via a `change of variable'.}

Let $(a_n)$ be any sequence so that $a_n\to\infty$ as $n\to\infty$.
We want to make sense of a change of variables that captures $\chi \to \chi/a_n$.
If $G$ is discrete (case (a)), we assumed that $G\cong\Z^d\times F$, so $\hat G\cong \T^d\times \hat F$, where $\hat F$ is the dual of the finite abelian group.
In this case, the change of coordinates is unproblematic and it goes as if operating on $\Z^d$.

If $G$ is continuous (case (b)) we need to exploit the linear structure of the space $\hat G$. 
Recall that in case (b), we assume that $G$ is compactly generated,
and thus  $G\cong\R^d\times\Z^m\times  K$, where $K$ is a compact abelian group.
 Pontryagin duality, see~\cite[Proposition 4.7]{Fol}, guarantees that $\widehat{G}\cong \widehat{\mathbb{R}^d}\times\widehat{\mathbb{Z}^m}\times \widehat{K}$. 
Note that $\widehat{\mathbb{R}^d}\cong\mathbb{R}^d$ and $\widehat{\mathbb{Z}^m}\cong \T^m$. As already mentioned if a group is compact then its dual is discrete
(again, see~\cite[Proposition 4.5]{Fol}). Thus, $\widehat{K}$ is discrete.
It is known that $\R^{2m}$ is a covering space of $\T^m$ (see, for  instance,~\cite[Theorem 53.3]{MuT}). Thus, the covering space of $\R^d\times\T^m$
is $\R^d\times\R^{2m}$. Since $\hat K$ is discrete, we can define the covering map
\begin{align*}
 p:\R^d\times \R^{2m}\to \R^d\times \T^m\times \{0_{\hat K}\}\text{ so that }p(x)=\chi\text{ and } p(0)=0.
\end{align*}
This is what we are after since we only want to capture a neighborhood of $\widehat{G}$ that contains $0$. We can also ensure that $p$ is a local isometry.
The measure $d\chi$ lifted to the cover is Lebesgue and we write $H:= \R^d\times \R^{2m}$
for the covering space.

Recalling~\eqref{eq:un1}
and using that $p$ is a local isometry (which means we can operate with the same $\eta$),
we obtain that for all $\eta<\eps$,
\begin{align}\label{eq:chvar}
u_n:=u_n(\eta) &= \int_{B_{H}(0,\eta a_n)}\lambda_{p(x)}^n dx
= \frac{1}{a_n^{d+2m}}\int_{B_{H}(0,\eta a_n)}\lambda_{p(x/a_n)}^n
\, dx.
\end{align}
We will use the same `change of variable' inside ~\eqref{eq:invabb}.\\

{\textbf{Step 1: Showing that} \begin{align*}
    \int_{ B_{\hat G}(0,\eps)}\chi(-g)\hat f_\delta(\chi)\lambda_\chi^n \left(1+\frac{\int_X (\Pi_{p(x)}-\Pi_0) 1_b\, d\mu}{\mu(b)}\right) \, d\chi=u_n\int_G f_\delta(g)\, dm_G(g)
\end{align*} \textbf{via the Dominated Convergence Theorem (DCT) and the `change of variables' used in~\eqref{eq:chvar}.}}\\

We will use the  `change of variables' in the sense of~\eqref{eq:chvar} inside the RHS in~\eqref{eq:invabb}.
We omit $\lim_{\delta\to 0}$ and use this at the very end of the argument. Recall that $\chi(-g):=e^{-i\chi g}\to 1$ as $\chi\to 0$.
We have that

  \begin{align*}
   \int_{ B_{\hat G}(0,\eps)}&\chi(-g)\hat f_\delta(\chi)\lambda_\chi^n \left(1+\frac{\int_X (\Pi_{p(x)}-\Pi_0) 1_b\, d\mu}{\mu(b)}\right) \, d\chi \\
   &= \int_{ B_H(0,\eps)} p(x)(-g)\hat f_\delta(p(x))\lambda_{p(x)}^n \left(1+\frac{\int_X (\Pi_{p(x)}-\Pi_0) 1_b\, d\mu}{\mu(b)}\right) \, dx \\
   &=\frac{1}{a_n^{d+2m}} \int_{ B_H(0,a_n\eps)}p(x/a_n)(-g)
   \hat f_\delta(p(x/a_n))\lambda_{p(x/a_n)}^n \left(1+\frac{\int_X (\Pi_{p(x/a_n)}-\Pi_0) 1_b\, d\mu}{\mu(b)}\right)  \, dx.
  \end{align*}
Since $\|\Pi_{\chi}-\Pi_0\|_{\cB_\beta}\to 0$, as $\chi\to 0$,
 $\|\Pi_{p(x/a_n)}-\Pi_0\|_{\cB_\beta} \to 0$ as $n\to\infty$.
Using that $B_\beta\subset L^\infty$,
  $\left|\frac{\int_X (\Pi_{p(x/a_n)}-\Pi_0) 1_b\, d\mu}{\mu(b)}\right|\to 0$, as $n\to\infty$. Also, $\hat f_\delta(p(x/a_n))\to \hat f_\delta(0)$ and
   $p(x/a_n)(-g) \to 1$, pointwise and it is bounded by $1$.
  
   Recall equation~\eqref{eq:etaetapr} and~\eqref{eq:chvar}. Since for all $\eta\in (0,\eps)$, $u_n=u_n(\eta)=\frac{1}{a_n^{d+2m}}\int_{B_{\hat G}(0,\eta a_n)}\lambda_\chi^n\, d\chi$, we can apply the DCT
   to 
   $$p(x/a_n)(g)\hat f(p(x/a_n))\left(1+\frac{\int_X (\Pi_{p(x/a_n)}-\Pi_0) 1_b\, d\mu}{\mu(b)}\right)$$
  and obtain that as $n\to\infty$,
   \begin{align}\label{eq:un}
   &\frac{1}{a_n^{d+2m}}\int_{ B_H(0,a_n\eps) }  p(x/a_n)(-g) \lambda_{p(x/a_n)}^n \left(1+\frac{\int_X (\Pi_{ p(x/a_n) }-\Pi_0) 1_b\, d\mu}{\mu(b)}\right) \, dx \\
 \nonumber &=\hat f_\delta(0)\frac{1}{a_n^{d+2m}}\int_{ B_H(0,a_n\eps)}\lambda_{p(x/a_n)}^n\, dx \left(1+o(1)\right)= u_n\hat f_\delta(0)\left(1+o(1)\right)=u_n(1+o(1))\int_G f_\delta(g)\, dm_G(g),
  \end{align}
where in the last equality we have used~\eqref{eq:chvar}
and~\eqref{eq:hatf0}.\\
  
  {\bf{Step 2: Using DCT in the reverse direction, again together with~\eqref{eq:invabb}.}}\\
  
Using again that $\left|\int_X (\Pi_{p(x/a_n)}-\Pi_0) 1\, d\mu\right|\to 0$,
$|\hat f(p(x/a_n))-\hat f(0)|\to 0$
and $|p(x/a_n)(-g)- 1|\to 0$ as $n \to \infty$, and applying DCT in the reverse direction,
\begin{align*}
&\frac{1}{a_n^{d+2m}}\int_{ B_H(0,a_n\eps)}\lambda_{p(x/a_n)}^n\, dx \, \left(1+o(1)\right)\\
&= \frac{1}{a_n^{d+2m}}\int_{ B_H(0,a_n\eps)} p(x/a_n)(-g)\,\hat f(p(x/a_n))\,\lambda_{p(x/a_n)}^n
\int_X\Pi_{p(x/a_n)} 1\,d\mu\, dx \, \left(1+o(1)\right).
\end{align*}
Recalling~\eqref{eq:invabb}, we get
\begin{align*}
&\mu(b\, \cap\,\{x \in X : \psi_n(x) \in E+g\})\\
&= \lim_{\delta\to 0}\frac{\mu(b)}{a_n^d}
\int_{ B_H(0,\eps a_n)} p(x/a_n)(-g)\,\hat f_\delta(p(x/a_n))\,\lambda_{p(x/a_n)}^n
\int_X \Pi_{p(x/a_n)} 1\,d\mu\, dx\, \left(1+o(1)\right)\\
&= \frac{\mu(b)}{a_n^d}\int_{ B_H(0,\eps a_n) } p(x/a_n)(-g)\,\hat f_\delta(p(x/a_n))\,\int_X L_{p(x/a_n)}^n 1\, d\mu\,dx \, \left(1+o(1)\right)\\
&=\mu(b)\, \int_{B_{\hat G}(0,\eps)} \chi(g)\,\hat f_\delta(\chi)
\int_X L_{\chi}^n 1\, d\mu\,d\chi\left(1+o(1)\right),
\end{align*}
where in the last line we have used the change of variables $\frac{x}{a_n}\to x$, recalled that $p(x)=\chi$ and moved back to the dual group $\hat G$.

By the first equality in~\eqref{eq:invabb} with $v=1_b$ replaced by $v=1_X$,
\begin{align}\label{eq:compE}
 \mu(\{x \in X : \psi_n(x) \in E+g\})=\lim_{\delta\to 0}\int_{B_{\hat G}(0,\eps)} \chi(g)\,\hat f_\delta(g)
\int_X L_{\chi}^n 1\, d\mu\,d\chi.
\end{align}

Assumption~\eqref{eq:Cond-psic} for compactly generated groups follows from the previous two displayed equations.
The verification of~\eqref{eq:Cond-psi} for finitely generated groups goes similarly using~\eqref{eq:invabb}; in this case, the argument is simplified by the absence of $\hat f_\delta$.

\subsection{Proof of Proposition~\ref{prop:comp}}

It follows from~\eqref{eq:compE},~\eqref{eq:invabb} and~\eqref{eq:un}
that
\begin{align}\label{eq:unE}
 \mu(\{x \in X : \psi_n(x) \in E+g\})= u_n(1+o(1)) \lim_{\delta\to 0}\int_G f_\delta(g)\, dm_G(g) = u_n(1+o(1)) m_G(E).
\end{align}
Working with any other open set $A$ in $G$ with $m_G(\partial A)=0$,

\begin{align*}
 \mu(\{x \in X : \psi_n(x) \in A+g\})= u_n(1+o(1)) \lim_{\delta\to 0}\int_G h_\delta(g)\, dm_G(g)= u_n(1+o(1)) m_G(A),
\end{align*}
where $\lim_{\delta\to 0}h_\delta(y)= 1_A(y)$. The conclusion of Proposition~\ref{prop:comp} follows from the the previous two displayed equations.

\subsubsection{Verifying~\eqref{eq:contmix} when $G$ is continuous}

Let $A, F, a$ be as in~\eqref{eq:contmix}. Let $g\in F$ and recall that
$T_\psi^n(x,g)=(T^n x, \psi_n(x)+g)$.
Note that
\begin{align*}\mu\otimes m_G \left((a \times F)\cap T_\psi^{-n}(X\times A)\right)
 &=\mu\otimes m_G \left(\{(x,g)\in a\times F: \psi_n(x) \in A-g\}\right)\\
 &=\mu\otimes m_G \left((a\times F)\cap \{(x,g) \in X\times G: \psi_n(x) \in A-g\}\right).
\end{align*}
Next, we use~\eqref{eq:invabcont} with $v$ replaced by $1_{a\times F}$,
this time integrating w.r.t. $\mu\otimes m_G$. Thus,
\begin{align*}
 \mu\otimes m_G \left((a \times F)\cap T_\psi^{-n}(X\times A)\right)
 &=\lim_{\delta\to 0}\int_{\hat G}\chi(-g)\hat f_\delta(\chi)\int L_\chi^n 1_a 1_F\, d\mu\, d_{m_G}\, d\chi\\
 &=m_G(F)\lim_{\delta\to 0}\int_{\hat G}\chi(-g)\hat f_\delta(\chi)\int L_\chi^n 1_a \, d\mu\,\, d\chi,
\end{align*}
where we have used that $1_{\{ \psi_n(x)\in A+g\} }=\lim_{\delta\to 0}\int_{\hat G}\chi(\psi_n-g)\hat f_\delta(\chi)\, d\chi$. (In particular, the set $E$ in~\eqref{eq:invabcont} is replaced by the set $A$.)

The rest goes word by word as the first part of the verification of~\eqref{eq:Cond-psic} in the previous section. In short we obtain
\begin{align*}
 &\int_{\hat G}\chi(g)\hat f_\delta(\chi)\int L_\chi^n 1_a 1_F\, d\mu\, dm_G\, d\chi\\
 &=\mu(a)m_G(F)\hat f(0)\frac{1}{a_n^{d+2m}}\int_{ B_H(0,a_n\eps)}\lambda_{p(x/a_n)}^n\, dx \left(1+o(1)\right)\\
 &=\mu(a)m_G(F)\hat f_\delta(0)u_n (1+o(1)),
\end{align*}
where in the last equalities we have used~\eqref{eq:un}. To conclude, recall that $\hat f_\delta(0)=\int_G f_\delta(g)\, dm_G(g)$ and take the limit $\delta\to0$.
The conclusion follows after recalling~\eqref{eq:unE}.

\subsubsection{Summary of the assumptions on $\psi$}

\begin{itemize}
 \item[(i)] If $G$ is compactly generated, $\psi$ is H\"older on each $a\in\alpha$. If  $G$ is finitely generated, we assume that $\psi$ is constant on each $a\in\alpha$ .
 \item[(ii)] To handle the $\beta$ seminorm of $\psi$, if $G$ is compactly generated, we further assume that $\psi\in L^\gamma(\mu)$ for some $\eta>0$ (see subsection~\ref{trop}). If $G$ is discrete and $\psi$ is constant on partition elements, 
 $|\psi|_\beta=0$.
  \item[(iii)] $\psi$ is aperiodic as in~\eqref{eq:app}.
 \item[(iv)] $\psi$ is symmetric in the sense of~\eqref{symmetryab}.
\end{itemize}

\section{Pressure function for finitely generated group extensions of GM maps}\label{sec:ap}  

For a countable group $G$,
Theorem \ref{thm:amen} says that $\lim_{n\to\infty}\frac{\mu(\{x\in X:\psi_{n+1}(x)=g\})}{\mu(\{x\in X:\psi_n(x)=g\})}=\lim_{n\to\infty}\frac{\mu^{n+1}(e)}{\mu^n(e)}=1$ when \eqref{eq:Cond-psi} holds.
It is known that $\lim_{n\to\infty}\frac{\mu^{n+1}(e)}{\mu^n(e)}=1$ implies that 
\begin{align}\label{eq:limit1}
    \lim_{n\to\infty} (\mu^n(e))^{1/n}=1.
\end{align}

Establishing~\eqref{eq:limit1} via a ratio limit theorem, it is rather a  difficult path.
Via a different strategy,  \cite{Ma13} and \cite{Ja15} established~\eqref{eq:limit1}, which allows for relating the pressure function for $T$ with the pressure function of $T_\psi$.

The  results in \cite{Ma13} and \cite{Ja15} are based on a symmetry assumption for the Gibbs Markov map $T:X\to X$, the potential $\varphi:X\to \mathbb{R}$ and  the cocycle $\psi:X\to G$.
Roughly speaking, 
this symmetry condition on $T$ and $\psi$ (compare with \eqref{symmetryab}) states that there exists a map $S:\alpha \to \alpha$ such that $S^2=\mbox{id}_{\alpha}$ and $\psi(S(x))=\psi(x)^{-1}$, $x\in X$, in such a way it allows to obtain a Gibbs condition relating the elements in some cylinder $a\in\alpha$ with the elements in its involution $S(a)\in\alpha$.
With these assumptions, \cite[Theorem 4.1]{Ma13} relates the thermodynamic formalism of a symmetric Gibbs Markov map and its extension by $\psi$, with the help of a sequence of i.i.d. random walks over the group $G$.

Inspired by this,  we present an expression for $\lim_{n\to\infty}(\mu^n(e))^{1/n}$ in a general setting, without the assumption of a symmetry condition for either $T$ or $\psi$. 
For this, we define measures $m_n$ over the group $G$, in a similar way to \cite{Ma13}, such that $(G,m_n)$ gives an i.i.d. random walk for each $n\in \mathbb{N}$.
In \cite{Ma13}, the symmetry assumption for $T$ and $\psi$  allowed to deduce that the measures $m_n$ defined in \cite{Ma13} are symmetric, i.e., $m_n(g)=m_n(g^{-1})$ for every $g\in G$.
The symmetry on these measures is crucial to apply Kesten's criteria (see \cite{Ke59}), which is a key part of the proof of \cite[Theorem 4.1]{Ma13}.
Instead of using Kesten's criteria, we use some of the results presented in \cite{DoSh24}, which allow to handle the general case without assuming symmetry for $T$ or $\psi$.

In this section, given a probability measure $\nu$ over $G$, $\nu^{*k}$ denotes the $k$-th convolution of the measure $\nu$ and $\lambda(G,\nu)$ the spectral radius of $\nu$, i.e., $\lambda(G,\nu)=\lim_{k\to\infty}(\nu^{*k}(e))^{1/k}$.
For a Gibbs-Markov map $(X,T,\mu,\alpha)$, with a potential $\varphi:X\to \mathbb{R}$, recall that the \emph{Gurevi\v{c} pressure} is given by 
\begin{align*}
    P_\varphi(T)=\limsup_{n\to\infty}\frac{1}{n}\log Z_a^n, \mbox{ where } Z_a^n:=\sum_{\substack{T^n(x)=x\\x\in a}}e^{\varphi_n(x)},
\end{align*}
for some $a\in \alpha$. 
If $(X,T,\mu,\alpha)$ is transitive and $\varphi$ is a locally H\"older continuous potential, then $P_\varphi(T)$ does not depend on $a\in\alpha$, and the $\limsup$ can be replaced by $\lim$ (see, for instance, \cite{SaLN}).

Next, we present some facts related to the abelianization of a finitely generated group $G$ presented in the works \cite{DoSh21} and \cite{DoSh24}. 
Let $G$ be a finitely generated group.
The group $G^{\rm{ab}}=G/[G,G]$ is an abelian finitely generated group, where $[G,G]$ denotes the commutator subgroup of $G$, that is, $[G,G]$ is the subgroup of $G$ generated by the elements of the form $[a,b]=aba^{-1}b^{-1}$, for $a,b\in G$.
Therefore, the \emph{free torsion abelianization of $G$} is given by $\overline{G}:=G^{\rm{ab}}/G^{\rm{ab}}_T\cong \mathbb{Z}^k$, for some $k\geq 0$, where $G^{\rm{ab}}_T$ denotes the torsion part of  $G^{\rm{ab}}$.
Denote by  $\pi:G\to\overline{G}$ the canonical homomorphism between $G$ and $\overline{G}$.
We refer to ${T}_\psi^{\rm{ab}}:X\times \overline{G}\to X\times \overline{G}$ as the extension of $X$ by the group $\overline{G}$ with cocycle $\overline{\psi}=\pi\circ\psi$, i.e., ${T}_\psi^{\rm{ab}}(x,m)=(T,m+\overline{\psi}(x)), x\in X, m\in\overline{G}.$
 The Gurevi\v{c} pressures of $T_\psi$ and $T_\psi^{\rm ab}$ are defined in a similar way as before,
 \begin{align*}
      P_\varphi(T_\psi)=\limsup_{n\to\infty}\frac{1}{n}\log Z_{a,g}^n,\mbox{ where }Z^n_{a,g}:=\sum_{\substack{T^n(x)=x,x\in a\\
     \psi_n(x)=g}} e^{\varphi_n(x)}, \mbox{ for }a\in\alpha, g\in G
 \end{align*}
and
\begin{align*}
    P_\varphi(T_\psi^{\rm ab})=\limsup_{n\to\infty}\frac{1}{n}\log Z_{a,m}^n,\mbox{ where }Z^n_{a,m}:=\sum_{\substack{T^n(x)=x,x\in a\\
     \psi_n(x)=m}}e^{\varphi_n(x)}, \mbox{ for }a\in\alpha, m\in \overline{G}.
\end{align*}
It is clear that $Z^n_{a,g}\leq Z^n_{a,\pi(g)} $, which implies that $P_\varphi(T_\psi)\leq P_\varphi(T_{\psi}^{\rm ab})$.

The main result of this section reads as follows:

\begin{proposition}\label{Prop:T-and-ext}
    Assume that $T_\psi$ is a topologically transitive extension of $(X,T,\alpha)$.
    If $G$ is an amenable finitely generated group, then 
    \begin{align}\label{eq: equal-abelian}
        P_\varphi(T_\psi)=        P_\varphi(T_\psi^{\rm ab}).
\end{align}
Moreover, there exists a subsequence $(m_{n_i})\subseteq (m_n)$ such that
$$P_\varphi(T_\psi)=\lim_{i\to\infty}\frac{1}{n_i}\log\lambda(G,m_{n_i})+P_\varphi(T).$$
\end{proposition}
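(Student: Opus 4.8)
The plan is to prove the two assertions of Proposition~\ref{Prop:T-and-ext} by combining three ingredients: (1) the measures $m_n$ on $G$ that encode the weighted periodic-orbit data of $T_\psi$, built in the spirit of~\cite{Ma13}; (2) the relation between the spectral radius of a random walk on an amenable finitely generated group and the spectral radius of the induced walk on its (free torsion) abelianization, furnished by the non-symmetric Kesten criterion of~\cite{DoSh24}; and (3) the inequality $P_\varphi(T_\psi)\le P_\varphi(T_\psi^{\rm ab})$ already recorded in the text, so that for~\eqref{eq: equal-abelian} only the reverse inequality needs work.

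**First I would** make the construction of $m_n$ explicit. For each $n$, set $Z^n_{a,g}=\sum_{T^nx=x,\,x\in a,\,\psi_n(x)=g}e^{\varphi_n(x)}$ and define $m_n(g)=Z^n_{a,g}/\sum_{h\in G}Z^n_{a,h}$, so that $m_n$ is a probability measure on $G$ and $(G,m_n)$ is the step distribution of an i.i.d.\ random walk. The normalizing constant $\sum_h Z^n_{a,h}=Z^n_a$ is the partition sum whose exponential growth rate is $P_\varphi(T)$. The key structural feature, inherited from the Markov property of $T$ and the cocycle identity $\psi_{n+m}(x)=\psi_m(T^nx)\psi_n(x)$, is that the convolution $m_n^{*k}$ records weighted orbits of period $kn$: concatenating $k$ periodic excursions each of length $n$ gives, up to the bounded distortion constant from~\eqref{eq:phi-holder}/\eqref{Eq: Gibbs prop}, a comparison $m_n^{*k}(g)\sim Z^{kn}_{a,g}/(Z^n_a)^k$. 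Taking $k$-th roots and letting $k\to\infty$ then identifies the spectral radius $\lambda(G,m_n)=\lim_{k\to\infty}(m_n^{*k}(e))^{1/k}$ with $\exp\bigl(n(P_\varphi(T_\psi)-P_\varphi(T))\bigr)$ up to a subexponential-in-$n$ error. Extracting a subsequence $(m_{n_i})$ along which this comparison is asymptotically sharp yields exactly
\[
P_\varphi(T_\psi)=\lim_{i\to\infty}\frac{1}{n_i}\log\lambda(G,m_{n_i})+P_\varphi(T),
\]
which is the ``Moreover'' statement.

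**Next** I would prove~\eqref{eq: equal-abelian}. Write $\overline{m}_n=\pi_*m_n$ for the pushforward to $\overline{G}\cong\Z^k$ under the canonical map $\pi:G\to\overline{G}$; this is the step distribution of the walk governing $T_\psi^{\rm ab}$, so that the same orbit-counting argument gives $\lambda(\overline{G},\overline{m}_{n})=\exp\bigl(n(P_\varphi(T_\psi^{\rm ab})-P_\varphi(T))\bigr)$ along the corresponding subsequence. Because $G$ is amenable and finitely generated, the non-symmetric Kesten criterion~\cite[Theorem~1.1]{DoSh24} applies to each fixed $m_{n}$ (whose support generates $G$ by topological transitivity of $T_\psi$) and yields $\lambda(G,m_{n})=\lambda(\overline{G},\overline{m}_{n})$: the spectral radius of the walk equals that of the abelianized walk, with no symmetry hypothesis. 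Combining this equality with the two spectral-radius formulas from the previous step gives $P_\varphi(T_\psi)-P_\varphi(T)=P_\varphi(T_\psi^{\rm ab})-P_\varphi(T)$ along the subsequence, hence~\eqref{eq: equal-abelian}, the inequality $P_\varphi(T_\psi)\le P_\varphi(T_\psi^{\rm ab})$ showing the limits agree and not merely a subsequential value.

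**The hard part will be** the orbit-concatenation comparison $m_n^{*k}(g)\sim Z^{kn}_{a,g}/(Z^n_a)^k$ and, relatedly, controlling the $n$-dependence of the error so that the subsequential limit survives: one must verify that gluing $k$ length-$n$ periodic orbits through the fixed cylinder $a$ reproduces every length-$kn$ periodic orbit in $a$ exactly once and that the multiplicative distortion incurred at each of the $k$ junctions stays uniformly bounded, so that $(m_n^{*k}(e))^{1/k}$ and $(Z^{kn}_{a,e}/(Z^n_a)^k)^{1/k}$ have the same $k\to\infty$ limit. This is where the Gibbs property~\eqref{Eq: Gibbs prop}, the big-image property, and topological transitivity of $T_\psi$ (ensuring the relevant periodic orbits with prescribed group value exist and that $\mathrm{supp}\,m_n$ generates $G$) all enter, and it is the step most sensitive to the absence of the symmetry assumption that~\cite{Ma13} relied on; the remaining passage through~\cite{DoSh24} is then a black-box application.
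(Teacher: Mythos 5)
There is a genuine gap, and it sits exactly where you locate the ``hard part''. Your whole argument rests on the two-sided comparison $m_n^{*k}(g)\sim Z^{kn}_{a,g}/(Z^n_a)^k$, from which you extract $\lambda(G,m_n)=\exp\bigl(n(P_\varphi(T_\psi)-P_\varphi(T))\bigr)$ up to subexponential error. Only one direction of that comparison is true. Concatenating $k$ period-$n$ excursions through the fixed cylinder $a$ produces period-$kn$ orbits that return to $a$ at \emph{every} multiple of $n$; these form a strict subfamily of the orbits counted by $Z^{kn}_{a,g}$, since a generic period-$kn$ point in $a$ has no reason to visit $a$ at the intermediate times $n,2n,\dots,(k-1)n$. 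So the gluing argument gives $(Z^n_a)^k\,m_n^{*k}(e)\lesssim^k Z^{kn}_{a,e}$ and hence only the inequality $P_\varphi(T)+\lim\frac1n\log\lambda(G,m_n)\le P_\varphi(T_\psi)$ (this is the paper's \eqref{eq:1-radius}); the reverse bound $Z^{kn}_{a,g}\lesssim^k (Z^n_a)^k m_n^{*k}(g)$, which you would need to get equality of spectral radius and pressure difference, does not follow and I see no way to repair it by a renewal decomposition, because inducing on $a$ produces excursions of varying lengths governed by different measures $m_j$. The same defect infects your abelianized identity $\lambda(\overline G,\overline m_n)=\exp\bigl(n(P_\varphi(T_\psi^{\rm ab})-P_\varphi(T))\bigr)$, so the Kesten-type equality $\lambda(G,m_n)=\lambda(\overline G,\overline m_n)$ alone cannot close the argument.

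The paper circumvents this by using the \emph{full} statement of \cite[Theorem 1.1]{DoSh24}, namely the explicit formula $\lambda(G,m_{n_i})=\lambda(\overline G,\overline m_{n_i})=\inf_{x\in\R^k}\phi_{\overline m_{n_i}}(x)$ (see \eqref{eq:spectrumDS}). Dropping all but the $m=0$ term in $\phi_{\overline m_{n_i}}(x_i)$ gives the one-step lower bound $\lambda(G,m_{n_i})\ge \overline m_{n_i}(0)\gtrsim Z^{n_i}_{a,e^{\rm ab}}/P_{n_i}(1)$, with no convolution powers and hence no gluing in the problematic direction; this yields $\lim_i\frac1{n_i}\log\lambda(G,m_{n_i})+P_\varphi(T)\ge P_\varphi(T_\psi^{\rm ab})$, and the sandwich closes against \eqref{eq:1-radius} using the trivial inequality $P_\varphi(T_\psi)\le P_\varphi(T_\psi^{\rm ab})$. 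Two further points you gloss over but which require proof: the support of $m_n$ generates $G$ only along an arithmetic subsequence $n_i=is$ (the paper's Lemma~\ref{Lemma:subseq}, needed before \cite{DoSh24} can be applied at all), and the existence of the limit $\lim_i\frac1{n_i}\log\lambda(G,m_{n_i})$ is not automatic --- the paper derives it from an almost-superadditivity argument for $\log\tilde\phi_i(x_i)$ (Corollary~\ref{cor:69}) rather than by ``extracting a subsequence along which the comparison is sharp''.
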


\begin{remark}
{\rm 
Theorem 5.1 in \cite{DoSh21} is phrased as: 
 if $G$ is an amenable group, then $P_\varphi(T_\psi^{\rm{ab}})=P_\varphi(T_\psi)$, where $(X,T,\alpha)$ is a subshift of finite type contained in a full-shift over a finite alphabet.
 Proposition \ref{Prop:T-and-ext} is a generalization  for Gibbs Markov maps of \cite[Theorem 4.1]{DoSh21} when the group $G$ is assumed to have the additional condition of being finitely generated.} 
 \end{remark}
The proof of Proposition \ref{Prop:T-and-ext} is presented at the end of this section. 

\subsection{Almost superadditive sequences}
The following lemma is folklore. 
However, we add a proof for completeness of this document.
\begin{lemma}\label{lemma:Fekete}
    Let $(a_n)\subseteq \mathbb{R}$ and $C > 0$ be so that 
    \begin{align}\label{eq:Feketeeq}
        a_{n+m}\geq a_n+a_m+\log C, \mbox{ for all }n,m\in\mathbb{N}.
    \end{align}
    Then, the limit 
        $\lim_{n\to\infty}\frac{a_n}{n}$ exists (although it may be infinite).
\end{lemma}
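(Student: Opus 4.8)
The plan is to reduce the statement to the classical superadditive form of Fekete's lemma by absorbing the constant into the sequence. Specifically, I would set $b_n := a_n + \log C$ and check that $(b_n)$ is genuinely superadditive: from the hypothesis~\eqref{eq:Feketeeq} we get $b_{n+m} = a_{n+m} + \log C \ge (a_n + a_m + \log C) + \log C = (a_n + \log C) + (a_m + \log C) = b_n + b_m$. Thus the correction term disappears after this shift, and it suffices to show that $\lim_{n\to\infty} b_n/n$ exists (possibly $+\infty$), since $a_n/n = b_n/n - (\log C)/n$ and the last term vanishes as $n \to \infty$.

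For the superadditive sequence $(b_n)$ I would run the standard Fekete argument. Put $L := \sup_{m \ge 1} b_m/m \in (-\infty, +\infty]$. Fix $m$ and, for $n > m$, write $n = qm + r$ with $q \ge 1$ and $1 \le r \le m$; iterating superadditivity gives $b_n \ge q\,b_m + b_r$. Dividing by $n$ and letting $n \to \infty$ with $m$ held fixed, one has $qm/n \to 1$ while $b_r/n \to 0$ because the finitely many values $b_1,\dots,b_m$ are bounded, so $\liminf_{n\to\infty} b_n/n \ge b_m/m$. Taking the supremum over $m$ yields $\liminf_{n\to\infty} b_n/n \ge L$. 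On the other hand $b_n/n \le L$ for every $n$ by definition of $L$, so $\limsup_{n\to\infty} b_n/n \le L$. Combining the two bounds gives $\lim_{n\to\infty} b_n/n = L$, and transferring back through the shift gives $\lim_{n\to\infty} a_n/n = L$.

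The only point requiring mild care — and the source of the clause ``although it may be infinite'' — is that for a superadditive sequence the limit equals the \emph{supremum} rather than the infimum, so nothing forces $L$ to be finite. I would therefore phrase the $\liminf \ge L$ step so that it reads correctly verbatim when $L = +\infty$, in which case it already shows the limit is $+\infty$. No step here is genuinely difficult: the reduction via $b_n = a_n + \log C$ is routine, and the main (mild) obstacle is just handling the Euclidean-division remainder uniformly in $m$ while allowing the value $L = +\infty$.
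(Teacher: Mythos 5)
Your proof is correct. The route differs from the paper's in one small but genuine way: you first absorb the constant by the shift $b_n := a_n + \log C$, which turns~\eqref{eq:Feketeeq} into honest superadditivity $b_{n+m}\ge b_n+b_m$, and then invoke the classical Fekete argument with the limit identified as $L=\sup_m b_m/m$; since $(\log C)/n\to 0$, the conclusion transfers back to $(a_n)$. The paper instead reruns Fekete's proof directly on $(a_n)$, carrying the $\log C$ error term through the Euclidean-division estimate (choosing $p$ so that $\frac{\log C}{p}>-\eps$) and splitting into the two cases $\limsup_r a_r/r<\infty$ and $=\infty$. Your version buys a cleaner statement: the substitution removes the bookkeeping of the constant, and phrasing the limit as the supremum handles the infinite case uniformly rather than by a separate case analysis. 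The paper's version avoids introducing an auxiliary sequence but pays for it with the $\eps$-juggling and the case split. Both arguments are complete; the one point you rightly flag --- that for superadditive sequences the limit is the supremum and hence may be $+\infty$ --- is exactly the clause in the statement.
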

\begin{proof}
We proceed via (the proof of) Fekete's lemma, but we need to adjust this slightly due to presence of $C > 0$. 
 The rest of the proof is standard, repeating the steps in Fekete's lemma.
 Write $n=mp+q$, with $q<p$, and note that
 \begin{align*}
  \frac{a_n}{n}\ge \frac{a_{mp+q}}{mp+q}\ge \frac{m a_p +a_q-(m+1)\log C^*}{mp+q}
  \ge \frac{m}{m+1}\frac{a_p}{p} +\frac{a_q}{(m+1)p}+\frac{\log C}{p}.
 \end{align*}
 Let $\eps>0$ be arbitrary. 
If $\limsup_r \frac{a_r}{r} < \infty$, then
choose $p$ so that
$\frac{a_p}{p}\ge \lim\sup_r\frac{a_r}{r}-\eps$ and $\frac{\log C}{p} > -\eps$. 
Thus,
\[
 \frac{a_n}{n}\ge \frac{m}{m+1}\lim\sup_r\frac{a_r}{r}+\frac{a_q}{(m+1)p}-2\eps.
\]
Let $m\to\infty$. 
Then $\liminf_{n\to\infty}\frac{a_n}{n}\geq  \lim\sup_{n\to\infty}\frac{a_n}{n}$, and the conclusion follows.

Now, if  $\limsup_r \frac{a_r}{r} = \infty$, then choose $p$ such that
$\frac{a_p}{p}\ge  1/\eps$ and $\frac{\log C}{p} > -\eps$.
Therefore,
\[
 \frac{a_n}{n}\ge \frac{m}{m+1} \frac{1}{\eps} + \frac{a_q}{(m+1)p}-\eps.
\]
Let $m\to\infty$. Then  $\liminf_{n\to\infty}\frac{a_n}{n}\geq \frac{1}{\eps} - \eps$, and since $\eps > 0$ was arbitrary,
$\liminf_{n\to\infty}\frac{a_n}{n} = \infty$.
\end{proof}
We prove that the sequence $(\mu^n(e))$ satisfies \eqref{eq:Feketeeq} for some $C>0$.
\begin{lemma}\label{lemma:subadditive mu}
    There exists $D>0$ such that for each $n,m\in\mathbb{N}$,
    \begin{align*}
        \mu^{n+m}(e)\geq D\mu^n(e)\mu^m(e).
    \end{align*}
\end{lemma}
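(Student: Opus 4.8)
The plan is to build $(n+m)$-cylinders on which $\psi_{n+m} = e$ by concatenating an $n$-cylinder carrying $\psi_n = e$ with an $m$-cylinder carrying $\psi_m = e$, exploiting the full branch structure together with the Gibbs property \eqref{Eq: Gibbs prop}. First I would record the cocycle identity $\psi_{n+m}(x) = \psi_m(T^n x)\,\psi_n(x)$, which follows from \eqref{eq:defpsin} by splitting the product after the $n$-th factor, together with its additive analogue $\varphi_{n+m}(x) = \varphi_n(x) + \varphi_m(T^n x)$. Since $\psi$ is constant on partition elements in the discrete case, $\psi_n$ is constant on each $n$-cylinder and $\psi_m$ on each $m$-cylinder.

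Next, fix an $n$-cylinder $a$ with $\psi_n(a) = e$ and an $m$-cylinder $b$ with $\psi_m(b) = e$. Because $T$ has full branches, $T^n|_a : a \to X$ is a bijection $\bmod\,\mu$, so $c_{a,b} := a \cap T^{-n}(b)$ is a well-defined $(n+m)$-cylinder with $T^n(c_{a,b}) = b$. For $x \in c_{a,b}$ one has $\psi_n(x) = e$ and $T^n x \in b$, hence $\psi_m(T^n x) = e$; by the cocycle identity $\psi_{n+m}(c_{a,b}) = e$. I would then note that the cylinders $c_{a,b}$ are pairwise distinct as $(a,b)$ ranges over such pairs: distinct $a$ lie in disjoint $n$-cylinders, while for fixed $a$ the relation $T^n(c_{a,b}) = b$ separates distinct $b$.

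The heart of the estimate is a lower bound on $\mu(c_{a,b})$. Applying \eqref{Eq: Gibbs prop} to $a$, to $b$, and to $c_{a,b}$, and using $\varphi_{n+m} = \varphi_n + \varphi_m\circ T^n$, for $x \in c_{a,b}$ I obtain $e^{\varphi_{n+m}(x)} = e^{\varphi_n(x)}\, e^{\varphi_m(T^n x)} \ge C^{-2}\mu(a)\mu(b)$, while $e^{\varphi_{n+m}(x)} \le C\,\mu(c_{a,b})$, whence $\mu(c_{a,b}) \ge C^{-3}\mu(a)\mu(b)$. Summing over all admissible pairs and invoking the distinctness of the $c_{a,b}$ factorizes the double sum exactly, giving $\mu^{n+m}(e) \ge \sum_{a,b} \mu(c_{a,b}) \ge C^{-3}\mu^n(e)\mu^m(e)$, so that $D = C^{-3}$ works.

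The argument is essentially mechanical once the concatenation is set up; the only point requiring genuine care is that full branches are needed to ensure that every pair $(a,b)$ produces a legitimate $(n+m)$-cylinder and that these are distinct, so that the factorization of the double sum holds as an equality of sums rather than degenerating into an inequality in the wrong direction.
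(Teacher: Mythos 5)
Your proof is correct and follows essentially the same route as the paper: decompose $(n+m)$-cylinders as concatenations $a\cap T^{-n}(b)$, use the Gibbs property \eqref{Eq: Gibbs prop} together with $\varphi_{n+m}=\varphi_n+\varphi_m\circ T^n$ to get $\mu(a\cap T^{-n}(b))\gtrsim\mu(a)\mu(b)$, and restrict the sum to pairs with $\psi_n(a)=\psi_m(b)=e$ via the cocycle identity. The only cosmetic difference is that the paper records the two-sided comparison $\mu(c_n\cap T^{-n}a_m)\sim_{D'}\mu(c_n)\mu(a_m)$ while you keep only the one-sided bound actually needed.
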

\begin{proof}
     Let $c_n,a_m$ be an $n$-cylinder and an $m$-cylinder, respectively. 
    Note that
    \begin{align*}
        \mu(c_n\cap T^{-n}a_m)\sim_{D'}\mu(c_n)\mu(a_m),
    \end{align*}
    for some $D'>0$.
    Indeed, the Gibbs property \eqref{Eq: Gibbs prop} guarantees  the existence of $C>0$, such that $\mu(c_n)\sim_{C}e^{\varphi_n(x)}$, $x\in c_n$, and $\mu(a_m)\sim_C e^{\varphi_m(y)}$, $y\in a_m$. 
    Given the $n+m$-cylinder $b_{m+n}=c_n\cap T^{-n}(a_m)$, we obtain that
    \begin{align*}
         \mu(c_n\cap T^{-n}a_m)\sim_C e^{\varphi_{n+m}(x)}=e^{\varphi_n(x)}e^{\varphi_m(T^n(x))}\sim_{C^2}\mu(c_n)\mu(a_m).
    \end{align*}
    From this, we deduce that
    \begin{align}\label{eq:mu^n}
        \mu^{n+m}(e)\sim_{D'}\sum_{\substack{b_{n+m}=c_n\cap T^{-n}a_m\in \alpha_{n+m}\\\psi_{n+m}(b_{n+m})=e}}\mu(c_n)\mu(a_m).
    \end{align}
    Now, using that $\psi$ is constant on cylinders, we conclude that
    \begin{align*}
        \mu^{n+m}(e)=\mu(\psi_{n+m}^{inv}(e))=\sum_{\substack{b_{n+m}\in \alpha_{n+m}\\\psi_{n+m}(b_{n+m})=e}}\mu(b_{n+m}).
    \end{align*}
    If $\psi_n(x)=e$ and $\psi_m(T^nx)=e$, then $\psi_{n+m}(x)=e$. 
    Thus,
    \begin{align*}
        \sum_{\substack{b_{n+m}=c_n\cap T^{-n}a_m\in \alpha_{n+m}\\\psi_{n+m}(b_{n+m})=e}}\mu(c_n)\mu(a_m)&\geq \sum_{\substack{c_{n}\in \alpha_{n}\\\psi_{n}(c_{n})=e}}\sum_{\substack{a_{m}\in \alpha_{m}\\\psi_{m}(a_{m})=e}}\mu(c_n)\mu(a_m)\\
        &=\sum_{\substack{c_{n}\in \alpha_{n}\\\psi_{n}(c_{n})=e}}\mu(c_n)\sum_{\substack{a_{m}\in \alpha_{m}\\\psi_{m}(a_{m})=e}}\mu(a_m)\\
        &=\mu^n(e)\mu^m(e).
    \end{align*}
    This, together with \eqref{eq:mu^n}, gives the conclusion.
\end{proof}
Note that under the assumption that $\psi$ is constant on partition elements and $T$ is topologically mixing, the first part of the short argument provided in~\cite[Proof of Theorem 5.5]{Ma13} remains the same, ensuring that
\[
 P_\varphi(T_\psi)=\limsup_{n\to\infty}\log\left((\mu^n(e))^{1/n}\right).
\]
This part of the argument does not require any symmetry condition of $\mu$, equivalently of $\varphi$.
This implies that
\begin{align*}
\rho:=\limsup_{n\to\infty}(\mu^n(e))^{1/n}=e^{P_\varphi(T_\psi)}.
\end{align*} 
From now on, we assume, without loss of generality, that $P_\varphi(T)=0$, that is, $e^{P_\varphi(T)}=1$. With this assumption, as shown in~\cite[Proposition 5.2]{Ma13} we have that $\rho\le 1$,
with equality only if $G$ is  amenable and $T$, $\psi$, and $\varphi$ are symmetric in the sense of~\cite[Theorem 4.1]{Ma13}.

The following corollary gives the existence and finiteness of the limit we are studying.
\begin{corollary}\label{corollary:existence limit}
    If $e\in \psi(X)$, then the limit
    \begin{align*}
    \lim_{n\to\infty}\frac{1}{n}\log(\mu^n(e))
    \end{align*}
    exists and is finite.
\end{corollary}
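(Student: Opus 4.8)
The plan is to recognize this corollary as essentially the combination of the two preceding lemmas, and to reduce it to the almost-superadditive Fekete lemma (Lemma~\ref{lemma:Fekete}). I would set $a_n := \log \mu^n(e)$. The first thing to check is that each $a_n$ is a well-defined real number. Since $e \in \psi(X)$, Lemma~\ref{lemma: positivemun} guarantees $\mu^n(e) > 0$ for every $n \in \N$; combined with the trivial bound $\mu^n(e) \le 1$ (as $\mu$ is a probability measure and $\{x : \psi_n(x) = e\} \subseteq X$), this places $a_n \in (-\infty, 0]$.

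Next I would feed $(a_n)$ into Lemma~\ref{lemma:Fekete}. By Lemma~\ref{lemma:subadditive mu} there is $D > 0$ with $\mu^{n+m}(e) \ge D\, \mu^n(e)\mu^m(e)$ for all $n, m$; taking logarithms yields
\[
a_{n+m} \ge a_n + a_m + \log D,
\]
which is precisely hypothesis~\eqref{eq:Feketeeq} with $C = D$. Hence Lemma~\ref{lemma:Fekete} applies and $\lim_{n\to\infty} \tfrac{1}{n}\log\mu^n(e) = \lim_{n\to\infty} a_n/n$ exists in $[-\infty, +\infty]$.

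It then remains to rule out an infinite value. The upper bound is immediate: $a_n \le 0$ forces $a_n/n \le 0$, so the limit is at most $0$ and in particular not $+\infty$. For the lower bound I would iterate the superadditivity from Lemma~\ref{lemma:subadditive mu}: writing $c := \mu^1(e) > 0$ (again positive by Lemma~\ref{lemma: positivemun} since $e\in\psi(X)$), one obtains $\mu^n(e) \ge D^{n-1} c^n$, whence $(\mu^n(e))^{1/n} \ge D^{(n-1)/n} c \to Dc > 0$. Thus the limit is bounded below by $\log(Dc) > -\infty$, so it is finite; since it exists, it coincides with $\limsup_{n\to\infty} \tfrac1n \log \mu^n(e) = \log \rho = P_\varphi(T_\psi)$.

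I do not anticipate a genuine obstacle, as all the substantive content is already carried by Lemmas~\ref{lemma:subadditive mu} and~\ref{lemma:Fekete}. The only point requiring a moment of care is finiteness, namely excluding the value $-\infty$; this is exactly where the iterated lower bound $\mu^n(e) \ge D^{n-1} c^n$, and hence the positivity $\mu^1(e) > 0$ supplied by the standing hypothesis $e \in \psi(X)$, enters the argument.
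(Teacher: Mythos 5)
Your proof matches the paper's own: both set $a_n=\log\mu^n(e)$, use Lemma~\ref{lemma: positivemun} for positivity, Lemma~\ref{lemma:subadditive mu} for the almost-superadditivity $a_{n+m}\ge a_n+a_m+\log D$, and then apply Lemma~\ref{lemma:Fekete}. The only divergence is the finiteness step: the paper invokes $\rho\le 1$, which rules out $+\infty$ but not $-\infty$, whereas your iterated bound $\mu^n(e)\ge D^{n-1}c^n$ with $c=\mu^1(e)>0$ also excludes $-\infty$, so your treatment of that point is in fact slightly more complete than the paper's.
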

\begin{proof}
    If $e\in \psi(X)$, then $\mu^n(e)>0$ by Lemma \ref{lemma: positivemun}. 
The corollary follows directly as an application of Lemma \ref{lemma:subadditive mu} and Lemma \ref{lemma:Fekete} by taking $a_n=\log(\mu^n(e))$, for $n\in\mathbb{N}$.
This finiteness of the limit is a direct consequence of $\rho\leq 1$.
\end{proof}

Corollary \ref{corollary:existence limit} implies the following proposition.
\begin{proposition}\label{prop:limnonab} Suppose that $T$ is topologically mixing and that $e\in\psi(X)$.
Then
\begin{align*}
\rho=    \lim_{n\to\infty}(\mu^n(e))^{1/n}=e^{P_G(T_\psi)}.
\end{align*} 
\end{proposition}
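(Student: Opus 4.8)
The goal is to upgrade the $\limsup$ in the definition $\rho = \limsup_{n\to\infty}(\mu^n(e))^{1/n}$ to a genuine limit, and to identify this limit with $e^{P_G(T_\psi)}$. My plan is to invoke the machinery already assembled in the preceding lemmas, so that the proof reduces to a short bookkeeping argument.

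First I would observe that the hypotheses $e \in \psi(X)$ together with $T$ topologically mixing are precisely what is needed to apply Corollary~\ref{corollary:existence limit}: since $e \in \psi(X)$, Lemma~\ref{lemma: positivemun} gives $\mu^n(e) > 0$ for all $n$, so that $a_n := \log(\mu^n(e))$ is a well-defined real sequence. Lemma~\ref{lemma:subadditive mu} supplies a constant $D > 0$ with $\mu^{n+m}(e) \geq D\,\mu^n(e)\mu^m(e)$, which translates exactly into the almost-superadditivity hypothesis $a_{n+m} \geq a_n + a_m + \log D$ of Lemma~\ref{lemma:Fekete}. Hence the limit
\[
 \lim_{n\to\infty}\frac{1}{n}\log(\mu^n(e)) = \lim_{n\to\infty}\log\left((\mu^n(e))^{1/n}\right)
\]
exists (and by Corollary~\ref{corollary:existence limit} it is finite, being bounded above using $\rho \leq 1$). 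In particular, for a sequence whose $\frac{1}{n}\log(\cdot)$ converges, the $\limsup$ and the limit agree, so the $\limsup$ defining $\rho$ is actually a limit: $\rho = \lim_{n\to\infty}(\mu^n(e))^{1/n}$.

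It remains only to match this limit to the pressure. This is already recorded in the discussion immediately preceding the statement: under the assumption that $\psi$ is constant on partition elements and $T$ is topologically mixing, the first part of the argument in~\cite[Proof of Theorem 5.5]{Ma13} yields $P_\varphi(T_\psi) = \limsup_{n\to\infty}\log\left((\mu^n(e))^{1/n}\right)$, and this part requires no symmetry hypothesis on $\mu$ or $\varphi$. Combining this identity with the fact just established that the $\limsup$ is a limit, I obtain
\[
 \rho = \lim_{n\to\infty}(\mu^n(e))^{1/n} = e^{P_\varphi(T_\psi)},
\]
which is the asserted equality (with $P_G(T_\psi)$ denoting the Gurevi\v{c} pressure $P_\varphi(T_\psi)$).

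I do not expect any genuine obstacle here, since every ingredient has been isolated beforehand; the proposition is essentially a corollary assembling Corollary~\ref{corollary:existence limit} with the pressure identity of~\cite{Ma13}. The only point requiring mild care is ensuring that the cited portion of~\cite[Proof of Theorem 5.5]{Ma13} truly avoids the symmetry assumption — which the surrounding text explicitly confirms — and that the existence of the limit (rather than merely the $\limsup$) is what promotes the pressure formula from a $\limsup$ to a clean limit. Thus the main step, if any, is purely the verification that the almost-superadditive Fekete argument applies, which has already been carried out in Lemma~\ref{lemma:subadditive mu} and Lemma~\ref{lemma:Fekete}.
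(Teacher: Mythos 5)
Your proposal is correct and follows exactly the paper's route: the paper's entire proof is the remark that Corollary~\ref{corollary:existence limit} (itself obtained from Lemma~\ref{lemma: positivemun}, Lemma~\ref{lemma:subadditive mu} and the almost-superadditive Fekete Lemma~\ref{lemma:Fekete}) upgrades the $\limsup$ defining $\rho$ to a limit, which is then identified with $e^{P_\varphi(T_\psi)}$ via the symmetry-free part of Stadlbauer's argument recorded just before the statement. Nothing is missing.
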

\subsection{Relation between $P_\varphi(T_\psi)$ and $P_\varphi({T})$ }
From now on, assume that $T$ is topologically mixing and that $e\in\psi(X)$.

 Let $a\in \alpha_m$, for some $m\in\mathbb{N}$ and  $\xi\in a$ a fixed element.
For each $n\geq m$, denote 
$$\alpha_n(a)=\{v\in\alpha_n: v\cap a=v \bmod{\mu},\mu(v\cap T^{-n}(a))>0\}.$$
For every $n\in\mathbb{N}$, define the operator $P_n: \ell^2(G)\to\ell^2(G)$ by
\begin{align*}
    P_n(f)(h):=\sum_{\substack{v\in \alpha_n(a)}}e^{\varphi_n(T_v^{-1} \xi)}f(h\psi(v)^{-1}), \; f\in \ell^2(G) \mbox{ and }h\in G. 
\end{align*}
Here, $T_v^{-1}: T^n(v)\to v$ is the inverse map of $T^n:v\to T^n(v)$, $v\in\alpha_n$.
Note that $P_n(1)\equiv L^n(1_{a})(\xi)$, which is finite if $P_\varphi(T)<\infty$.
Moreover, \cite[Proposition 3.2]{SaLN} guarantees that $P_\varphi(T)=\lim_{n\to\infty}\frac{1}{n}\log P_n(1)$.

For every $n\in\mathbb{N}$, define the measure $m_n$ associated to $P_n$ as follows:
\begin{align*}
    m_n(g):=\dfrac{1}{P_n(1)}\sum_{\substack{v\in \alpha_n(a)\\\psi_n(v)=g}}e^{\varphi_n(T_v^{-1}(\xi))},\; g\in G.
\end{align*}

It is not necessarily true that every measure $m_n$ is non-degenerate.
However, we have the following.
\begin{lemma}\label{Lemma:subseq}
There exists $s\in\mathbb{N}$ such that $m_{is}$ is a non-degenerate measure over $G$, for each $i\in\mathbb{N}$.    
\end{lemma}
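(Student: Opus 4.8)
The plan is to show that for a suitable $s$ the support of $m_{is}$ generates $G$ for every $i$, which is precisely non-degeneracy in the sense needed for the Kesten-type results of~\cite{DoSh24}. Since $e^{\varphi_n}>0$ and $\psi$ is constant on partition elements, the support of $m_n$ is exactly $\Gamma_n:=\{\psi_n(v):v\in\alpha_n(a)\}$, the set of cocycle values carried by the length-$n$ return paths from $a$ to itself. Two structural facts will drive the argument. First, a \emph{concatenation} property: given $v\in\alpha_n(a)$ and $w\in\alpha_{n'}(a)$, full branches let me select the unique $(n+n')$-cylinder $u\subseteq v$ with $T^n(u)=w$; then $u\in\alpha_{n+n'}(a)$, and the cocycle identity $\psi_{n+n'}=\psi_{n'}(T^n\cdot)\,\psi_n$ gives $\psi_{n+n'}(u)=\psi_{n'}(w)\,\psi_n(v)$, so that $\Gamma_{n'}\Gamma_n\subseteq\Gamma_{n+n'}$. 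Second, topological transitivity of $T_\psi$ guarantees that every $g\in G$ is realised as $\psi_n(v)$ for some return path to $a$, i.e.\ $\bigcup_n\Gamma_n=G$; in particular a fixed finite generating set $\gamma_1,\dots,\gamma_r$ of $G$ satisfies $\gamma_j\in\Gamma_{n_j}$ for suitable $n_j$.

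The heart of the matter, and what I expect to be the main obstacle, is to control the return set $R:=\{n\in\N: e\in\Gamma_n\}$ of lengths admitting a return to $a$ with trivial cocycle. By concatenation $R$ is closed under addition, hence a sub-semigroup of $(\N,+)$, and the key claim is that $R$ is \emph{cofinite}. To prove this I exploit $e\in\psi(X)$: fix a partition element $a_0$ with $\psi(a_0)=e$, and write $c=\psi_m(a)$ for the (constant) cocycle of the base cylinder $a\in\alpha_m$. A cylinder $v\in\alpha_n$ with $v\subseteq a$ satisfies $\psi_n(v)=e$ exactly when its free symbols (positions $m,\dots,n-1$, which are unconstrained by full branches) carry cocycle $c^{-1}$. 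Since the symbol cocycles generate $G$ as a semigroup (a consequence of $\bigcup_n\Gamma_n=G$), I can write $c^{-1}$ as a product of $\ell$ symbol cocycles; inserting $k$ copies of $a_0$, each contributing $e$, realises $c^{-1}$ with $\ell+k$ free symbols for every $k\ge0$. This produces a return to $a$ of length $m+\ell+k$ with trivial cocycle for all $k\ge 0$, so $R\supseteq\{m+\ell,m+\ell+1,\dots\}$ is cofinite (equivalently, $R$ is a numerical semigroup of gcd $1$). The delicate point here is that $e\in\psi(X)$ only supplies a single $e$-valued symbol, and it is the interplay of this symbol with full branches (to pad lengths by one at a time) and transitivity (to realise $c^{-1}$) that forces $\gcd R=1$; without $e\in\psi(X)$ one could have $R$ confined to a proper arithmetic progression, which is exactly the obstruction the hypothesis rules out.

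With $R$ cofinite the conclusion is immediate. Let $N_0$ be such that $R\supseteq\{N_0,N_0+1,\dots\}$ and set $s:=N_0+\max_j n_j$. For every $i\ge1$ and every $j$ we have $is-n_j\ge s-n_j\ge N_0$, hence $is-n_j\in R$, i.e.\ $e\in\Gamma_{is-n_j}$; concatenation then yields $\gamma_j=\gamma_j\cdot e\in\Gamma_{n_j}\Gamma_{is-n_j}\subseteq\Gamma_{is}$. Thus $\Gamma_{is}$ contains the generating set $\{\gamma_1,\dots,\gamma_r\}$, and it also contains $e$ since $is\ge N_0$. Consequently the support of $m_{is}$ generates $G$ and contains the identity, so $m_{is}$ is non-degenerate; in particular $\alpha_{is}(a)\neq\emptyset$, so $m_{is}$ is a genuine probability measure. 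As $i\ge1$ was arbitrary, this proves the lemma for this choice of $s$.
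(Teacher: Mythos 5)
Your proposal is correct, and it takes a more careful route than the paper. The paper's own proof is very short: it picks semigroup generators $g_1,\dots,g_n$ of $G$, uses transitivity of $T_\psi$ to find return cylinders $v\in\alpha_{s_i}(a)$ with $\psi_{s_i}(v)=g_i$, sets $s=\operatorname{lcm}(s_1,\dots,s_n)$, and asserts that $m_s(g_i)>0$ for all $i$, whence non-degeneracy of each $m_{is}$. The step from $m_{s_i}(g_i)>0$ to $m_s(g_i)>0$ is left implicit there, and concatenation alone only yields $g_i^{s/s_i}\in\operatorname{supp}m_s$ rather than $g_i$ itself; what is actually needed is a way to pad a length-$s_i$ return carrying $g_i$ up to length $s$ without changing the cocycle. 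Your cofiniteness argument for $R=\{n: e\in\Gamma_n\}$ --- built from the $e$-valued symbol $a_0$, full branches, and the realization of $c^{-1}$ as a product of symbol cocycles --- is precisely that padding device, so your proof both substantiates the lemma and supplies the justification the paper's $\operatorname{lcm}$ shortcut glosses over. It also proves something slightly stronger: with your choice of $s$, the measure $m_n$ is non-degenerate for \emph{every} $n\geq s$, not only for multiples of $s$, which is more than the lemma requires but costs nothing. The only points where you operate at the same level of informality as the paper are the claim that transitivity of $T_\psi$ produces return words of length at least $m$ realizing each generator (one should note that the relevant return-time set is infinite, so lengths $\geq m$ can be arranged), and the identification of $\operatorname{supp}m_n$ with $\Gamma_n$, which indeed holds because $e^{\varphi_n}>0$ and $\psi$ is constant on partition elements; neither is a genuine gap.
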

\begin{proof}
    Since $G$ is finitely generated, there exist $g_1,\dots, g_n\in G$ that generate the group $G$ as a semigroup. 
    The transitivity of $T_\psi$ guarantees that for each $i\in\{1,\dots, n\}$, there exist $s_i\in\mathbb{N}$ and $v\in \alpha_{s_i}(a)$ such that $\psi_{s_i}(v)=g_i$.
    In other words, $m_{s_i}(g_i)> 0$.
    Taking $s$ as the least common multiple of $s_1,\dots, s_n$, we deduce that $m_s(g_i)>0$ for every $i\in\{1,\dots,n\}$.
    Consequently, the measures $m_{is}$, $i\in\mathbb{N}$, are non-degenerate.
\end{proof}

For every $i\in\mathbb{N}$, let $n_i=is$ with $s\in\mathbb{N}$ the number given by the previous lemma. 
Then, \cite[Theorem 1.1]{DoSh24} implies that
\begin{align}\label{eq:spectrumDS}
\lambda(G,m_{n_i})=\lambda(\overline{G},\overline{m_{n_i}})=\phi_{\overline{m_{n_i}}}(x_{n_i}),
\end{align} where $\overline{m_{n_i}}$ is the push-forward measure of $m_{n_i}$ under the map $\pi:G\to\overline{G}$, i.e., $\overline{m_{n_i}}=\pi_*(m_{n_i})$, the map $\phi_{\overline{m_{n_i}}}:\mathbb{R}^k\to \mathbb{R}$ is given by
\begin{align*}\phi_{\overline{m_{n_i}}}(x)=\sum_{m\in\mathbb{Z}^k}e^{\langle m,x\rangle }\overline{m_{n_i}}(m), x\in\mathbb{R}^k,
\end{align*}
and $x_{i}\in\mathbb{R}^k$ is the unique element such that $\phi_{\overline{m_{n_i}}}(x_i)=\inf_{x\in\mathbb{R}^k}\phi_{\overline{m_{n_i}}}(x)$.
To ease notation, we denote $\phi_i=\phi_{\overline{m_{n_i}}}$.

Next, we need to argue that $\lim_{i\to\infty}\frac{1}{n_i}\log \phi_i(x_i)$ 
exits, as in Corollary~\ref{cor:69} below.
To do so, we define a new map.
For every $i\in \mathbb{N}$, let $\tilde{\phi_i}(x):\mathbb{R}^k\to\mathbb{R}$ be a map defined by
   \begin{align}\label{eq: def phitilde}
       \tilde{\phi_i}(x):=\sum_{m\in\mathbb{Z}^k}e^{\langle m,x\rangle}\sum_{\substack{v\in\alpha_{n_i}(a)\\\psi_{n_i}(v)=g}}e^{\varphi_{n_i}(T^{-1}_v(\xi))}=P_{n_i}(1)\phi_i(x), x\in\mathbb{R}^k.
   \end{align}
   Note that the sequence $(\log\tilde{\phi}_i(x_{i}))\subseteq \mathbb{R}$ satisfies \eqref{eq:Feketeeq}, as expressed in the following lemma.
\begin{lemma}\label{lemma:subadditivephi}
  There exists $D>0$ such that
  \begin{align*}
      \tilde{\phi}_{i+j}(x_{i+j})\geq D\tilde{\phi_i}(x_{i})\tilde{\phi_j}(x_{j}),\;\mbox{ for all }i,j  \in\mathbb{N}.
  \end{align*}
\end{lemma}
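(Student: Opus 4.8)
The plan is to establish a pointwise super-multiplicativity estimate $\tilde{\phi}_{i+j}(x) \geq D\,\tilde{\phi}_i(x)\tilde{\phi}_j(x)$ valid for \emph{all} $x \in \mathbb{R}^k$, and only then to pass to the minimizers by a short nonnegativity argument. This reuses the concatenation strategy already carried out in Lemma~\ref{lemma:subadditive mu}. First I would rewrite $\tilde{\phi}_i$ in the transparent form
\[
\tilde{\phi}_i(x) = \sum_{v \in \alpha_{n_i}(a)} e^{\langle \overline{\psi}_{n_i}(v),\, x\rangle + \varphi_{n_i}(T_v^{-1}\xi)},
\]
collapsing the double sum over $m \in \mathbb{Z}^k$ and $v$ into a single sum over cylinders, using $\overline{\psi}=\pi\circ\psi$ and that $\psi$ is constant on partition elements (so $\overline{\psi}_{n_i}$ is constant on each $n_i$-cylinder). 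Since $n_{i+j} = (i+j)s = n_i + n_j$, I can concatenate: to each pair $(v,w) \in \alpha_{n_i}(a) \times \alpha_{n_j}(a)$ I associate the $(n_i+n_j)$-cylinder $z = v \cap T_v^{-n_i}(w)$. Because $T$ has full branches, $z$ is a nonempty cylinder contained in $a$, hence $z \in \alpha_{n_{i+j}}(a)$, and $(v,w)\mapsto z$ is injective since the $n_i$-cylinder containing $z$ recovers $v$ and $T^{n_i}(z)=w$ recovers $w$.

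Next I would verify the two ingredients on $z$. As $\overline{G}$ is abelian, the cocycle identity $\overline{\psi}_{n+m}(x)=\overline{\psi}_n(x)+\overline{\psi}_m(T^n x)$ gives $\overline{\psi}_{n_{i+j}}(z)=\overline{\psi}_{n_i}(v)+\overline{\psi}_{n_j}(w)$, so the linear exponent splits exactly. For the potential, writing $y=T_z^{-1}\xi$ one checks $T^{n_i}y = T_w^{-1}\xi$ exactly, whence $\varphi_{n_{i+j}}(y)=\varphi_{n_i}(y)+\varphi_{n_j}(T_w^{-1}\xi)$; and since $y$ and $T_v^{-1}\xi$ both lie in the $n_i$-cylinder $v$, bounded distortion (equivalently the Gibbs property, as used in Lemma~\ref{lemma:subadditive mu}) gives $e^{\varphi_{n_i}(y)} \geq D_1^{-1}\, e^{\varphi_{n_i}(T_v^{-1}\xi)}$ for a uniform $D_1>0$. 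Since every summand is positive, discarding all cylinders in $\alpha_{n_{i+j}}(a)$ outside the image of the concatenation map and then factorizing yields, for every $x$,
\[
\tilde{\phi}_{i+j}(x) \geq D\, \tilde{\phi}_i(x)\, \tilde{\phi}_j(x), \qquad D := D_1^{-1}.
\]

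Finally I would pass to the minimizers. Recall that $x_i$ minimizes $\phi_i$ and hence $\tilde{\phi}_i = P_{n_i}(1)\phi_i$, and similarly for $j$ and $i+j$; all three minimizers are finite because $m_{n_i},m_{n_j},m_{n_{i+j}}$ are non-degenerate by Lemma~\ref{Lemma:subseq}. Since $x_{i+j}$ minimizes the left-hand side and $\tilde{\phi}_i,\tilde{\phi}_j \geq 0$, I invoke the elementary fact that $\min_x(fg) \geq (\min_x f)(\min_x g)$ for nonnegative $f,g$:
\[
\tilde{\phi}_{i+j}(x_{i+j}) = \min_x \tilde{\phi}_{i+j}(x) \geq D\min_x\big(\tilde{\phi}_i(x)\tilde{\phi}_j(x)\big) \geq D\,\tilde{\phi}_i(x_i)\,\tilde{\phi}_j(x_j),
\]
which is exactly the claim. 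The main obstacle is precisely this last reduction: the pointwise inequality is super-multiplicative, yet the quantity of interest is evaluated at the minimizer of the left-hand side, so it is essential both that $x_{i+j}$ minimizes $\tilde{\phi}_{i+j}$ and that nonnegativity allows the minimum of a product to be bounded below by the product of the minima. The concatenation and distortion steps are otherwise routine adaptations of Lemma~\ref{lemma:subadditive mu}.
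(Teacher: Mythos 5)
Your proposal is correct and follows essentially the same route as the paper: a pointwise inequality $\tilde{\phi}_i(x)\tilde{\phi}_j(x)\le C^2\tilde{\phi}_{i+j}(x)$ obtained by concatenating cylinders $v\in\alpha_{n_i}(a)$, $w\in\alpha_{n_j}(a)$ and controlling $e^{\varphi_{n_i}(T_v^{-1}\xi)}$ against $e^{\varphi_{n_i}(T_v^{-1}T_w^{-1}\xi)}$ via the Gibbs distortion bound, followed by evaluation at the minimizers. Your final reduction $\min_x(fg)\ge(\min_x f)(\min_x g)$ for nonnegative functions is the same observation the paper uses implicitly when it passes from the pointwise bound to the bound at the $x_i$'s.
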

\begin{proof}
    Applying the Gibbs property \eqref{Eq: Gibbs prop}, we deduce that for every $i,j\in\mathbb{N}$, and $v\in \alpha_{n_i}(a)$, $w\in \alpha_{n_j}(a)$, it holds 
    $e^{\varphi_{n_i}(T_v^{-1}x)}\leq C^2 e^{\varphi_{n_i}(T_v^{-1}T_w^{-1}\xi)}$  and $e^{\varphi_{n_j}(T_w^{-1}x)}\leq C^2 e^{\varphi_{n_j}(T_w^{-1}\xi)}$, for each $x\in X$.
    Let $i,j\in \mathbb{N}$ and $x\in\mathbb{R}^k$. 
    We have that,
    \begin{align*}
        \tilde{\phi_i}(x)\tilde{\phi_j}(x)
\leq \;&C^2\sum_{m,t\in\mathbb{Z}^k} e^{\langle m+t, x\rangle} \sum_{\substack{v\in \alpha_{n_i}(a)\\\psi_{n_i}(v)=m}}\sum_{\substack{w\in \alpha_{n_j}(a)\\\psi_{n_j}(w)=t}}e^{\varphi_{n_i}(T^{-1}_vT^{-1}_w(\xi))}e^{\varphi_{n_j}(T^{-1}_w\xi)}\\
        \leq \;&C^2\sum_{m,t\in\mathbb{Z}^k} e^{\langle m+t, x\rangle} \sum_{\substack{v\in \alpha_{n_i}(a)\\\psi_{n_i}(v)=m}}\sum_{\substack{w\in \alpha_{n_j}(a)\\\psi_{n_j}(w)=t}}e^{\varphi_{n_i+n_j}(T^{-1}_vT^{-1}_w(\xi))}\\
        \leq \;&C^2\sum_{m,t\in\mathbb{Z}^k} e^{\langle m+t, x\rangle} \sum_{\substack{v\in \alpha_{n_i+n_j}(a)\\\psi_{n_i+n_j}(v)=m+t}}e^{\varphi_{n_i+n_j}(T^{-1}_v(\xi))}\\
        =&\; C^2 \widetilde{\phi}_{i+j}(x),
        \end{align*}
        From the previous expression, \eqref{eq: def phitilde} and the definition of the $x_{i}$'s, we obtain that $\tilde{\phi_i}(x_i)\tilde{\phi_j}(x_j)\leq C^2 \tilde{\phi}_{i+j}(x_{i+j})$.
        This concludes the proof.
\end{proof}

Using Lemmas \ref{lemma:Fekete},~\ref{Lemma:subseq} and \ref{lemma:subadditivephi} we obtain 

\begin{corollary}\label{cor:69}
The limit $\lim_{i\to\infty}\frac{1}{n_i}\log \phi_i(x_i)$ exists and is finite. 
\end{corollary}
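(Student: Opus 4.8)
The plan is to combine the two preceding corollaries into a single statement about the relevant subsequence. First I would recall that, by Corollary~\ref{corollary:existence limit}, the limit $\lim_{n\to\infty}\frac{1}{n}\log(\mu^n(e))$ exists and is finite; since the subsequence $(n_i) = (is)$ is a subsequence of $\N$, the sublimit $\lim_{i\to\infty}\frac{1}{n_i}\log(\mu^{n_i}(e))$ also exists and equals the same finite value. The task then reduces to relating $\phi_i(x_i)$ to a quantity whose normalized logarithmic limit I already control.

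The key step is to pass from $\phi_i$ to $\tilde{\phi}_i$ via the identity $\tilde{\phi}_i(x) = P_{n_i}(1)\,\phi_i(x)$ recorded in~\eqref{eq: def phitilde}. Taking logarithms and dividing by $n_i$, I would write
\begin{align*}
 \frac{1}{n_i}\log \phi_i(x_i) = \frac{1}{n_i}\log \tilde{\phi}_i(x_i) - \frac{1}{n_i}\log P_{n_i}(1).
\end{align*}
For the second term, recall that by~\cite[Proposition 3.2]{SaLN} we have $P_\varphi(T) = \lim_{n\to\infty}\frac{1}{n}\log P_n(1)$, so along the subsequence $\frac{1}{n_i}\log P_{n_i}(1)\to P_\varphi(T)$, which is finite. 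For the first term, Lemma~\ref{lemma:subadditivephi} shows that the sequence $a_i := \log \tilde{\phi}_i(x_i)$ satisfies the almost-superadditivity hypothesis~\eqref{eq:Feketeeq} with a single constant $D > 0$ (one checks $a_{i+j}\ge a_i + a_j + \log D$). Moreover, Lemma~\ref{Lemma:subseq} guarantees that each $m_{n_i}$ is non-degenerate, so that $\phi_i(x_i) > 0$ and hence $\tilde{\phi}_i(x_i) > 0$, which makes each $a_i$ a well-defined real number. Applying Lemma~\ref{lemma:Fekete} (Fekete with a constant) to $(a_i)$ yields that $\lim_{i\to\infty}\frac{a_i}{i}$ exists in $[-\infty,\infty]$; since $n_i = is$ is a fixed multiple of $i$, the limit $\lim_{i\to\infty}\frac{1}{n_i}\log\tilde{\phi}_i(x_i) = \frac{1}{s}\lim_{i\to\infty}\frac{a_i}{i}$ exists as well.

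Combining the two displayed limits gives existence of $\lim_{i\to\infty}\frac{1}{n_i}\log\phi_i(x_i)$. For finiteness I would argue that the limit cannot be $+\infty$: by~\eqref{eq:spectrumDS}, $\phi_i(x_i) = \lambda(G, m_{n_i})$ is the spectral radius of a probability measure on $G$, hence $\phi_i(x_i)\le 1$ and $\frac{1}{n_i}\log\phi_i(x_i)\le 0$; and it cannot be $-\infty$ because $\frac{1}{n_i}\log\tilde{\phi}_i(x_i)$ is bounded below (using $\tilde\phi_i(x_i) \ge \tilde\phi_i(0) = P_{n_i}(1) \ge $ a positive lower bound coming from $e\in\psi_{n_i}(X)$ for $i$ large and the Gibbs property) while $\frac{1}{n_i}\log P_{n_i}(1)$ converges to the finite number $P_\varphi(T)$. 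The main obstacle I anticipate is bookkeeping: ensuring the non-degeneracy from Lemma~\ref{Lemma:subseq} is genuinely what keeps every $a_i$ finite (so that Fekete applies), and verifying carefully that $\tilde{\phi}_i$ evaluated at its own minimizer $x_i$ — rather than at a fixed point — still produces a clean superadditive inequality; this is exactly the content of the last line of the proof of Lemma~\ref{lemma:subadditivephi}, where the inequality at a common $x$ is specialized using the defining minimality of the $x_i$, and I would lean on that step rather than re-derive it.
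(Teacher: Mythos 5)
Your argument for the existence of the limit is exactly the paper's: apply Lemma~\ref{lemma:Fekete} to $a_i=\log\tilde{\phi_i}(x_i)$ via the almost-superadditivity of Lemma~\ref{lemma:subadditivephi}, and then peel off $\frac{1}{n_i}\log P_{n_i}(1)\to P_\varphi(T)$ using the identity $\tilde{\phi_i}=P_{n_i}(1)\,\phi_i$ from~\eqref{eq: def phitilde}. (The appeal to Corollary~\ref{corollary:existence limit} in your opening paragraph is harmless but plays no role in the rest of the argument.)

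One step in your finiteness discussion is wrong as written: you assert $\tilde{\phi_i}(x_i)\ge\tilde{\phi_i}(0)$, but $x_i$ is by definition the \emph{minimizer} of $\phi_i$ (hence of $\tilde{\phi_i}$), so the inequality goes the other way and only reproduces an upper bound --- which you already have, correctly, from $\phi_i(x_i)=\lambda(G,m_{n_i})\le 1$ via~\eqref{eq:spectrumDS}. The lower bound (the limit is not $-\infty$) instead comes directly from the superadditivity you have already invoked: iterating $a_{i+j}\ge a_i+a_j+\log D$ gives $\frac{a_{ki}}{ki}\ge\frac{a_i+\log D}{i}$ for every $k$, so the Fekete limit is bounded below by $a_1+\log D$, which is finite because $\tilde{\phi}_1(x_1)=P_{n_1}(1)\,\phi_1(x_1)>0$ (here $P_{n_1}(1)>0$ and $\phi_1(x_1)=\lambda(G,m_{n_1})>0$ since $m_{n_1}$ is non-degenerate by Lemma~\ref{Lemma:subseq}). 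With that one repair your proof is complete; you are in fact more explicit about finiteness than the paper, whose own proof only records the existence of the two limits being combined.
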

\begin{proof}
Let $s\in \mathbb{N}$ be the number given by Lemma \ref{Lemma:subseq} and $n_i=is$, for every $i\in\mathbb{N}$.
    From Lemma \ref{lemma:subadditivephi} and Lemma \ref{lemma:Fekete}, applied to the sequence $a_i=\log \tilde{\phi_i}(x_i)$, we obtain  that $\lim_{i\to\infty}\frac{1}{n_i}\log\tilde{\phi_i}(x_i)=\frac{1}{s}\lim_{i\to\infty}\frac{1}{i}\log\tilde{\phi_i}(x_i)$ exists.
    As $\tilde{\phi_i}(x_i)=P_{n_i}(1)\phi_i(x_i)$, we have that
\begin{align*}
    \frac{1}{n_i}\log\tilde{\phi_i}(x_i)=\frac{1}{n_i}\log P_{n_i}(1)+\frac{1}{n_i}\phi_i(x_i).
\end{align*}
Since $\lim_{i\to\infty}\frac{1}{n_i}\log P_{n_i}(1)=P_G(T)$ exists, the proof concludes.
\end{proof}

Using   Lemma~\ref{Lemma:subseq} and Corollary~\ref{cor:69}, we complete
\begin{proof}[Proof of Proposition \ref{Prop:T-and-ext}]
    The transitivity of the extension $T_\psi$ implies the existence of an element $v\in \alpha_l(a)$, for some $l\in\mathbb{N}$, so that $\psi_l(v)=e$.
    For every $w_1,\dots, w_k\in \alpha_n(a)$, it holds that $w^*=\bigcap_{j=1}^k T^{-(j-1)(n+l)}(w_j)\cap\bigcap_{j=1}^kT^{-(j-1)(n+l)-n}(v)$ belongs to $\alpha_{k(n+l)}(a)$.
    Notice that using twice the Gibbs property \eqref{Eq: Gibbs prop}, we have that $C^{-2}e^{\varphi_s(z)}\leq e^{\varphi_s(y)}$ for each $z,y\in v$, $v\in\alpha_s$ and $s\in \mathbb{N}$.
    For each $x\in w^*$, note that $T^{(j-1)(n+l)}(x)\in w_j$ and $T^{(j-1)(n+l)+n}(x)\in v$,  for all $j\in \{1,\ldots, k\}$. 
    Therefore,
    \begin{align*}
      e^{\varphi_{k(n+l)}(x)}&=\prod_{j=1}^{k}e^{\varphi_{j}(T^{(j-1)(n+l)}(x))}\prod_{j=1}^{k}e^{\varphi_{j}(T^{(j-1)(n+l)+n}(x))}\\
      &\geq \left[C^{-2k}\prod_{i=1}^k e^{\varphi_n(T^{-1}_{w_i}\xi)}\right]\left[C^{-2} e^{\varphi_l(T^{-1}_{v}\xi)}\right]^k.
    \end{align*}
    On the other hand, for every $k,n\in\mathbb{N}$, it holds
    \begin{align}
        \nonumber\langle \bm{1}_e,P_n^k(\bm{1}_e)\rangle=&\sum_{\substack{w_1,\dots, w_k\in \alpha_n(a)\\ \psi(w_1)\psi(w_2)\cdots\psi(w_k)=e}}e^{\varphi_n(T^{-1}_{w_1}(\xi))+\dots+\varphi_n(T^{-1}_{w_k}(\xi))}\\
        \label{eq:inner}=& \ {(P_n(1))^k}m_n^{*k}(e).
    \end{align}
    Hence,
    \begin{align*}
      \left[C^{-4} e^{\varphi_l(T^{-1}_{v}\xi)}\right]^k  \langle \bm{1}_e,P_n^k(\bm{1}_e)\rangle\leq \sum_{\substack{T^{k(n+l)}(x,e)=(x,e)\\ x\in a}}e^{\varphi_{k(n+l)}(x)}=Z_{a,e}^{k(n+l)}.
    \end{align*}
    Consequently,
    \begin{align*}
      \log\left[C^{-4} e^{\varphi_l(T^{-1}_{v}\xi)}\right]+\log(P_n(1))+\limsup_{k\to\infty}\frac{1}{k} \log \big\langle \bm{1}_e,\left(\dfrac{P_n}{P_n(1)}\right)^k(\bm{1}_e)\big\rangle\leq \limsup_{k\to\infty}\frac{1}{k}\log Z_{a,e}^{k(n+l)}.
    \end{align*}
     Equation \eqref{eq:inner} implies
     \begin{align*}
      \limsup_{k\to\infty}\frac{1}{k}  \log\big\langle \bm{1}_e,\left(\dfrac{P_n}{P_n(1)}\right)^k(\bm{1}_e)\big\rangle=\log\lambda(G,m_n).   
     \end{align*}
     Therefore, for the subsequence $(n_i)$ given by $n_i=is$, with $s\in \mathbb{N}$ the element given by Lemma \ref{Lemma:subseq}, we obtain that
      \begin{align}\label{eq:1-radius}
      P_\varphi(T)+\lim_{i\to\infty}\frac{1}{n_i}\log \lambda(G,m_{n_i})\leq \lim_{i\to\infty}\limsup_{k\to\infty}\frac{1}{n_ik}\log Z_{a,e}^{k(n_i+l)}=P_\varphi(T_\psi).
    \end{align}

On the other hand, note that
\begin{align*}
    \phi_i(x_i)=\sum_{m\in\mathbb{Z}^k}e^{\langle m,x_i\rangle}\left[\frac{1}{P_{n_i}(1)}\sum_{\substack{v\in \alpha_{n_i}(a)\\\overline{\psi}_{n_i}(v)=m}}e^{\varphi_{n_i}(T^{-1}_v\xi)}\right]&\geq \frac{1}{P_{n_i}(1)}\sum_{\substack{v\in \alpha_{n_i}(a)\\\overline{\psi}_{n_i}(v)=0}}e^{\varphi_{n_i}(T^{-1}_v\xi)}\\
    &\geq C^{-2}\frac{1}{P_{n_i}(1)}\sum_{\substack{T^{n_i}(x)=x\\\overline{\psi}_{n_i}(x)=0\\x\in v}}e^{\varphi_{n_i}(x)},
\end{align*}
for some $v\in \alpha_{n_i}(a)$. 
By Corollary~\ref{cor:69}, $\lim_{i\to\infty}\frac{1}{n_i}\log \phi_i(x_i)$ 
exists and is finite. 
Thus, taking the logarithm and the limit in the previous displayed equation,
\begin{align*}
    \lim_{i\to\infty}\frac{1}{n_i}\log\phi_i(x_i)\geq -P_\varphi(T)+P_\varphi({T}_\psi^{\rm ab}).
\end{align*}
Using \eqref{eq:spectrumDS}, we conclude that
\begin{align}\label{eq:2-radius}
    \lim_{i\to\infty}\frac{1}{n_i}\log\lambda(G,m_{n_i})+P_\varphi(T)\geq P_\varphi(T_\psi^{\rm ab}), 
\end{align}
Combining \eqref{eq:1-radius}  and \eqref{eq:2-radius}, we obtain that
\begin{align}\label{eq:radius-final}
    P_\varphi(T_\psi^{\rm ab})\leq \lim_{i\to\infty}\frac{1}{n_i}\log\lambda(G,m_{n_i})+P_\varphi(T)\leq P_\varphi(T_\psi).
\end{align}
As $P_\varphi(T_\psi)\leq P_\varphi(T_\psi^{\rm ab})$ is always true, we obtain \eqref{eq: equal-abelian}.
The second part of the proposition follows directly by \eqref{eq: equal-abelian} and \eqref{eq:radius-final}.
\end{proof}


\begin{thebibliography}{MMM}

\bibitem{Aaronson}
J.~Aaronson. \emph{{An Introduction to Infinite Ergodic Theory}}. Math.\ Surveys
  and Monographs \textbf{50}, Amer.\ Math.\ Soc., 1997.
  
  \bibitem{AD01} J.~Aaronson, M.~Denker.
{Local limit theorems for partial sums of stationary sequences generated by Gibbs-Markov maps.} 
\emph{Stoch.\ Dyn.} \textbf{1} (2001) 193--237.

\bibitem{AD02} J.~Aaronson, M.~Denker.
 {Group extensions of Gibbs-Markov maps} \emph{Prob. Th. Rel. Fields}, \textbf{123} (2002), 38--40.
 
 \bibitem{Av}
 A. Avez. {Limite de quotients pour les marches al{\`e}atoires sur des groupes.} \emph{C.R. Acad. Sci.
Paris, Serie A}, \textbf{276} (1973) 317--320.
 
 \bibitem{BaHa19} B. Bekka and P. de la Harpe. {\it Unitary representations of groups, duals, and characters}, Math.\ Surveys
  and Monographs \textbf{250}, Amer.\ Math.\ Soc., 2020.
 
%  \bibitem{Di77} J. Dixmier, {\it $C\sp*$-algebras}, translated from the French by Francis Jellett, 
% North-Holland Mathematical Library, Vol. 15, North-Holland, Amsterdam-New York-Oxford, 1977.

\bibitem{Dolgopyat02}
D.~Dolgopyat. {On mixing properties of compact group extensions of hyperbolic
systems}. \emph{Israel J. Math.} \textbf{130} (2002) 157--205.

 \bibitem{DoSh21} R.~Dougall and R.~Sharp. Anosov flows, growth rates on covers and group extensions of subshifts, \emph{Invent. Math.} {\bf 223} (2021), no.~2, 445--483.
  
\bibitem{DoSh24} R.~Dougall, R.~Sharp.
A non-symmetric Kesten criterion and ratio limit theorem for random walks on amenable groups.
\emph{Int. Math. Res. Not.} IMRN 2024, no. 7, 6209–6223.


\bibitem{Fol}
G.~Folland. \emph{A Course in Abstract Harmonic Analysis}, second edition,
Textbooks in Mathematics, CRC Press, Boca Raton, FL, 2016.

\bibitem{Gerl78}
P.~Gerl. Wahrscheinlichkeitsmasse auf diskreten Gruppen. \emph{Arch. Math. (Basel)} 31 (1978): 611–9


\bibitem{HR}
 E.~Hewitt, K. A.~Ross. \emph{Abstract Harmonic Analysis.} Vol. 1.
 Springer-Verlag, Berlin-Göttingen-Heidelberg, 1963.

\bibitem{Ja15}
  J. Jaerisch. {Group-extended Markov systems, amenability, and the Perron-Frobenius
  operator.} \emph{Proc. Am. Math. Soc.}\textbf{143} (2015) 289--300.
% 
% \bibitem{JaRoSt24}Johannes Jaerisch, Elaine Rocha, Manuel Stadlbauer, Amenable graphs and the spectral radius of extensions of Markov maps 	arXiv:2404.08270.

\bibitem{KaVe83} V.~A. Kaimanovich and A.~M. Vershik. Random walks on discrete groups: boundary and entropy, \emph{Ann. Probab.} {\bf 11} (1983), no.~3, 457--490.

\bibitem{Katzn} Y.~Katznelson. 
\emph{{An Introduction to Harmonic Analysis}}. 
Dover, New York, 1976.

\bibitem{Ke59} H. Kesten. {Full Banach mean values on countable groups}, \emph{Math. Scand.} {\bf 7} (1959), 146--156.

\bibitem{MelTer17} I.\ Melbourne, D.\ Terhesiu. 
{Operator renewal theory for continuous time dynamical systems with finite and infinite measure}. 
\emph{Monatsh.\ Math.}\  \textbf{182} (2017) 377--431.

\bibitem{MTjmd}I.\ Melbourne, D.\ Terhesiu. 
{Mixing properties for toral extensions of slowly mixing dynamical systems with finite and infinite measure.} \emph{J. of Modern Dyn.}\textbf{12} (2018)  285--313. 

\bibitem{MuT}
J.~Munkres. \emph{Topology} (2nd ed.). Upper Saddle River, NJ: Prentice Hall, 2000.

\bibitem{Sa03} O.~M. Sarig. Existence of Gibbs measures for countable Markov shifts, \emph{Proc. Amer. Math. Soc.} {\bf 131} (2003), no.~6, 1751--1758.

\bibitem{SaLN} O. M. Sarig. Lecture Notes on
Thermodynamic Formalism
for Topological Markov Shifts, 2009.


\bibitem{Ma13} M. Stadlbauer. {An extension of Kesten's criterion for amenability to topological Markov chains}, \emph{Adv. Math.} {\bf 235} (2013), 450--468.



\bibitem{St67}
C. Stone. {Ratio limit theorems for random walks on groups.} \emph{Trans. Amer. Math. Soc.} \textbf{125} (1966)
86--100.

\bibitem{Woess22}
W.\ Woess.  “Some old and basic facts about random walks on groups.” (2022): arXiv:2209.00319.

\bibitem{young}
L.-S.\ Young. Recurrence times and rates of mixing,
\emph{Israel Journ.\ Math.}\   110 (1999), 153–188.
\end{thebibliography}
\end{document}